\definecolor{dmagenta}{rgb}{.4,.1,.5}
\definecolor{dblue}{rgb}{.0,.0,.5}
\definecolor{mblue}{rgb}{.0,.0,.8}
\definecolor{ddblue}{rgb}{.0,.0,.4}
\definecolor{dred}{rgb}{.6,.0,.0}
\definecolor{dgreen}{rgb}{.0,.5,.0}
\definecolor{Eeom}{rgb}{.0,.0,.5}
\newtheorem{lemma}{Lemma}[section]
\newtheorem{theorem}{Theorem}[section]
\newtheorem{corollary}{Corollary}[section]
\theoremstyle{definition}
\theoremstyle{remark}
\newtheorem{remark}{Remark}[section]
\numberwithin{equation}{section}
\crefname{section}{Section}{Sections}
\crefname{subsection}{Subsection}{Subsections}
\crefname{condition}{Condition}{Conditions}
\crefname{hypothesis}{Hypothesis}{Conditions}
\crefname{assumption}{Assumption}{Assumptions}
\crefname{lemma}{Lemma}{Lemmas}
\crefname{claim}{Claim}{Claims}
\Crefname{figure}{Figure}{Figures}
\DeclareRobustCommand\widecheck[1]{{\mathpalette\@widecheck{#1}}}
\def\@widecheck#1#2{%
    \setbox\z@\hbox{\m@th$#1#2$}%
    \setbox\tw@\hbox{\m@th$#1%
       \widehat{%
          \vrule\@width\z@\@height\ht\z@
          \vrule\@height\z@\@width\wd\z@}$}%
    \dp\tw@-\ht\z@
    \@tempdima\ht\z@ \advance\@tempdima2\ht\tw@ \divide\@tempdima\thr@@
    \setbox\tw@\hbox{%
       \raise\@tempdima\hbox{\scalebox{1}[-1]{\lower\@tempdima\box
\tw@}}}%
    {\ooalign{\box\tw@ \cr \box\z@}}}
\DeclareMathOperator{\Prob}{\mathbb{P}} 
\newcommand{\D}{\mathrm{d}}          
\newcommand{\RR}{\mathbb{R}}         
\newcommand{\Rd}{{\mathbb{R}^d}}       
\newcommand{\sorder}{{\mathfrak{o}}} 
\newcommand{\cL}{\mathcal{L}}        
\newcommand{\cB}{\mathcal{B}}    
\newcommand{\sB}{\mathscr{B}}    
\newcommand{\cC}{\mathcal{C}}     
\newcommand{\cD}{\Omega}     
\newcommand{\sF}{\mathfrak{F}}    
\newcommand{\sG}{\mathscr{G}}    
\newcommand{\bI}{\mathbf{I}}
\newcommand{\cI}{\mathcal{I}}    
\newcommand{\cK}{\mathcal{K}}     
\newcommand{\sM}{\mathscr{M}}     
\newcommand{\sP}{\mathscr{P}}    
\newcommand{\cS}{\mathcal{S}}   
\newcommand{\cT}{\mathcal{T}}    
\newcommand{\cX}{\mathcal{X}}    
\newcommand{\abs}[1]{\lvert#1\rvert}
\newcommand{\norm}[1]{\lVert#1\rVert}
\DeclareMathOperator{\dist}{dist}
\DeclareMathOperator{\Deg}{\mathrm{deg}}
\DeclareMathOperator{\rin}{r_{\mathrm{in}}}
\DeclareMathOperator{\rout}{r_{\mathrm{out}}}
\begin{document}

\title[Principal eigenvalue of integro-differential operators]%
{Principal eigenvalues of a class of nonlinear integro-differential operators}

\author[Anup Biswas]{Anup Biswas$^\dag$}
\address{$^\dag$ Department of Mathematics,
Indian Institute of Science Education and Research,
Dr. Homi Bhabha Road, Pune 411008, India}
\email{anup@iiserpune.ac.in}

\date{}

\begin{abstract}
We consider a class of nonlinear integro-differential operators and prove existence of two principal (half) eigenvalues in bounded smooth domains with 
exterior Dirichlet condition. We then establish simplicity of the principal eigenfunctions in viscosity sense,
 maximum principles, continuity property of the principal eigenvalues with respect to domains etc. We also prove
an anti-maximum principle and study existence result for some nonlinear problem via Rabinowitz bifurcation-type results.
\end{abstract}

\keywords{Principal eigenvalue, fractional Laplacian, nonlocal operators,  ground state, anti-maximum principle, Dirichlet problem}

\subjclass[2000]{Primary. 35P30, 60N25 Secondary. 35J60}

\maketitle

\section{Introduction and main results}

Given a smooth bounded domain $\cD$ we consider the nonlinear integro-differential equation
$$\bI u \;=\; f \quad \text{in}\; \cD, \quad \text{and}\quad u\;=\; 0\quad \text{in}\; \cD^c,$$
where $\bI$ is suprema of linear operators in $\cL_*$ (see \eqref{E1.1} and \eqref{Assmp1} below) with nonsmooth coefficients.
The main theme of this paper is to study the associated eigenvalue problems, existence and uniqueness results, maximum principles,
anti-maximum principles etc. We also study existence result for certain nonlinear problems involving nonlocal Pucci's operators via Rabinowitz bifurcation-type results.

 There is a well established theory for eigenvalue problems in the
literature of local partial differential equations. See for instance, \cite{A08, B77, BEQ, BNV, FQ, L83, QS08} and references therein. Eigenvalue
problems for the above type of nonlinear elliptic operators were first considered in \cite{P66} where existence of two principal eigenvalues
were shown. These eigenvalues are also referred to as \textit{half-eigenvalues} or \textit{demi-eigenvalues}. In the celebrated work \cite{BNV} an interesting
connection between (refined) maximum principle and principal eigenvalues of linear elliptic operators were established. For eigenvalue problems of
fully nonlinear elliptic operators we refer \cite{A08, FQ, IY, L83, QS08}. Surprisingly, there are not many existing works on eigenvalue problems
involving stable like operators. In \cite{BCH,C10, KMPZ} eigenvalue problems are studied for a class of nonlocal linear operators. The nonlocal kernels
appeared in \cite{BCH,C10, KMPZ} are different from ours. Recently in \cite{BL17a, BL17b}, the authors consider eigenvalue problems and 
study related maximum principles
for a large class of linear nonlocal schr\"{o}dinger operators arising from subordinate Brownian motions. The central object in the analysis of
\cite{BL17a, BL17b} is the Feynman-Kac representation of the eigenfunctions. Another recent work \cite{QSX} considers the eigenvalue problem for nonlinear 
integro-differential operators with the drift term and establish existence of principal eigenvalues.

During last decade there has been substantial research devoted to the
development of the regularity theory of nonlinear integro-differential operators \cite{CS09, CS11, K13, RS16, RS17, S15}. These results
are main ingredients to apply the nonlinear Krein-Rutman theorem which is used below to find the principal eigenvalues of our
nonlinear operator. We also provide characterizations of principal eigenvalues and eigenfunctions using viscosity solutions. In fact, we are able to
produce most of the results of \cite{BEQ} which studies eigenvalue problems for (local) Pucci's operators. 
We remark that eigenvalue problem is an important tool in the study of solutions at resonance, Ladezman-Lazer type results and Ambrosetti- Prodi phenomenon.

Rest of the article is organized as follows. In the  next section we describe our model and the basic assumptions. Section~\ref{S-main} describes
the main results of this article and Section~\ref{S-moti} supplies a motivation for considering these class of integro-differential operators.
In Section~\ref{S-prelim} we gather some preliminary results, mostly from literature and then in Section~\ref{S-proofs} we provide
the proofs of our main results.

\subsection{Assumptions on the model}

The ellipticity class is defined with respect to a class of operators $\cL$ containing operator $L$ of the form
\begin{equation}\label{E1.1}
L u(x) = \frac{1}{2}\int_{\Rd}\bigl(u(x+y)+u(x-y)-2u(x)\bigr) \frac{k(y/\abs{y})}{\abs{y}^{d+2s}}\, \D{y},\quad s\in(0, 1)\;  \text{fixed},
\end{equation}
where for some fixed $\lambda, \Lambda, 0<\lambda\leq\Lambda$, it holds that
\begin{equation}\label{Assmp1}
\lambda\leq k\leq \Lambda, \quad \text{and}\quad k(y)=k(-y). \tag{A}
\end{equation}
Therefore, $k\in L^\infty(S^{d-1})$. We would be interested in the operator
\begin{equation}\label{Opt}
\bI u = \sup_{L\in\cL} \, Lu.
\end{equation}
Let $\cL_*$ be the class of all operators satisfying \eqref{Assmp1}. Trivially, $\cL\subset\cL_*$. The maximal and minimal operator with respect to $\cL_*$ is defined as 
follows.
$$\sM^+_* u =\sup_{L\in \cL_*}\, L u, \quad \text{and}, \quad \sM^-_* u= \inf_{L\in\cL_*}\, Lu.$$
Let us also introduce a larger class of operators $\cL_0$. $\cL_0$ contains operators of the form \eqref{E1.1} where the kernels are given by functions of the form $k(y) \abs{y}^{-d-2s}$ satisfying
$k(y)=k(-y)$ and $\lambda\leq k\leq \Lambda$. The extremal operators corresponding to $\cL_0$ are given by
$$\sM^+_0 u = \; \sup_{L\in\cL_0} Lu, \quad \text{and}, \quad \sM^-_0 u = \; \inf_{L\in\cL_0}\; Lu.$$
It is helpful to notice that $\sM^-_0\leq \sM^-_*\leq \bI\leq \sM^+_*\leq \sM^+_0$. Therefore, if a operator is elliptic with respect to the class $\cL$,
 in the sense of \cite[p.~603]{CS09}, it also
elliptic with respect to the class $\cL_*$ and $\cL_0$.

Let $\cD$ be a  bounded domain with smooth boundary.
We study the Dirichlet eigenvalue problem for the nonlinear operator $\bI$ in $\cD$. To do so we apply the nonlinear
Krein-Rutman theorem and therefore, it is important to know the \textit{exact} boundary behaviour of the solutions, vanishing on $\cD^c$, near the boundary. It is recently shown in 
\cite[Section~2.1]{RS16} that in general, it is not possible to have a fine boundary regularity for the operators elliptic with respect to the class $\cL_0$. 
Therefore, we restrict the ellipticity class to subsets of $\cL_*$. 


Let us now define the Dirichlet principal values for the operator $\bI$. By a (sub or super) solution  we always mean viscosity (sub or super, resp.) solution . See for instance, \cite{CS09} for
definition and properties of viscosity solutions of nonlocal operators. Let $\omega(y)=(1+\abs{y})^{-d-2s}$ and $L^1(\omega)$ be the set of all integrable functions
with respect to the weight function $\omega$.
For any $\mu\in\RR$ we define
\begin{align*}
\sF^+(\cD, \mu) &=\{ \psi\in\cC(\Rd)\cap L^1(\omega)\; :\; \psi\geq 0,\; \psi>0 \; \text{in}\; \cD, \; \bI \psi + \mu\psi \leq 0 \; \text{in}\; \cD\},
\\
\sF^-(\cD, \mu) &=\{ \psi\in\cC(\Rd)\cap L^1(\omega)\; :\; \psi\leq 0,\; \psi<0 \; \text{in}\; \cD, \; \bI \psi + \mu\psi \geq 0 \; \text{in}\; \cD\}.
\end{align*}
By $\cC^{2s+}(\cD)$ we denote the class of all continuous functions in $\Rd$ with the property that for any $f\in\cC^{2s+}(\cD)$ and any compact $\cK\subset\cD$ there 
exists $\alpha>0$ such that $f\in \cC^{2s+\alpha}(\cK)$. We also define
\begin{align*}
\widehat\sF^+(\cD, \mu) &=\{ \psi\in\cC(\Rd)\cap\cC^{2s+}(\cD)\cap L^1(\omega)\; :\; \psi\geq 0,\; \psi>0 \; \text{in}\; \cD, \; \bI \psi + \mu\psi \leq 0 \; \text{in}\; \cD\},
\\
\widehat\sF^-(\cD, \mu) &=\{ \psi\in\cC(\Rd)\cap\cC^{2s+}(\cD)\cap L^1(\omega)\; :\; \psi\leq 0,\; \psi<0 \; \text{in}\; \cD, \; \bI \psi + \mu\psi \geq 0 \; \text{in}\; \cD\}.
\end{align*}
Let
\begin{equation*}
\begin{aligned}
\Lambda^+=\Lambda^+(\cD)=\sup\{\mu \; :\; \sF^+(\cD, \mu)\neq\emptyset\}, & \quad  
\widehat\Lambda^+=\widehat\Lambda^+(\cD)=\sup\{\mu \; :\; \widehat\sF^+(\cD, \mu)\neq\emptyset\},
\\
\Lambda^-=\Lambda^-(\cD)=\sup\{\mu \; :\; \sF^-(\cD, \mu)\neq\emptyset\}, & \quad  
\widehat\Lambda^-=\widehat\Lambda^-(\cD)=\sup\{\mu \; :\; \widehat\sF^-(\cD, \mu)\neq\emptyset\}.
\end{aligned}
\end{equation*}
It is obvious that $0\leq \widehat\Lambda^+\leq \Lambda^+$ and $0\leq \widehat\Lambda^-\leq \Lambda^-$. Again from convexity it follows that
$\Lambda^+\leq \Lambda^-$ and $\widehat\Lambda^+\leq \widehat\Lambda^-$. In what follows, we shall use the notation $\Lambda^+, \Lambda^-$ 
(instead of $\Lambda^+(\cD), \Lambda^-(\cD)$) to indicate the principal eigenvalues. 

\begin{remark}
From the proof of Theorem~\ref{T2.4} it is easily seen that for $\lambda\geq 0$
\begin{align*}
\sF^+(\cD, \lambda) &=\{ \psi\in\cC(\Rd)\; :\; \psi\geq 0,\; \psi\gneq0 \; \text{in}\; \cD, \; \bI \psi + \lambda\psi \leq 0 \; \text{in}\; \cD\},
\\
\sF^-(\cD, \lambda) &=\{ \psi\in\cC(\Rd)\; :\; \psi\leq 0,\; \psi\lneq 0 \; \text{in}\; \cD, \; \bI \psi + \lambda\psi \geq 0 \; \text{in}\; \cD\},
\end{align*}
and therefore, we have
\begin{align*}
\Lambda^+ &=\sup\{\lambda\; :\;  \psi\in\cC(\Rd)\cap L^1(\omega)\;, \psi\geq 0,\; \psi\gneq 0 \; \text{in}\; \cD, \; \bI \psi + \lambda\psi \leq 0 \; \text{in}\; \cD\},
\\
\Lambda^- &=\sup\{\lambda\; :\; \psi\in\cC(\Rd)\cap L^1(\omega)\; ,\psi\leq 0,\; \psi\lneq 0 \; \text{in}\; \cD, \; \bI \psi + \lambda\psi \geq 0 \; \text{in}\; \cD\}.
\end{align*}
\end{remark}

\subsection{Statement of the main results}\label{S-main}
We now state the main results of this paper and the proofs of these results can be found in Section ~\ref{S-proofs}.
Our first result gives existence and uniqueness of eigenpairs. By $\cC_0(\cD)$ we denote the set of all continuous functions
in $\Rd$ that vanish on $\cD^c$.

\begin{theorem}\label{T1.1}
There exists functions $\Psi^+, \Psi^-\in\cC^{2s+}(\cD)\cap\cC_0(\cD)$ such that
\[\left\{ \begin{array}{llll}
\bI \Psi^+ = - \Lambda^+\Psi & \text{in}\; \cD,
\\[2mm]
\Psi^+ > 0 & \text{in}\; \cD,
\\[2mm]
\Psi^+=0 & \text{in}\; \cD^c,
\end{array}
\right.
\quad and \quad
\left\{ \begin{array}{llll}
\bI \Psi^- = - \Lambda^-\Psi & \text{in}\; \cD,
\\[2mm]
\Psi^- < 0 & \text{in}\; \cD,
\\[2mm]
\Psi^- = 0 & \text{in}\; \cD^c,
\end{array}
\right.
\]
where $\Lambda^{\pm}$ are positive. Moreover, $\Lambda^+$ (resp. $\Lambda^-$)
is the only eigenvalue corresponding to a positive (resp. negative) eigenfunction. Also both the eigenvalues
are geometrically simple.
\end{theorem}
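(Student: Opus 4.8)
The plan is to obtain the two principal eigenpairs through the nonlinear Krein--Rutman theorem applied to the Dirichlet solution operator of $\bI$, and then to match the abstract eigenvalues with $\Lambda^\pm$. First one records the structural facts that $\bI$ is convex, positively $1$-homogeneous and degenerate elliptic of order $2s$ with $\sM^-_*\le\bI\le\sM^+_*$, so the comparison principle, Perron's method, the interior $\cC^{2s+\alpha}$ estimates and --- decisively --- the fine boundary behaviour $\asymp d^s$ (with $d\df\dist(\cdot,\cD^c)$), available precisely because the ellipticity class sits inside $\cL_*$ and not merely $\cL_0$, all apply to $\bI$ (these are recalled in Section~\ref{S-prelim}). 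Consequently, for each $g\in\cC(\overline\cD)$ with $g\ge0$ there is a unique viscosity solution $\cT g\in\cC^{2s+}(\cD)\cap\cC_0(\cD)$ of $-\bI u=g$ in $\cD$, $u=0$ in $\cD^c$, and the map $\cT$ is order preserving (comparison), positively $1$-homogeneous, compact on $\cC_0(\overline\cD)$ (interior $\cC^{2s+\alpha}$ and boundary $\cC^s$ bounds together with Arzel\`a--Ascoli), and strongly positive: $g\gneq0$ forces $\cT g>0$ in $\cD$ with $\cT g\asymp d^s$ near $\partial\cD$, by the strong maximum principle and a nonlocal Hopf lemma. In particular $\cT$ maps $\cK\setminus\{0\}$ into the interior of $\cK$ for the $d^s$-weighted order.

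Next I would apply the nonlinear Krein--Rutman theorem for compact, order preserving, positively $1$-homogeneous, strongly positive maps to $\cT$: it yields the spectral radius $r>0$ together with $\Psi^+\in\cK\setminus\{0\}$, $\Psi^+>0$ in $\cD$, with $\cT\Psi^+=r\Psi^+$; this is exactly $\bI\Psi^++r^{-1}\Psi^+=0$ in $\cD$, $\Psi^+=0$ in $\cD^c$, and $\Psi^+=r^{-1}\cT\Psi^+$ inherits membership in $\cC^{2s+}(\cD)\cap\cC_0(\cD)$ as well as the $d^s$ boundary behaviour. The same theorem moreover records that $r$ is the \emph{only} eigenvalue of $\cT$ admitting a positive eigenfunction. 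For the negative eigenpair I would run the identical argument for the minimal operator $\widecheck{\bI}u\df\inf_{L\in\cL}Lu=-\bI(-u)$, which is concave, positively $1$-homogeneous and again lies between $\sM^-_*$ and $\sM^+_*$: it produces $r'>0$ and $\phi>0$ in $\cD$ with $\widecheck{\bI}\phi+(r')^{-1}\phi=0$, and then $\Psi^-\df-\phi<0$ in $\cD$ solves $\bI\Psi^-=-\widecheck{\bI}\phi=-(r')^{-1}\Psi^-$.

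It remains to identify $r^{-1},(r')^{-1}$ with $\Lambda^\pm$ and to prove uniqueness. One inequality is free: $\Psi^+\in\widehat\sF^+(\cD,r^{-1})\subset\sF^+(\cD,r^{-1})$, so $r^{-1}\le\widehat\Lambda^+\le\Lambda^+$. For the converse, given $\psi\in\sF^+(\cD,\mu)$ with $\mu>0$, note that $\psi$ is a viscosity supersolution of $-\bI u=\mu\psi$ with $\psi\ge0=\cT(\mu\psi)$ on $\cD^c$, so comparison gives $\psi\ge\cT(\mu\psi)=\mu\cT\psi$, and iterating (using order preservation and homogeneity) gives $\psi\ge\mu^{n}\cT^{\,n}\psi$ for all $n$; since strong positivity yields $c>0$ with $\cT\psi\ge c\Psi^+$, hence $\cT^{\,n}\psi\ge c\,r^{\,n-1}\Psi^+$, we obtain $\psi\ge(c/r)(\mu r)^n\Psi^+$ pointwise in $\cD$, which forces $\mu r\le 1$. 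This Collatz--Wielandt-type estimate gives $\Lambda^+=\widehat\Lambda^+=r^{-1}>0$, and symmetrically $\Lambda^-=\widehat\Lambda^-=(r')^{-1}>0$. For uniqueness, if $u$ is any positive eigenfunction, $\bI u+\mu u=0$ in $\cD$, $u>0$ in $\cD$, $u\in\cC_0(\cD)$, then first $\mu>0$ (otherwise $u=\cT(\mu u)\le\cT(0)=0$), and then $u=\cT(\mu u)=\mu\cT u$, i.e.\ $u$ is a positive eigenfunction of $\cT$ with eigenvalue $\mu^{-1}$; by the Krein--Rutman uniqueness $\mu^{-1}=r$, so $\mu=\Lambda^+$ --- hence $\Lambda^+$ is the only eigenvalue with a positive eigenfunction, and likewise $\Lambda^-$ with a negative one.

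For geometric simplicity of $\Lambda^+$, take two positive eigenfunctions $\Psi^+$ and $\tilde\Psi$ --- both, by the above, with eigenvalue $\Lambda^+$ and both $\asymp d^s$ by the regularity theory --- so that $\gamma\df\sup\{t\ge0:\tilde\Psi\ge t\Psi^+\text{ in }\Rd\}\in(0,\infty)$; put $v\df\tilde\Psi-\gamma\Psi^+\ge0$. Using that $\Psi^+$ is a classical solution in $\cD$ and selecting at each interior point a kernel attaining the supremum defining $\bI\Psi^+$, a routine viscosity computation shows that $v$ is a supersolution of $\widecheck{\bI}v+\Lambda^+v\le0$ in $\cD$. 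If $v$ vanished at an interior point $x_0$, then (since $v\in\cC^{2s+\alpha}$ there, $v\ge0$, $v(x_0)=0$) $0\le\widecheck{\bI}v(x_0)\le-\Lambda^+v(x_0)=0$, which forces $\int_{\Rd}\bigl(v(x_0+y)+v(x_0-y)\bigr)\abs{y}^{-d-2s}\,\D y=0$ and hence $v\equiv0$; otherwise $v>0$ in $\cD$, and a nonlocal Hopf lemma gives $v\ge\varepsilon\Psi^+$ for some $\varepsilon>0$, contradicting the maximality of $\gamma$ (the case where $\tilde\Psi/\Psi^+$ attains its infimum only on $\partial\cD$ being settled by comparing the boundary traces $\tilde\Psi/d^s,\Psi^+/d^s\in\cC(\overline\cD)$ and a boundary Hopf principle). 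Thus $\tilde\Psi=\gamma\Psi^+$, and the statement for $\Psi^-$ follows verbatim with $\widecheck{\bI}$ in place of $\bI$. The main obstacle I anticipate is twofold: verifying the hypotheses of the nonlinear Krein--Rutman theorem for $\cT$ --- compactness and, above all, strong positivity --- which relies entirely on the nonlocal regularity theory \emph{up to the boundary} and is exactly the reason the ellipticity class must be shrunk from $\cL_0$ to a subset of $\cL_*$; and, in the last step, making the comparison arguments rigorous within the viscosity framework, in particular deriving the one-sided inequality $\widecheck{\bI}v+\Lambda^+v\le0$ for the difference of two eigenfunctions (which uses both the interior regularity of $\Psi^+$ and the $\sup$-structure of $\bI$) and handling touching points on $\partial\cD$.
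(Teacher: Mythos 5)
Your proposal is correct and its skeleton is the one the paper actually uses: the nonlinear Krein--Rutman theorem applied to the Dirichlet solution operator $\cT$ of $\bI$ (and, for the negative eigenpair, to the solution operator of the concave operator $\inf_{L\in\cL}Lu=-\bI(-u)$, which is exactly the paper's $\tilde\cT f=-\cT(-f)$), with strong positivity coming from the Hopf lemma and the $\delta^s$ boundary estimates. Where you genuinely diverge is the identification $\lambda^+=\Lambda^+=\widehat\Lambda^+$: the paper (Theorem~\ref{T3.3}) runs a sliding argument, taking $\psi\in\sF^+(\cD,\lambda)$ with $\lambda>\lambda^+$, setting $t_0=\sup\{t:\psi-t\Psi^+>0\}$ and using the Hopf lemma plus the two-sided $\delta^s$ bounds to force $\psi=t_0\Psi^+$ and a contradiction; you instead use a Collatz--Wielandt iteration $\psi\geq\mu^n\cT^n\psi\geq c\,\mu^n r^{n-1}\Psi^+$, which is a clean and valid alternative (the comparison step $\psi\geq\mu\cT\psi$ and the monotonicity/homogeneity of $\cT$ are all available, and $\cT\psi\geq c\Psi^+$ follows from Hopf plus $\Psi^+\leq C\delta^s$). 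Your simplicity argument is in substance the paper's Lemma~\ref{L2.3} (slide $\gamma\Psi^+$ under $\tilde\Psi$, use $\sM^-_*$ of the difference and Hopf to improve $\gamma$), just phrased with $\widecheck{\bI}$ and an interior touching-point computation rather than the cone-boundary formulation; likewise your uniqueness-of-eigenvalue step via $u=\mu\cT u$ matches the use of the strong-increase clause of Krein--Rutman. One small point to tighten: the Krein--Rutman uniqueness/strong-positivity clause needs a cone with nonempty interior, and the nonnegative cone in $\cC_0(\cD)$ with the sup norm has empty interior; you implicitly acknowledge this by invoking the ``$d^s$-weighted order,'' but then compactness of $\cT$ must be proved in the weighted space (as in the paper's $\cX$ with norm $[\,\cdot\,]_{\cC^\varepsilon(\bar\cD)}+\sup_\cD|\cdot/\delta^s|$, using the $\cC^\alpha(\bar\cD)$ bound on $u/\delta^s$ from Theorem~\ref{RS-Thm1}), not merely in $\cC_0(\bar\cD)$; with that adjustment the argument is complete.
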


Next result shows that $\Lambda^\pm$ are simple in  strong sense.

\begin{theorem}\label{T1.2}
Let $u \in\cC(\Rd)\cap L^1(\omega)$ be a viscosity solution of 
\begin{equation}\label{ET2.4A}
\begin{split}
\bI u + \Lambda^+ u &=0\quad \text{in}\; \cD,
\\
u &= 0 \quad \text{in}\; \cD^c,
\end{split}
\end{equation}
or of
\begin{equation}\label{ET2.4B}
\begin{split}
\bI u + \Lambda^+ u &\geq 0\quad \text{in}\; \cD,
\\[2mm]
u(x_0)>0, \quad u &\leq 0 \quad \text{in}\; \cD^c,
\end{split}
\end{equation}
for some $x_0\in\cD$. Then $u =t\Psi^+$ for some $t\in\RR$. A similar property holds for $(\Psi^-, \Lambda^-)$ when we reverse the inequalities above.
\end{theorem}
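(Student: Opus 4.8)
The plan is to run a scaling (sliding) argument: first prove the assertion for a subsolution as in \eqref{ET2.4B}, and then reduce the equation case \eqref{ET2.4A} to it. Write $d(x)\df\dist(x,\partial\cD)$. I will use the facts gathered in Section~\ref{S-prelim}: $\bI$ is elliptic with respect to $\cL_*$; every viscosity solution of $\bI v=g$ in $\cD$ with $g\in L^\infty(\cD)$ and $v=0$ in $\cD^c$ satisfies $\abs v\le C d^s$ in $\cD$; and the eigenfunction $\Psi^+$ of Theorem~\ref{T1.1} satisfies $c\,d^s\le\Psi^+\le C\,d^s$ in $\cD$ (fine boundary regularity together with the nonlocal Hopf lemma). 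Let $u$ be as in \eqref{ET2.4B} and put
\[
t^*\;\df\;\inf\bigl\{\,t\ge0\;:\;t\Psi^+\ge u\ \text{in}\ \Rd\,\bigr\}.
\]
Since $u\le0=t\Psi^+$ on $\cD^c$, the constraint acts only in $\cD$; comparing $u$ with the solution of $\bI v=-\Lambda^+\max_{\overline{\cD}}u^+$ in $\cD$, $v=0$ in $\cD^c$, the comparison principle together with the boundary bound above gives $u\le C d^s$ in $\cD$, so $u/\Psi^+$ is bounded above and $0<t^*<\infty$ (positivity because $u(x_0)>0$). By continuity $w\df t^*\Psi^+-u\ge0$ in $\Rd$. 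As $t^*>0$, the function $t^*\Psi^+$ solves $\bI(t^*\Psi^+)+\Lambda^+(t^*\Psi^+)=0$, whereas $u$ is a subsolution of $\bI u+\Lambda^+u\ge0$; hence, by the standard difference rule for viscosity sub/supersolutions of $\cL_*$-elliptic operators, $w$ is a viscosity supersolution of $\sM^-_*w+\Lambda^+w\le0$ in $\cD$.

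By the strong maximum principle for $\sM^-_*$ (with the nonnegative zeroth-order coefficient $\Lambda^+$), either $w\equiv0$ in $\cD$ or $w>0$ in $\cD$. In the latter case $w$ is a positive supersolution of $\sM^-_*w\le0$ (using $w\ge0$, $\Lambda^+\ge0$), so the Hopf lemma gives $w\ge c_0 d^s$ near $\partial\cD$; together with $\Psi^+\le C_0 d^s$ near $\partial\cD$ and compactness in the interior, $w\ge\varepsilon\Psi^+$ in $\cD$ for some $\varepsilon>0$. Since $\Psi^+\equiv0\ge u$ on $\cD^c$, this yields $u\le(t^*-\varepsilon)\Psi^+$ in $\Rd$, contradicting the minimality of $t^*$. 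Therefore $w\equiv0$ in $\cD$, i.e.\ $u=t^*\Psi^+$ in $\cD$. On $\cD^c$ one has $w=-u\ge0$; were $w$ not identically zero there, then at any $x\in\cD$ (where $w$ vanishes identically, so $\sM^-_*$ can be evaluated classically) we would get
\[
\sM^-_*w(x)\;=\;\frac{\lambda}{2}\int_{\Rd}\bigl(w(x+y)+w(x-y)\bigr)\,\abs{y}^{-d-2s}\,\D y\;>\;0\;=\;-\Lambda^+w(x),
\]
contradicting the supersolution inequality. Hence $u=t^*\Psi^+$ in $\Rd$ with $t^*>0$, which proves the claim under \eqref{ET2.4B}.

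Now let $u$ be as in \eqref{ET2.4A}; then $u$ is in particular a subsolution, and a direct computation with test functions shows that $u^+=\max(u,0)$ is a viscosity subsolution of $\bI u^++\Lambda^+u^+\ge0$ in $\cD$ (at a touching point where $u>0$ use $u^+\ge u$ with equality there; where $u\le0$ use $u^+(x)=0$ and $u^+\ge0$), with $u^+=0$ on $\cD^c$. If $u^+\not\equiv0$, then $u^+(x_0)>0$ for some $x_0\in\cD$, so the first part gives $u^+=t\Psi^+$ with $t>0$; then $u>0$ in $\cD$ and $u=0$ on $\cD^c$, so $u=u^+=t\Psi^+$. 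If $u^+\equiv0$, then $u\le0$: if $u\equiv0$ take $t=0$, and otherwise $-u$ solves $\check{\bI}(-u)+\Lambda^+(-u)=0$, where $\check{\bI}v\df-\bI(-v)=\inf_{L\in\cL}Lv\le\bI v$ is the conjugate operator, so $-u$ is a subsolution of $\bI(-u)+\Lambda^+(-u)\ge0$ with $(-u)(x_1)>0$ for some $x_1$; the first part then gives $-u=s\Psi^+$ with $s>0$, i.e.\ $u=t\Psi^+$ with $t=-s<0$. The property for $(\Psi^-,\Lambda^-)$ follows by running the same scheme for $-u$ and the operator $\check{\bI}=\inf_{L\in\cL}L$, which is again elliptic with respect to $\cL_*$ and whose positive principal eigenfunction is $-\Psi^-$.

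The analytic heart of the argument is the pair $\Psi^+\asymp d^s$ and the nonlocal Hopf lemma for $\sM^-_*$ — precisely the input that forces the restriction of the ellipticity class to subsets of $\cL_*$ — together with the strong maximum principle; all of these are recorded in Section~\ref{S-prelim}. Granting them, the one step that needs genuine care is the finiteness of $t^*$, which relies on the $d^s$ growth bound for subsolutions with non-positive exterior data; the remaining ingredients (the viscosity difference rule, the conjugate operator, and the positive-part subsolution property) are routine, after which the sliding argument closes in the familiar way.
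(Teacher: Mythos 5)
Your proposal is correct and follows essentially the paper's own proof: the $\delta^s$ barrier bound for the subsolution, the sliding constant $t^*$ (the paper's $t_0$, with the roles of $u$ and $\Psi^+$ interchanged), the difference inequality giving $\sM^-_*(t^*\Psi^+-u)+\Lambda^+(t^*\Psi^+-u)\le 0$, the Hopf lemma/strong maximum principle for $\sM^-_*$ to rule out a strictly positive gap, and the classical evaluation of the nonlocal operator at an interior zero to force $u=0$ in $\cD^c$. The only deviations are cosmetic: in the case $u\le 0$ of \eqref{ET2.4A} you apply the convexity inequality $\bI(-u)\ge -\bI u$ directly to $-u$ rather than first identifying $u$ with a multiple of $\Psi^-$ via Theorem~\ref{T1.1} as the paper does, and your positive-part subsolution step, while correct, is unnecessary since the subsolution case can be applied to $u$ itself once $u>0$ somewhere in $\cD$.
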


We could also show that the principal eigenvalues are isolated in the following sense.
\begin{theorem}\label{T1.3}
There exists $\varepsilon_0>0$, dependent on $d, \cD, \lambda, \Lambda$, such that
\begin{equation}\label{ET2.6A}
\begin{split}
\bI u + \mu u &=0\quad \text{in}\; \cD,
\\
u &= 0 \quad \text{in}\; \cD^c,
\end{split}
\end{equation}
has no solution $u\neq 0$ for $\mu\in (-\infty, \lambda^-+\varepsilon_0)\setminus\{\Lambda^+, \Lambda^-\}$.
\end{theorem}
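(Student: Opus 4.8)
The plan is to split the range of $\mu$ into $(-\infty,\Lambda^-)$, where I want to show $\Lambda^+$ is the only eigenvalue, and a right half‑neighbourhood $(\Lambda^-,\Lambda^-+\varepsilon_0)$, which I would handle by a compactness/contradiction argument in the spirit of \cite{BNV,BEQ}. For the first range, fix $\mu<\Lambda^-$ with $\mu\neq\Lambda^+$ and suppose $u\not\equiv0$ solves \eqref{ET2.6A}. Since $\bI u+\mu u=0\le0$ in $\cD$ and $u=0$ in $\cD^c$, I would invoke the minimum principle for $\bI+\mu$, which is valid precisely when $\mu<\Lambda^-$ (this being part of the content of Theorem~\ref{T2.4}), to conclude $u\ge0$ in $\cD$. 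A strong maximum principle then upgrades this to $u>0$ in $\cD$: if $u(x_0)=0$ at an interior point, the constant $0$ touches $u$ from below (resp. above) there, and evaluating the viscosity (super)solution inequality forces $\sup_{L}\int_{|y|>r}(u(x_0+y)+u(x_0-y))k(y/|y|)|y|^{-d-2s}\,\D y\le0$ for all $r>0$; because $k\ge\lambda>0$ and $u\ge0$, this yields $u\equiv0$. Hence $(\mu,u)$ is a positive eigenpair, and the uniqueness assertion of Theorem~\ref{T1.1} gives $\mu=\Lambda^+$, a contradiction; the mirror argument with reversed signs shows a solution with an interior sign is impossible unless $\mu\in\{\Lambda^+,\Lambda^-\}$.

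For the neighbourhood of $\Lambda^-$ I would argue by contradiction: if no admissible $\varepsilon_0$ exists, pick $\mu_n\downarrow\Lambda^-$ with $\mu_n>\Lambda^-$ and nontrivial solutions $u_n$ of $\bI u_n+\mu_n u_n=0$ in $\cD$, $u_n=0$ in $\cD^c$, normalized by $\|u_n\|_{L^\infty(\Rd)}=1$. The strong‑maximum‑principle step above shows every $u_n$ must change sign in $\cD$ (a one‑signed $u_n$ would be a positive or negative eigenfunction, forcing $\mu_n\in\{\Lambda^+,\Lambda^-\}$, impossible since $\mu_n>\Lambda^-\ge\Lambda^+$). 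The crux — and the step I expect to be the main obstacle — is a lower bound on the nodal sets that is uniform in $n$: $|\{u_n>0\}|\ge\delta_0$ and $|\{u_n<0\}|\ge\delta_0$ for some $\delta_0=\delta_0(d,s,\lambda,\Lambda,\cD)>0$. To get this I would use the nonlocal Alexandrov–Bakelman–Pucci estimate of \cite{CS09}. The truncation $w_n\df u_n^+$ agrees with $u_n$ near every point of $\Omega_n^+\df\{u_n>0\}$ and satisfies $w_n\ge u_n$ everywhere, so monotonicity of each $L$ shows $w_n$ is a viscosity subsolution of $\sM^+_0 w_n+\mu_n w_n\ge0$ in $\Omega_n^+$, with $w_n=0$ outside $\Omega_n^+$; ABP then gives $\|w_n\|_\infty\le C\mu_n|\Omega_n^+|^{\kappa}\|w_n\|_\infty$ for constants $C=C(d,s,\lambda,\Lambda,\cD)$ and $\kappa=\kappa(d,s,\lambda,\Lambda)>0$, and since $u_n$ changes sign we have $\|w_n\|_\infty>0$, so $|\Omega_n^+|\ge(C(\Lambda^-+1))^{-1/\kappa}\eqqcolon\delta_0$. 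Applying the same argument to $-u_n$ (a solution of $\inf_{L\in\cL}Lv+\mu_n v=0$) bounds $|\{u_n<0\}|$ from below as well.

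Finally I would pass to the limit. By the uniform interior regularity for $\bI$ (\cite{CS09,CS11}) together with the fine boundary regularity available for the class $\cL_*$ (\cite{RS16}) — which, as noted before the statement, is precisely why the ellipticity class is taken inside $\cL_*$ — the family $\{u_n\}$ is equicontinuous on $\Rd$, so along a subsequence $u_n\to u$ uniformly on $\Rd$ with $\|u\|_\infty=1$ and $u=0$ in $\cD^c$. Stability of viscosity solutions (and $\mu_n\to\Lambda^-$) gives $\bI u+\Lambda^- u=0$ in $\cD$, so Theorem~\ref{T1.2} (the $(\Psi^-,\Lambda^-)$ version) forces $u=t\Psi^-$ with $t\neq0$; thus $u$ has a strict sign throughout $\cD$, say $u<0$ in $\cD$ (the case $u>0$ is symmetric). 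Choosing a compact $K\Subset\cD$ with $|\cD\setminus K|<\delta_0$, uniform convergence gives $u_n<0$ on $K$ for all large $n$, whence $|\{u_n>0\}|\le|\cD\setminus K|<\delta_0$, contradicting the nodal bound. This contradiction produces the required $\varepsilon_0$, and inspection of the constants above shows it may be taken to depend only on $d,\cD,\lambda,\Lambda$.
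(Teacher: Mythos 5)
Your treatment of the range $\mu<\Lambda^-$ is correct and is essentially the paper's own argument, except for a misattribution: the ``minimum principle valid for $\mu<\Lambda^-$'' is Theorem~\ref{T1.4}(b) (Theorem~\ref{T2.4} is only Hopf's lemma for $\sM^-_*$); since Theorem~\ref{T1.4} is proved independently of Theorem~\ref{T1.3}, there is no circularity, and combining it with your strong maximum principle computation and the uniqueness part of Theorem~\ref{T1.1} indeed rules out every $\mu<\Lambda^-$ with $\mu\neq\Lambda^+$. The genuine problem is the step you yourself flag as the crux: the uniform nodal bound $\abs{\{u_n>0\}}\ge\delta_0$. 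The inequality $\norm{w_n}_\infty\le C\mu_n\abs{\{u_n>0\}}^{\kappa}\norm{w_n}_\infty$ is not what the ABP estimate of \cite{CS09} gives. That estimate is a Riemann-sum bound of the form $(\sup w)^d\le C\sum_j\bigl(\max_{\bar Q_j}f^+\bigr)^d\abs{Q_j}$, where the $Q_j$ are finitely many disjoint cubes that merely intersect the contact set and whose diameters are only bounded \emph{above} by a fixed constant (depending on $s$); when $\abs{\{u_n>0\}}$ is small, $\sum_j\abs{Q_j}$ is controlled only by the measure of a fixed-width neighbourhood of the contact set, which does not tend to $0$, so the bound degenerates to $\sup w_n\le C\mu_n\norm{w_n}_\infty$ and carries no information about $\abs{\{u_n>0\}}$. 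A quantitative ``ABP through the measure of the contact set'' (equivalently a maximum principle in domains of small measure) for $\sM^+_0$ is true, but it requires the later interpolation-type ABP of Guillen and Schwab rather than \cite{CS09}, and you neither cite nor prove such a result; as written, the nodal lower bound, and hence the whole contradiction near $\Lambda^-$, is unproven.

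For comparison, the paper closes this case without any measure estimate: using Theorem~\ref{RS-Thm1} (with right-hand side $-\mu_nu_n$ uniformly bounded) one extracts the subsequence so that, in addition to $u_n\to u$ uniformly, $u_n/\delta^s\to u/\delta^s$ uniformly on $\bar\cD$; since $u=t\Psi^-$ with $t\neq0$ and $\abs{\Psi^-}\geq\kappa\,\delta^s$ in $\cD$ by Hopf's lemma, this forces $u_n$ itself to have a strict sign throughout $\cD$ for all large $n$, which contradicts Theorem~\ref{T1.1} because $\mu_n\notin\{\Lambda^+,\Lambda^-\}$. If you replace your ABP step by this $\delta^s$-ratio convergence (or supply a correct small-measure ABP with a precise reference or proof), the remainder of your limiting argument goes through, and your ``nodal set'' route would then be a legitimate, if heavier, alternative to the paper's boundary-regularity argument.
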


Let us now state the maximum principles. We begin with the {\it refined maximum principles} for nonlocal operator. Recently in \cite{BL17b},
refined maximum principle is established for a general class of nonlocal Schr\"{o}dinger operators. The proof technique uses stochastic representation
of the eigenfunction which is not available for nonlinear problems.
\begin{theorem}\label{T1.4}
\
\begin{itemize}
\item[(a)] Let $u\in\cC(\Rd)\cap L^1(\omega)$ satisfy
\begin{equation*}
\begin{split}
\bI u + \mu u &\geq 0\quad \text{in}\; \cD,
\\
 u &\leq 0 \quad \text{in}\; \cD^c,
\end{split}
\end{equation*}
for some $\mu<\Lambda^+$, then it must hold that $u\leq 0$ in $\Rd$.
\item[(b)] Let $u\in\cC(\Rd)\cap L^1(\omega)$ satisfy
\begin{equation*}
\begin{split}
\bI u + \mu u &\leq 0\quad \text{in}\; \cD,
\\
u &\geq 0 \quad \text{in}\; \cD^c,
\end{split}
\end{equation*}
for some $\mu<\Lambda^-$, then it must hold that $u\geq 0$ in $\Rd$.
\end{itemize}
\end{theorem}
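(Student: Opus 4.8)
The plan is to argue by contradiction, comparing $u$ against a multiple of the principal eigenfunction $\Psi^+$ from \cref{T1.1} and exploiting that the strict bound $\mu<\Lambda^+$ leaves room in the eigenvalue equation. I would prove (a) in detail; part (b) follows from the very same argument applied to $v\df-u$, with $\bI$ replaced by $\widetilde{\bI}v\df-\bI(-v)=\inf_{L\in\cL}Lv$ and with the eigenpair $(-\Psi^-,\Lambda^-)$ from \cref{T1.1} playing the role of $(\Psi^+,\Lambda^+)$ (note $\widetilde{\bI}(-\Psi^-)+\Lambda^-(-\Psi^-)=0$ with $-\Psi^->0$ in $\cD$); the extremal inequalities and boundary estimates invoked below are insensitive to this sup/inf interchange.

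So suppose, towards a contradiction, that $\sup_\cD u>0$; otherwise $u\le 0$ on all of $\Rd$ and there is nothing to prove. By positive $1$‑homogeneity of $\bI$ and \cref{T1.1}, for every $\tau\ge 0$
\begin{equation*}
\bI(\tau\Psi^+)+\mu(\tau\Psi^+)=\tau(\mu-\Lambda^+)\Psi^+\quad\text{in }\cD,
\end{equation*}
which is $<0$ in $\cD$ as soon as $\tau>0$. I then set
\begin{equation*}
\tau^*\df\inf\{\tau\ge 0:\ u\le\tau\Psi^+\ \text{in }\cD\}.
\end{equation*}
Positivity of $\tau^*$ is immediate from $\sup_\cD u>0$ and $\Psi^+=0$ on $\cD^c$. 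Finiteness uses the sharp boundary behaviour: $\Psi^+\ge c\,\dist(\cdot,\partial\cD)^s$ near $\partial\cD$, while any viscosity subsolution of $\bI u+\mu u\ge 0$ that vanishes on $\cD^c$ satisfies $u\le C\,\dist(\cdot,\partial\cD)^s$ near $\partial\cD$ via a barrier for $\sM^+_*$ (this is exactly where the restriction to $\cL_*$ rather than $\cL_0$ matters, cf.\ \cite{RS16}); away from $\partial\cD$ one compares on compact subsets of $\cD$.

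Now put $w\df\tau^*\Psi^+-u$, so $w\ge 0$ in $\Rd$ (on $\cD^c$, $w=-u\ge 0$; in $\cD$, $w\ge 0$ by definition of $\tau^*$). Since $\tau^*\Psi^+$ is a classical, hence viscosity, supersolution of $\bI v+\mu v=-g$ in $\cD$ with $g\df\tau^*(\Lambda^+-\mu)\Psi^+>0$ in $\cD$, and $u$ is a viscosity subsolution of $\bI u+\mu u\ge 0$ in $\cD$, subtracting and using $\cL\subseteq\cL_*$ yields that $w$ is a viscosity supersolution of
\begin{equation*}
\sM^-_* w+\mu w\le -g\quad\text{in }\cD.
\end{equation*}
A nonnegative viscosity supersolution of this inequality cannot vanish at any point of $\cD$: if $w(x_0)=0$ with $x_0\in\cD$, testing with $\phi\equiv 0$ (admissible since $w\ge 0$) and the explicit kernels in \eqref{E1.1} and \eqref{Assmp1} give $\sM^-_*w(x_0)\ge 0$, while the supersolution property forces $\sM^-_*w(x_0)\le -g(x_0)<0$. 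Hence $w>0$ in $\cD$, in particular $w\not\equiv 0$, so the nonlocal Hopf‑type lemma for $\sM^-_*$ (again available in the class $\cL_*$) gives $w\ge c'\,\dist(\cdot,\partial\cD)^s$ near $\partial\cD$. Combining with $\Psi^+\le C\,\dist(\cdot,\partial\cD)^s$ near $\partial\cD$ and with $w>0$, $\Psi^+>0$ on compact subsets of $\cD$, I obtain $\delta_0>0$ with $w\ge\delta_0\Psi^+$ throughout $\cD$, i.e.\ $u\le(\tau^*-\delta_0)\Psi^+$ in $\cD$; since $u\le 0$ on $\cD^c$ this gives $u\le\max\{\tau^*-\delta_0,0\}\,\Psi^+$ in $\Rd$ with $\max\{\tau^*-\delta_0,0\}<\tau^*$, contradicting the minimality of $\tau^*$. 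Therefore $\sup_\cD u\le 0$, proving (a).

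The routine parts are the $1$‑homogeneity, the extremal/subadditivity inequalities for $\bI$, and the strong maximum principle step. The genuinely delicate input — and the main obstacle — is the sharp boundary behaviour used twice: the two‑sided estimate $\Psi^+\asymp\dist(\cdot,\partial\cD)^s$, the upper barrier $u\le C\,\dist(\cdot,\partial\cD)^s$ for subsolutions, and the Hopf lower bound $w\ge c'\,\dist(\cdot,\partial\cD)^s$ for nonnegative supersolutions. These rest on the fine boundary regularity theory for operators elliptic with respect to $\cL_*$, which fails for the wider class $\cL_0$; I would isolate each as a lemma in \cref{S-prelim} and simply quote them here.
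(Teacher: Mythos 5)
Your proof is correct and is essentially the paper's own argument: there, Theorem~\ref{T1.4}(a) is deduced by running the sliding comparison of Theorems~\ref{T1.2} and~\ref{T3.3} with the identical ingredients — the barrier bound $u\leq C\delta^s$ from Lemma~\ref{RS-Lem1}, the Hopf bound $\Psi^+\geq c\,\delta^s$, the extremal inequality turning the difference into a supersolution for $\sM^-_*$, and the Hopf/strong maximum principle of Theorem~\ref{T2.4} — your only cosmetic deviation being the reciprocal parametrization $\tau^*=\inf\{\tau : u\leq \tau\Psi^+\ \text{in}\ \cD\}$ and a contradiction with its minimality, where the paper instead uses $t_0=\sup\{t : \Psi^+-tu>0\ \text{in}\ \cD\}$, concludes $u=t\Psi^+$, and contradicts $\mu<\Lambda^+$. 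The one caveat (shared with, and glossed over by, the paper) is that for $\mu<0$ the inequality $\sM^-_*w+\mu w\leq -g$ does not literally give $\sM^-_*w\leq 0$, so invoking Theorem~\ref{T2.4} for $w$ near $\partial\cD$ requires the routine extension of the barrier argument to a bounded zeroth-order term (shrink the annulus so that the factor $r^{-2s}$ dominates $\abs{\mu}$).
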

As a corollary to Theorem~\ref{T1.4} we obtain the following comparison result.
\begin{corollary}\label{C2.1}
Suppose that $\mu<\Lambda^+$ and $f\in\cC(\bar\cD)$. Suppose that for some $u, v\in\cC(\Rd)\cap L^1(\omega)$ we have
\begin{align*}
\bI u + \mu u\geq f\geq \bI v + \mu v\quad & \text{in}\; \cD,
\\
u\leq v\quad & \text{in}\; \cD^c.
\end{align*}
Then it holds that $u\leq v$ in $\Rd$.
\end{corollary}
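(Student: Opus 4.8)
The statement follows by applying \cref{T1.4}(a) to the difference $w\df u-v$. Note first that $w\in\cC(\Rd)\cap L^1(\omega)$, that $w\le 0$ in $\cD^c$ by hypothesis, and that $\mu<\Lambda^+$. Hence, once we have shown that $w$ is a viscosity subsolution of
\[
\bI w+\mu w\ge 0\qquad\text{in }\cD,
\]
\cref{T1.4}(a) gives $w\le 0$ in $\Rd$, i.e. $u\le v$ in $\Rd$, which is the assertion.

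To obtain this inequality for $w$, set $g\df f-\mu u$ and $h\df f-\mu v$, which are continuous on $\bar\cD$; the hypotheses then say that $u$ is a viscosity subsolution of $\bI u\ge g$ and $v$ a viscosity supersolution of $\bI v\le h$ in $\cD$. Since $\bI=\sup_{L\in\cL}L$ is the maximal operator associated with the class $\cL$, the standard stability of viscosity sub/supersolutions under subtraction for nonlocal operators (see \cite{CS09}) shows that $w=u-v$ is a viscosity subsolution of $\bI w\ge g-h$ in $\cD$. As $g-h=-\mu(u-v)=-\mu w$, this is precisely $\bI w+\mu w\ge 0$, completing the proof.

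The delicate point is the stability property just invoked: for nonlocal equations the difference of a subsolution and a supersolution need not be a subsolution of the same equation, but it is a subsolution of the extremal operator of the ellipticity class, which here coincides with $\bI$ itself since $\bI$ is already a maximal operator. Its proof is the usual one, via sup- and inf-convolution regularization, the hypothesis $u,v\in L^1(\omega)$ guaranteeing that $\bI$ is well defined along the argument; all the remaining manipulations are direct substitutions.
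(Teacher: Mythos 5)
Your proposal is correct and follows essentially the same route as the paper: both reduce to the difference $w=u-v$, invoke the Caffarelli--Silvestre result (Theorem~5.9 of \cite{CS09}) that the difference of a sub- and a supersolution is a subsolution for the extremal operator of the class -- which is $\bI$ itself here -- to get $\bI w+\mu w\geq 0$ in $\cD$ with $w\leq 0$ in $\cD^c$, and then conclude by Theorem~\ref{T1.4}(a).
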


\begin{proof}
Note that $\bI$ is the extremal operator with respect to the class $\cL$ in the sense of \cite[p.~602]{CS09}. Therefore, by \cite[Theorem~5.9]{CS09}
we obtain that 
\begin{equation*}
\begin{split}
\bI (u-v) + \mu (u-v) &\geq 0\quad \text{in}\; \cD,
\\
(u-v) &\leq 0 \quad \text{in}\; \cD^c.
\end{split}
\end{equation*}
Hence $u\leq v$ in $\Rd$ follows from Theorem~\ref{T1.4}(a).
\end{proof}

Let $\lambda(L, \cD)$ be the principal eigenvalue of the operator $L$ with Dirichlet exterior condition.

\begin{corollary}
It follows that 
$$\Lambda^+\leq \inf_{L\in\cL}\, \lambda(L, \cD)\leq \sup_{L\in\cL}\, \lambda(L, \cD)\leq \Lambda^-.$$
\end{corollary}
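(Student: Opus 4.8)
The plan is to exploit the single structural fact that $Lu\le\bI u$ for every $L\in\cL$ (pointwise, and in the viscosity sense), since $\bI$ is the extremal operator with respect to $\cL$, together with Theorem~\ref{T1.1} and the linear Dirichlet eigenvalue theory for each $L\in\cL$. The middle inequality $\inf_{L\in\cL}\lambda(L,\cD)\le\sup_{L\in\cL}\lambda(L,\cD)$ is trivial, so the content lies in the two outer ones, which I would handle by producing an explicit element of $\sF^-(\cD,\lambda(L,\cD))$ on one side and by comparing $\Psi^+$ with the linear eigenfunction on the other.

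For the right-hand inequality, fix $L\in\cL$ and let $\varphi>0$ be its Dirichlet principal eigenfunction, $L\varphi+\lambda(L,\cD)\varphi=0$ in $\cD$, $\varphi=0$ in $\cD^c$. I would set $\psi\df-\varphi$ and verify $\psi\in\sF^-(\cD,\lambda(L,\cD))$: indeed $\psi\in\cC(\Rd)\cap L^1(\omega)$ (as $\varphi$ is bounded, continuous, supported in $\bar\cD$), $\psi<0$ in $\cD$ and $\psi\le0$ in $\Rd$, and by interior regularity $\varphi\in\cC^{2s+}(\cD)$, so for $x\in\cD$ one may evaluate pointwise
\[
\bI\psi(x)=\sup_{L'\in\cL}L'\psi(x)=-\inf_{L'\in\cL}L'\varphi(x)\ge-L\varphi(x)=\lambda(L,\cD)\varphi(x)=-\lambda(L,\cD)\psi(x),
\]
i.e.\ $\bI\psi+\lambda(L,\cD)\psi\ge0$ in $\cD$. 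Hence $\lambda(L,\cD)\le\Lambda^-$, and taking the supremum over $L\in\cL$ gives $\sup_{L\in\cL}\lambda(L,\cD)\le\Lambda^-$.

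For the left-hand inequality, I would use the eigenfunction $\Psi^+$ from Theorem~\ref{T1.1}: $\bI\Psi^++\Lambda^+\Psi^+=0$ in $\cD$, $\Psi^+>0$ in $\cD$, $\Psi^+=0$ in $\cD^c$, with $\Psi^+\in\cC^{2s+}(\cD)\cap\cC_0(\cD)$. For $L\in\cL$, since $L\Psi^+\le\bI\Psi^+$ pointwise in $\cD$, the function $\Psi^+$ is a positive supersolution of $L\phi+\Lambda^+\phi\le0$ in $\cD$ with $\Psi^+\ge0$ in $\cD^c$; by the Berestycki--Nirenberg--Varadhan-type characterization of the principal eigenvalue of the linear operator $L$ this forces $\Lambda^+\le\lambda(L,\cD)$, and taking the infimum over $L\in\cL$ concludes. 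If one prefers to bypass that characterization, the same conclusion follows from a sliding argument: $\Psi^+$ and $\varphi$ are both comparable to $\dist(\cdot,\partial\cD)^s$ near $\partial\cD$ (the fine boundary regularity available for the class $\cL_*$), so $\tau\df\max\{t\ge0:t\varphi\le\Psi^+\text{ in }\Rd\}$ is positive and finite, and a Hopf-type argument shows it is attained at some interior $x_0\in\cD$; then $L(\Psi^+-\tau\varphi)(x_0)\ge0$ together with $L\Psi^+(x_0)\le-\Lambda^+\Psi^+(x_0)$ and $L\varphi(x_0)=-\lambda(L,\cD)\varphi(x_0)$ yields $(\lambda(L,\cD)-\Lambda^+)\Psi^+(x_0)\ge0$, hence $\Lambda^+\le\lambda(L,\cD)$.

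The routine points are the $L^1(\omega)$ membership and continuity of $\varphi$ and $\Psi^+$, and the pointwise (rather than merely viscosity) evaluation of $\bI$ and $L$ on these functions inside $\cD$, which is legitimate by interior $\cC^{2s+\alpha}$ regularity. The only genuinely nontrivial input is the boundary behavior underlying the left inequality --- either the linear principal-eigenvalue characterization or, equivalently, the Hopf-type estimate showing that positive supersolutions decay no faster than $\dist(\cdot,\partial\cD)^s$ --- which is exactly where the restriction to the ellipticity class $\cL_*$ enters and which is already available from the boundary regularity results recalled earlier in the paper.
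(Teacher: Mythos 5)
Your proposal is correct, and its second half coincides with the paper: for $\sup_{L\in\cL}\lambda(L,\cD)\leq\Lambda^-$ the paper also simply observes that $-\Psi_L$ (your $-\varphi$) satisfies $\bI(-\Psi_L)+\lambda(L,\cD)(-\Psi_L)\geq L(-\Psi_L)+\lambda(L,\cD)(-\Psi_L)=0$, so it lies in $\sF^-(\cD,\lambda(L,\cD))$ and the bound follows from the definition of $\Lambda^-$. For the left inequality you take a genuinely different route. The paper keeps using the linear eigenfunction $\Psi_L$ as the test function: since $\bI\Psi_L+\lambda(L,\cD)\Psi_L\geq 0$ in $\cD$ with $\Psi_L=0$ in $\cD^c$ and $\Psi_L>0$ in $\cD$, the refined maximum principle below $\Lambda^+$ (Theorem~\ref{T1.4}(a); the citation of Corollary~\ref{C2.1}(a) in the printed proof is really this) forces $\lambda(L,\cD)\geq\Lambda^+$, i.e.\ the argument stays entirely inside the nonlinear machinery already established in the paper. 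You instead use the nonlinear eigenfunction $\Psi^+$ as a positive supersolution of $L\phi+\Lambda^+\phi\leq 0$ and invoke a BNV-type characterization of $\lambda(L,\cD)$ (or, alternatively, a sliding/Hopf argument comparing $\Psi^+$ with $\varphi$ via the $\delta^s$ boundary bounds). This is fine, but note that the linear characterization is not proved in the paper; it is legitimate because it follows from the paper's own results applied to the singleton class $\cL=\{L\}$ (Theorems~\ref{T1.1}, \ref{T1.5}, \ref{T3.3}), and your sliding alternative essentially reproves that special case by hand — in the sliding version the interior touching point is not automatic, and the clean way to finish is the contradiction argument (assume $\lambda(L,\cD)<\Lambda^+$, check $\sM^-_*(\Psi^+-\tau\varphi)\leq 0$, then apply Hopf and the maximality of $\tau$), exactly as in the paper's proof of Theorem~\ref{T3.3}. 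The trade-off: the paper's proof is shorter and needs only its stated theorems, while yours makes explicit the link to the linear principal-eigenvalue theory at the cost of an input the paper never formally records.
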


\begin{proof}
Let $\Psi_L$ be the positive eigenfunction of the operator $L$ corresponding to $\lambda(L, \cD)$. It then follows that
$$0= L\Psi_L + \lambda(L, \cD)\Psi_L\leq \bI\Psi_L + \lambda(L, \cD)\Psi_L, \quad \text{and}\quad \Psi_L>0\quad \text{in}\; \cD.$$
Therefore, by Corollary~\ref{C2.1}(a) we must have $\Lambda^+\leq \lambda(L, \cD)$ and hence, $\Lambda^+\leq \inf_{L\in\cL}\, \lambda(L, \cD)$.
Also note that 
$$0= L(-\Psi_L) + \lambda(L, \cD)(-\Psi_L)\leq \bI(-\Psi_L) + \lambda(L, \cD)(-\Psi_L).$$
Then by definition of $\Lambda^-$ we get $\Lambda^-\geq \lambda(L, \cD)$ implying $\Lambda^-\geq \sup_{L\in\cL}\lambda(L, \cD)$.
\end{proof}

We also have the following.

\begin{theorem}\label{T1.5}
Suppose that for some $u\in\cC(\Rd)\cap L^1(\omega)$ we have
\[
\left\{\begin{array}{lll}
\bI u + \mu u \leq 0, & \text{in}\; \cD,
\\[2mm]
u\geq 0\; \text{in}\; \Rd,\; \; u>0 & \text{in}\; \cD,
\end{array}
\right.
\quad resp,\quad 
\left\{
\begin{array}{lll}
\bI u + \mu u\geq 0, & \text{in}\; \cD,
\\[2mm]
u\leq 0\; \text{in}\; \Rd,\; \; u<0 & \text{in}\; \cD.
\end{array}
\right.
\]
Then either $\mu<\Lambda^+$ or $\mu=\Lambda^+$ with $u=t\Psi^+$ for some $t>0$ (resp.
either $\mu<\Lambda^-$ or $\mu=\Lambda^-$ with $u=t\Psi^-$ for some $t>0$).
\end{theorem}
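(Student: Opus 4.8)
The plan is to establish the statement concerning $\Lambda^+$ in detail; the one for $\Lambda^-$ follows by the standard symmetry, applying the same scheme to $-u$ and to the concave extremal operator $u\mapsto\inf_{L\in\cL}Lu$, whose positive principal eigenpair is $(\Lambda^-,-\Psi^-)$ and for which superadditivity replaces the subadditivity used below. So assume $u\in\cC(\Rd)\cap L^1(\omega)$ satisfies $\bI u+\mu u\le0$ in $\cD$, $u\ge0$ in $\Rd$, and $u>0$ in $\cD$. Then $u\in\sF^+(\cD,\mu)$ directly from the definition, so $\sF^+(\cD,\mu)\neq\emptyset$ and hence $\mu\le\Lambda^+$. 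It therefore remains to show that $\mu=\Lambda^+$ forces $u=t\Psi^+$ for some $t>0$; assume $\mu=\Lambda^+$ from now on.

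First I would set $\bar t\df\sup\{t\ge0:\ u\ge t\Psi^+\ \text{in}\ \Rd\}$. This set contains $0$ and is bounded above, since $u(x_1)<\infty$ while $\Psi^+(x_1)>0$ at any fixed $x_1\in\cD$; hence $\bar t\in[0,\infty)$ and, by continuity, $w\df u-\bar t\Psi^+\ge0$ in $\Rd$. The crucial step is to transfer the critical supersolution property to a linear operator: $u$ satisfies $\bI u+\Lambda^+u\le0$ and $\bar t\Psi^+$ satisfies $\bI(\bar t\Psi^+)+\Lambda^+(\bar t\Psi^+)=0$ in $\cD$ (both in the viscosity sense), and, since $\bI$ is the maximal operator of the class $\cL$ and $\cL\subset\cL_*$, a difference inequality of the type of \cite[Theorem~5.9]{CS09} — already used in the proof of Corollary~\ref{C2.1} — yields, in the viscosity sense, the analogue of the pointwise estimate
\[
\sM^-_*w\ \le\ \bI u-\bI(\bar t\Psi^+)\ =\ \bI u+\bar t\Lambda^+\Psi^+\ \le\ -\Lambda^+u+\bar t\Lambda^+\Psi^+\ =\ -\Lambda^+w\qquad\text{in }\cD,
\]
where I used the positive $1$-homogeneity of $\bI$, the identity $\bI\Psi^+=-\Lambda^+\Psi^+$, and $\bI u\le-\Lambda^+u$. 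Since $\Lambda^+>0$ and $w\ge0$, this shows in particular that $w$ is a nonnegative viscosity supersolution of $\sM^-_*w\le0$ in $\cD$.

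By the strong maximum principle for $\sM^-_*$, either $w\equiv0$ in $\Rd$ — in which case $u=\bar t\Psi^+$, with $\bar t>0$ since $u>0$ in $\cD$, as claimed — or $w>0$ in $\cD$. In the latter case $w$ is a positive viscosity supersolution of $\sM^-_*w\le0$ in $\cD$ with $w\ge0$ on $\cD^c$, so the nonlocal Hopf lemma gives $w\ge c\,\dist(\cdot,\partial\cD)^s$ in a neighbourhood of $\partial\cD$ for some $c>0$; combining this with the boundary bound $\Psi^+\le C\,\dist(\cdot,\partial\cD)^s$ — which holds because $\Psi^+\in\cC^{2s+}(\cD)\cap\cC_0(\cD)$ solves an equation with bounded right-hand side for an operator elliptic with respect to $\cL_*$ — and with the interior positivity of both $w$ and $\Psi^+$, one gets $\inf_{\cD}w/\Psi^+=\delta>0$. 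Hence $u=w+\bar t\Psi^+\ge(\bar t+\delta)\Psi^+$ in $\Rd$, contradicting the definition of $\bar t$. Therefore $w\equiv0$, that is, $u=\bar t\Psi^+$ with $\bar t>0$ (in particular $u$ vanishes on $\cD^c$), which proves the theorem.

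The step I expect to be the main obstacle is exactly the transfer at the critical level $\mu=\Lambda^+$: establishing, within the nonlocal viscosity framework, that $w=u-\bar t\Psi^+$ is a supersolution of $\sM^-_*w+\Lambda^+w\le0$ in $\cD$, a regime where the comparison and refined maximum principles (Corollary~\ref{C2.1}, Theorem~\ref{T1.4}) are no longer applicable. This rests on the nonlocal difference/stability lemmas together with the $L^1(\omega)$ control of the tails; the remaining tools — the strong maximum principle and Hopf lemma for operators elliptic with respect to $\cL_*$, and the $\dist^s$ boundary estimate for $\Psi^+$ — are available from the regularity results collected in Section~\ref{S-prelim}.
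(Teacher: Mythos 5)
Your proposal is correct and follows essentially the same route as the paper: after observing $\mu\le\Lambda^+$ from the definition, the paper's proof also sets $t_0=\sup\{t:\ u-t\Psi^+>0\ \text{in}\ \cD\}$ and invokes the sliding argument of Theorems~\ref{T1.2} and~\ref{T3.3}, i.e.\ the extremal-operator inequality $\sM^-_*(u-t_0\Psi^+)\le 0$ via \cite[Theorem~5.9]{CS09}, the strong maximum principle and Hopf lemma of Theorem~\ref{T2.4}, and the $\delta^s$ bound on $\Psi^+$ from \eqref{ET1.6A} -- precisely the ingredients you use. The only difference is organizational: you apply the strong-maximum-principle dichotomy first and contradict the maximality of $\bar t$ when $w>0$, while the paper first produces a touching point and then concludes equality, which is the same argument rearranged.
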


An interesting corollary to the above result is the following strict monotonicity property of the principal eigenvalues.
\begin{corollary}\label{C2.2}
Suppose that $\cD_1\subsetneq\cD_2$. Then $\Lambda^+(\cD_1)> \Lambda^+(\cD_2)$ and $\Lambda^-(\cD_1)> \Lambda^-(\cD_2)$.
\end{corollary}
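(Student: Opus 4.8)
The plan is to derive the strict monotonicity \(\Lambda^+(\cD_1) > \Lambda^+(\cD_2)\) from Theorem~\ref{T1.5} by a restriction argument. First I would take \(\Psi^+_2 = \Psi^+(\cD_2)\), the positive principal eigenfunction on the larger domain, so that \(\bI\Psi^+_2 + \Lambda^+(\cD_2)\Psi^+_2 = 0\) in \(\cD_2\), \(\Psi^+_2 > 0\) in \(\cD_2\), and \(\Psi^+_2 = 0\) on \(\cD_2^c\). Since \(\cD_1 \subset \cD_2\), the restriction of \(\Psi^+_2\) to \(\cD_1\) certainly satisfies \(\bI\Psi^+_2 + \Lambda^+(\cD_2)\Psi^+_2 \le 0\) in \(\cD_1\) (in fact with equality), together with \(\Psi^+_2 \ge 0\) in \(\Rd\) and \(\Psi^+_2 > 0\) in \(\cD_1\). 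So \(\Psi^+_2\) is an admissible test function for the variational characterization of \(\Lambda^+(\cD_1)\) in the form given in the Remark after the definitions, whence \(\Lambda^+(\cD_2) \le \Lambda^+(\cD_1)\).

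It then remains to rule out equality. Suppose \(\Lambda^+(\cD_1) = \Lambda^+(\cD_2) =: \mu\). Then \(\Psi^+_2\) satisfies \(\bI\Psi^+_2 + \mu\Psi^+_2 \le 0\) in \(\cD_1\) with \(\Psi^+_2 \ge 0\) in \(\Rd\), \(\Psi^+_2 > 0\) in \(\cD_1\), and \(\mu = \Lambda^+(\cD_1)\), so Theorem~\ref{T1.5} forces \(\Psi^+_2 = t\Psi^+_1\) in \(\Rd\) for some \(t > 0\), where \(\Psi^+_1 = \Psi^+(\cD_1)\). But \(\Psi^+_1 = 0\) on \(\cD_1^c\), hence \(\Psi^+_2 = 0\) on \(\cD_1^c\); in particular \(\Psi^+_2 \equiv 0\) on the nonempty open set \(\cD_2 \setminus \overline{\cD_1}\), contradicting \(\Psi^+_2 > 0\) throughout \(\cD_2\). (Here is where the strict inclusion \(\cD_1 \subsetneq \cD_2\) is used — we need \(\cD_2 \setminus \overline{\cD_1}\) to have nonempty interior, which holds since \(\cD_2\) is open and \(\cD_1\) is a proper subdomain; if one only knew \(\cD_2 \setminus \cD_1 \ne \emptyset\) one would instead note that \(\Psi^+_2\) vanishing at an interior point of \(\cD_2\) already contradicts positivity.) Therefore \(\Lambda^+(\cD_1) > \Lambda^+(\cD_2)\).

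The argument for \(\Lambda^-\) is entirely parallel: take \(\Psi^-_2 = \Psi^-(\cD_2) < 0\) in \(\cD_2\), restrict to \(\cD_1\) to get \(\bI\Psi^-_2 + \Lambda^-(\cD_2)\Psi^-_2 \ge 0\) in \(\cD_1\), \(\Psi^-_2 \le 0\) in \(\Rd\), \(\Psi^-_2 < 0\) in \(\cD_1\), so by the characterization of \(\Lambda^-(\cD_1)\) we get \(\Lambda^-(\cD_2) \le \Lambda^-(\cD_1)\); and if equality held, the second alternative of Theorem~\ref{T1.5} would give \(\Psi^-_2 = t\Psi^-_1\) with \(t > 0\), again forcing \(\Psi^-_2 \equiv 0\) on \(\cD_2 \setminus \overline{\cD_1}\), a contradiction.

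The only genuinely delicate point — and the one I would be most careful about — is the verification that the restriction of \(\Psi^+_2\) (which is defined on all of \(\Rd\) and vanishes only outside \(\cD_2\), not outside \(\cD_1\)) is a legitimate competitor in the supremum defining \(\Lambda^+(\cD_1)\). This is exactly why the Remark's reformulation of \(\Lambda^{\pm}\) matters: that characterization only requires \(\psi \in \cC(\Rd) \cap L^1(\omega)\) with \(\psi \ge 0\), \(\psi \gneq 0\) in \(\cD_1\), and the differential inequality holding in \(\cD_1\) — it does \emph{not} demand \(\psi = 0\) on \(\cD_1^c\). Since \(\Psi^+_2 \in \cC^{2s+}(\cD_2) \cap \cC_0(\cD_2) \subset \cC(\Rd) \cap L^1(\omega)\) and is strictly positive on \(\cD_1\), it qualifies, and the inequality \(\bI\Psi^+_2 + \mu\Psi^+_2 \le 0\) holds pointwise in \(\cD_1 \subset \cD_2\) in the viscosity sense because it holds in all of \(\cD_2\). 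Everything else is formal bookkeeping.
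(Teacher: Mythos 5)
Your argument is correct and is exactly the route the paper intends: Corollary~\ref{C2.2} is stated as a direct consequence of Theorem~\ref{T1.5}, applied to $\Psi^+(\cD_2)$ (resp.\ $\Psi^-(\cD_2)$) as a test function on $\cD_1$, with equality ruled out because $\Psi^{\pm}(\cD_1)$ vanishes on $\cD_1^c$ while $\Psi^{\pm}(\cD_2)$ is nonzero somewhere in $\cD_2\setminus\cD_1$. Your preliminary weak inequality via the Remark is harmless but redundant (the original definition of $\sF^{\pm}(\cD_1,\mu)$ already admits $\Psi^{\pm}(\cD_2)$, and Theorem~\ref{T1.5} delivers the full dichotomy in one application), and, as you note yourself, a single point of $\cD_2\setminus\cD_1$ suffices for the contradiction, so the nonempty-interior discussion is unnecessary.
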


Next we prove continuity of the the principal eigenvalues with respect to the domains.
\begin{theorem}\label{T1.6}
Let $\cD_n$ be a collection of smooth domains converging to a smooth domain $\cD$. Then we have $\Lambda^+(\cD_n)\to \Lambda^+(\cD)$ and $\Lambda^-(\cD_n)\to \Lambda^-(\cD)$ as $n\to\infty$.
\end{theorem}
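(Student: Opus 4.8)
The plan is to prove the two inequalities $\limsup_n \Lambda^+(\cD_n) \le \Lambda^+(\cD)$ and $\liminf_n \Lambda^+(\cD_n) \ge \Lambda^+(\cD)$ separately, using the strict domain monotonicity (Corollary~\ref{C2.2}), the variational characterization of $\Lambda^\pm$ via the families $\sF^\pm$, and the boundary/interior regularity estimates on which Theorem~\ref{T1.1} rests; the argument for $\Lambda^-$ is entirely parallel with inequalities reversed, so I would treat only $\Lambda^+$.

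For the \emph{upper semicontinuity}, $\limsup_n \Lambda^+(\cD_n) \le \Lambda^+(\cD)$, I would fix $\varepsilon>0$ and use that for smooth domains, $\cD_n \to \cD$ implies every compact subset of $\cD$ is eventually contained in $\cD_n$; more carefully, one can produce a slightly shrunken smooth domain $\cD'\Subset\cD$ with $\Lambda^+(\cD')\le \Lambda^+(\cD)+\varepsilon$ (this uses Corollary~\ref{C2.2} together with a limiting argument over an exhaustion $\cD'\uparrow\cD$; alternatively this latter fact is itself a special case of the theorem and should be isolated as a lemma or proved first by a direct extremal-operator comparison). Since $\cD'\Subset\cD_n$ for large $n$, monotonicity gives $\Lambda^+(\cD_n)\le \Lambda^+(\cD')\le \Lambda^+(\cD)+\varepsilon$. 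Hence $\limsup_n \Lambda^+(\cD_n)\le \Lambda^+(\cD)$.

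For the \emph{lower semicontinuity}, $\liminf_n \Lambda^+(\cD_n)\ge \Lambda^+(\cD)$, set $m \df \liminf_n \Lambda^+(\cD_n)$; passing to a subsequence I may assume $\Lambda^+(\cD_n)\to m$. Let $\Psi_n \df \Psi^+_{\cD_n}\in\cC^{2s+}(\cD_n)\cap\cC_0(\cD_n)$ be the principal eigenfunctions from Theorem~\ref{T1.1}, normalized by $\norm{\Psi_n}_{L^\infty(\Rd)}=1$. Since $\cD_n$ is eventually contained in a fixed large ball and $\sM^-_* \Psi_n \le \bI\Psi_n = -\Lambda^+(\cD_n)\Psi_n \le \sM^+_*\Psi_n$ with $\Lambda^+(\cD_n)$ bounded (again by Corollary~\ref{C2.2} applied to a fixed reference domain), the interior regularity estimates for extremal operators elliptic with respect to $\cL_*$ (the estimates cited from \cite{CS09, CS11, RS16} in the preliminary section) give local $\cC^{2s+\alpha}$ bounds, hence (via Arzelà–Ascoli and a diagonal argument) $\Psi_n \to \Psi$ locally uniformly in $\cD$ and in $L^1(\omega)$ along a further subsequence, with $\Lambda^+(\cD_n)\to m$. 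Stability of viscosity solutions under locally uniform convergence of the operator's data then yields $\bI\Psi + m\Psi \le 0$ in $\cD$ with $\Psi\ge 0$ (viscosity subsolution suffices here). The delicate point is to show $\Psi\not\equiv 0$ and $\Psi>0$ in $\cD$: here I would use the boundary regularity / barrier estimates uniform over the smooth family $\cD_n$ (uniformity following from uniform smoothness of $\partial\cD_n$, which is part of the hypothesis ``$\cD_n$ converging to $\cD$''), giving a lower barrier $c\,\dist(\cdot,\cD_n^c)^s \le \Psi_n$ near $\partial\cD_n$ with $c>0$ uniform, together with the interior Harnack/strong maximum principle for $\bI$; this prevents the normalized eigenfunctions from concentrating and escaping, so $\Psi\gneq 0$ in $\cD$. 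Once $\Psi\in\sF^+(\cD,m)$ (or the remark's relaxed version), the variational definition gives $m\le\Lambda^+(\cD)$, completing the proof. The main obstacle is precisely this non-degeneracy step --- ruling out $\Psi\equiv 0$ --- which requires the uniform boundary barrier estimate for the whole family $\{\cD_n\}$; everything else is compactness plus viscosity-solution stability.
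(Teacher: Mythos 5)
Your overall architecture (domain monotonicity plus compactness of normalized eigenfunctions, viscosity stability, and a barrier-based non-degeneracy step) is the same as the paper's, which proves the result in three steps: increasing approximations, decreasing approximations, and a sandwich via Corollary~\ref{C2.2}. But two of your steps have genuine gaps. First, your upper-semicontinuity paragraph rests on producing $\cD'\Subset\cD$ with $\Lambda^+(\cD')\le\Lambda^+(\cD)+\varepsilon$, and you concede this is ``a special case of the theorem''; Corollary~\ref{C2.2} only gives $\Lambda^+(\cD')>\Lambda^+(\cD)$, so this inner-continuity fact cannot be waved through and must itself be proved by the eigenfunction-compactness argument (the paper's Step~1). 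Second, and more seriously, your lower-semicontinuity paragraph ends with the wrong inequality: from $\Psi\in\sF^+(\cD,m)$ the definition of $\Lambda^+$ as a supremum yields $\Lambda^+(\cD)\ge m$, which is not the claimed bound $m\ge\Lambda^+(\cD)$, so as written the lower bound is simply not established. To close it you must upgrade the limit to a genuine eigenpair: by stability the limit satisfies $\bI\Psi+m\Psi=0$ in $\cD$ with $\Psi=0$ in $\cD^c$, Hopf's lemma (Theorem~\ref{T2.4}) gives $\Psi>0$ in $\cD$ once $\Psi\not\equiv 0$, and then the uniqueness part of Theorem~\ref{T1.1} (only $\Lambda^+(\cD)$ admits a positive eigenfunction) forces $m=\Lambda^+(\cD)$; this identification, not the variational definition, is how the paper concludes in Steps~1 and~2.

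The non-degeneracy device you propose is also backwards. A uniform lower barrier $c\,\dist(\cdot,\cD_n^c)^s\le\Psi_n$ with $c>0$ independent of $n$ is not available a priori: the constant in Theorem~\ref{T2.4} is proportional to the interior minimum of $\Psi_n$ on a set away from the boundary, which is exactly the quantity that could degenerate if the normalized eigenfunctions concentrated near $\partial\cD_n$, so invoking it here is circular (and no Harnack inequality is established in the paper). The working mechanism is the \emph{upper} barrier: since $\bI\Psi_n\ge-\Lambda^+_n\ge -C$ and the approximating domains admit a uniform exterior-sphere radius, comparison with the solution $w$ (resp.\ $w_\varepsilon$) of $\bI w=-1$ on $\cD$ (resp.\ on a slightly enlarged domain $\cD_\varepsilon$) gives $0\le\Psi_n\le C\,w$ and $\Psi_n\le C\kappa\,\delta_\varepsilon^s$ uniformly in $n$, as in \eqref{ET2.9B} and \eqref{ET2.9E}. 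Because $\norm{\Psi_n}_{L^\infty}=1$, this pins the maximum of $\Psi_n$ in a fixed compact subset of $\cD$, so the locally uniform limit has sup norm $1$ (hence $\Psi\not\equiv 0$) and the convergence is in fact uniform on all of $\Rd$, which is what the stability argument needs. Replacing your lower-barrier/Harnack sketch by this upper-barrier argument, and adding the eigenvalue-uniqueness step above, turns your outline into the paper's proof; also note the inequality $\bI\Psi+m\Psi\le 0$ makes $\Psi$ a supersolution, not a subsolution as stated.
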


\begin{remark}
From Corollary~\ref{C2.2} and Theorem~\ref{T1.6} it is easy to see that
\begin{align*}
\Lambda^+(\cD) &=\sup\{\lambda\; :\;  \psi\in\cC(\Rd)\cap L^1(\omega)\;, \psi\geq 0,\; \inf_{\cD}\psi \geq 1 , \; \bI \psi + \lambda\psi \leq 0 \; \text{in}\; \cD\},
\\
\Lambda^-(\cD) &=\sup\{\lambda\; :\; \psi\in\cC(\Rd)\cap L^1(\omega)\; ,\psi\leq 0,\; \sup_{\cD}\psi\leq -1 , \; \bI \psi + \lambda\psi \geq 0 \; \text{in}\; \cD\}.
\end{align*}
\end{remark}

Similar to the local case we also have an existence result.
\begin{theorem}\label{T1.7}
Let $\cD$ be a smooth domain and $f\in\cC(\bar\cD)$. Then for every $\mu<\Lambda^+$ there exists a unique solution $u\in\cC(\Rd)$ to the equation
\begin{equation}\label{ET2.10A}
\begin{split}
\bI u + \mu u  &=f\quad \text{in}\; \cD,
\\
u &= 0 \quad \text{in}\; \cD^c.
\end{split}
\end{equation}
\end{theorem}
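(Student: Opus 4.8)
The plan is to prove uniqueness first (which is immediate) and then existence via a continuation/degree argument combined with a priori estimates. For uniqueness, suppose $u_1, u_2$ are two solutions. Applying \cite[Theorem~5.9]{CS09} as in the proof of Corollary~\ref{C2.1}, the difference $w = u_1 - u_2$ satisfies $\bI w + \mu w \geq 0$ in $\cD$ with $w = 0$ on $\cD^c$, and also $\bI(u_2 - u_1) + \mu(u_2 - u_1) \geq 0$; since $\mu < \Lambda^+$ and $\Lambda^+ \leq \Lambda^-$ is not needed here, Theorem~\ref{T1.4}(a) gives $w \leq 0$, and symmetrically $w \geq 0$, so $u_1 = u_2$.

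For existence, I would first reduce to $\mu \leq 0$ by absorbing a large constant: choose $M$ so that $\mu - M < 0$ and rewrite the equation as $\bI u + (\mu - M) u = f - M u$, which makes the operator proper; then solve by a fixed point of $T: u \mapsto v$ where $\bI v + (\mu - M)v = f - M u$. Alternatively, and more robustly, I would run the Leray--Schauder continuation method directly on the family $\bI u + \mu u = t f$, $t \in [0,1]$, or better on $\bI u + (\mu_t) u = f$ with $\mu_t$ interpolating between a very negative value (where solvability is classical, since the operator is then proper and one has existence/uniqueness by Perron's method together with the comparison principle and barriers at $\partial\cD$) and $\mu$. The key inputs are: (i) existence for the proper case — build sub/supersolutions using that $\bI$ dominates $\sM_*^-$ and $\cD$ is smooth, so that boundary barriers of the form $\mathrm{dist}(x,\partial\cD)^s$ exist by \cite{RS16, RS17}, and apply Perron's method of \cite{CS09}; (ii) the regularity $u \in \cC^{2s+}(\cD)\cap\cC_0(\cD)$ from the interior and boundary regularity theory \cite{CS09, CS11, RS16, RS17}, which also gives the compactness needed for Leray--Schauder; (iii) an a priori bound $\|u\|_{L^\infty(\Rd)} \leq C\|f\|_{L^\infty(\bar\cD)}$ uniform along the continuation path.

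The a priori bound is where the hypothesis $\mu < \Lambda^+$ is essential and is the main obstacle. I would argue by contradiction: if no uniform bound holds, take solutions $u_n$ of $\bI u_n + \mu u_n = f_n$ with $\|u_n\|_\infty \to \infty$ and $\|f_n\|_\infty$ bounded; set $v_n = u_n / \|u_n\|_\infty$, so $\|v_n\|_\infty = 1$ and $\bI v_n + \mu v_n = f_n/\|u_n\|_\infty \to 0$. By the regularity estimates $v_n$ is precompact in $\cC(\Rd)$ (using the $L^1(\omega)$ tail control and local $\cC^\alpha$ bounds), so along a subsequence $v_n \to v$ with $\|v\|_\infty = 1$, $\bI v + \mu v = 0$ in $\cD$, $v = 0$ on $\cD^c$, by stability of viscosity solutions. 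Now $v^+$ and $v^-$ are each handled: if $v$ changes sign or is one-signed, Theorem~\ref{T1.2} (the strong simplicity statement, via \eqref{ET2.4B} applied to $v$ or $-v$ at a point where it is positive/negative) forces $v = t\Psi^+$ or $v = t\Psi^-$, hence $\mu \in \{\Lambda^+, \Lambda^-\}$, contradicting $\mu < \Lambda^+ \leq \Lambda^-$; and $v \not\equiv 0$ since $\|v\|_\infty = 1$. This contradiction yields the bound. With uniqueness, the a priori estimate, and compactness in hand, the Leray--Schauder theorem (or the method of continuity, since uniqueness is available at every $\mu_t < \Lambda^+$) produces the solution at $t = 1$, and its regularity places it in $\cC(\Rd)$ with $u \in \cC^{2s+}(\cD)\cap\cC_0(\cD)$, completing the proof.
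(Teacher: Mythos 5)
Your overall strategy---uniqueness from the refined maximum principle, existence from a compact solution operator plus an a priori bound plus a Leray--Schauder type argument---is essentially the paper's: the paper defines $\sG[\psi]$ by $\bI(\sG[\psi])=f-\mu\psi$ and applies the Leray--Schauder (Schaefer) alternative, so your continuation scheme is the same fixed-point argument in different clothes. Where you genuinely diverge is the a priori bound. The paper obtains it by a barrier: using Theorem~\ref{T1.6} it picks a slightly larger domain $\cD_1\Supset\cD$ with $\Lambda^+(\cD_1)>\mu$, and the corresponding eigenfunction $\Psi^+_1$, being bounded away from zero on $\bar\cD$, is scaled into a supersolution so that Corollary~\ref{C2.1} yields $\norm{\varphi}_{L^\infty(\cD)}\leq\norm{\Psi^+_1}_{L^\infty(\Rd)}$ uniformly along the homotopy. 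You instead prove the bound by a normalization/blow-up compactness argument, reducing it to the nonexistence of nontrivial solutions of the homogeneous problem when the zero-order coefficient is below $\Lambda^+$. Both routes are legitimate; yours avoids the domain-continuity theorem but leans on the maximum principle results, while the paper's barrier gives an explicit bound in terms of $\norm{f}_\infty$. There is no circularity in your route, since Theorems~\ref{T1.3} and \ref{T1.4} are established independently of Theorem~\ref{T1.7}.

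Two points need repair. First, in the blow-up step you invoke Theorem~\ref{T1.2} via \eqref{ET2.4B}, but that statement carries the coefficient $\Lambda^+$ (resp.\ $\Lambda^-$): for your limit $v$ with $\bI v+\mu v=0$ and $\mu<\Lambda^+$ one has $\bI v+\Lambda^+v=(\Lambda^+-\mu)v$, which fails the required sign throughout $\cD$ whenever $v$ changes sign, so \eqref{ET2.4B} cannot be applied to $v$ or $-v$ in that case. The correct (and simpler) citation is Theorem~\ref{T1.4}: part (a) gives $v\leq 0$ since $\mu<\Lambda^+$, and part (b) gives $v\geq 0$ since $\mu<\Lambda^+\leq\Lambda^-$, hence $v\equiv 0$, contradicting $\norm{v}_{L^\infty(\Rd)}=1$; alternatively Theorem~\ref{T1.3} asserts the nonexistence outright. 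Second, the parenthetical ``method of continuity, since uniqueness is available'' is not sufficient: uniqueness does not yield openness of the solvable parameter set for a nonlinear problem, so you should stay with the degree/Leray--Schauder alternative, and then you must justify that the degree is nonzero at the start of the homotopy. For your family $\bI u+\mu u=tf$ this requires an extra step at $t=0$ (for instance a further homotopy exploiting the positive $1$-homogeneity of the solution operator, noting that $s\mu<\Lambda^+$ for all $s\in[0,1]$), or you may simply use the paper's homotopy $u=\kappa\sG[u]$, $\kappa\in[0,1]$, observing that $\kappa\mu<\Lambda^+$ so your blow-up bound applies uniformly in $\kappa$. With these corrections the proposal is sound.
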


We also prove an anti-maximum principle.
\begin{theorem}\label{T1.8}
Let $f\in\cC(\bar\cD)$  and $f\lneq 0$.  There exists $\eta>0$, dependent on $f, \cD, \lambda, \Lambda,$ such that for
any solution $u$ of
\begin{equation}\label{ET2.7A}
\begin{split}
\bI u + \mu u &=f\quad \text{in}\; \cD,
\\
u &= 0 \quad \text{in}\; \cD^c,
\end{split}
\end{equation}
with $\mu\in (\Lambda^-, \Lambda^-+\eta)$ we must have $u<0$ in $\cD$.
\end{theorem}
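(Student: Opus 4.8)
The plan is to argue by contradiction, using a compactness (Clément--Peletier type) scheme adapted to the nonlinear nonlocal setting. Suppose the assertion is false. Then there are $\mu_n\downarrow\Lambda^-$ and viscosity solutions $u_n$ of \eqref{ET2.7A} with $\mu=\mu_n$ for which $u_n$ is \emph{not} strictly negative in $\cD$; since $u_n$ is continuous and vanishes outside $\cD$, this just says there is a point of $\cD$ at which $u_n\ge0$. The threshold $\eta$ will be the implicit bound beyond which this scenario cannot occur, and hence depends only on $f,\cD,\lambda,\Lambda$.

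The main step is to force the blow-up $M_n\df\norm{u_n}_{L^\infty(\Rd)}\to\infty$, and the crucial ingredient is a non-solvability statement at the resonant value: \emph{the problem $\bI u+\Lambda^- u=f$ in $\cD$, $u=0$ in $\cD^c$, has no solution when $f\lneq0$.} To see this, let $\delta(x)\df\dist(x,\partial\cD)$; the boundary regularity for $\bI$ from \cite{RS16} gives $\abs{u}\le C\delta^{s}$ in $\cD$, and the Hopf lemma for $\Psi^-$ gives $-\Psi^-\ge c_0\delta^{s}$ in $\cD$, so for $t$ large the function $\varphi\df -u+t\Psi^-$ satisfies $\varphi\le0$ in $\Rd$, $\varphi<0$ in $\cD$, $\varphi=0$ in $\cD^c$. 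Using the elementary inequality $\bI(\varphi_1+\varphi_2)\ge\inf_{L\in\cL}L\varphi_1+\bI\varphi_2$ together with the identity $\inf_{L\in\cL}L(-u)=-\bI u$ and the positive homogeneity of $\bI$, one gets $\bI\varphi\ge-\bI u+t\,\bI\Psi^-$ in $\cD$, and therefore
\[
\bI\varphi+\Lambda^-\varphi\;\ge\;-\bI u+t\,\bI\Psi^-+\Lambda^-(-u+t\Psi^-)\;=\;-(\bI u+\Lambda^- u)\;=\;-f\;\gneq\;0 .
\]
By the ``resp.''\ statement in Theorem~\ref{T1.5} this forces $\varphi=c\Psi^-$ for some $c>0$, whence $\bI\varphi+\Lambda^-\varphi=c(\bI\Psi^-+\Lambda^-\Psi^-)=0$, a contradiction. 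Now if $M_n$ stayed bounded along a subsequence, the interior regularity estimates and the boundary estimates of \cite{RS16} would make $\{u_n\}$ precompact in $\cC(\Rd)$, and a subsequential limit would, by stability of viscosity solutions \cite{CS09}, solve $\bI u+\Lambda^- u=f$ in $\cD$ with $u=0$ in $\cD^c$ --- impossible. Hence $M_n\to\infty$.

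Now renormalize: put $v_n\df u_n/M_n$, so $\norm{v_n}_\infty=1$ and $\bI v_n+\mu_n v_n=f/M_n\to0$ uniformly. The same compactness yields a subsequential limit $v$ with $\norm{v}_\infty=1$ solving $\bI v+\Lambda^- v=0$ in $\cD$, $v=0$ in $\cD^c$; by Theorem~\ref{T1.2} (its statement for $(\Psi^-,\Lambda^-)$) we get $v=t\Psi^-$, hence $v=\pm\Psi^-/\norm{\Psi^-}_\infty$. Since the boundary estimates of \cite{RS16} are uniform in $n$, the convergence can be taken in the form $v_n/\delta^{s}\to v/\delta^{s}$ uniformly on $\bar\cD$, so the interior sign of $v$ is inherited by $v_n$ for $n$ large. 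If $v=-\Psi^-/\norm{\Psi^-}_\infty>0$ in $\cD$, then for large $n$ one has $u_n>0$ in $\cD$ and $u_n\ge0$ in $\Rd$, while $\bI u_n+\mu_n u_n=f\le0$ in $\cD$; Theorem~\ref{T1.5} then gives $\mu_n\le\Lambda^+\le\Lambda^-$, contradicting $\mu_n>\Lambda^-$. If instead $v=\Psi^-/\norm{\Psi^-}_\infty<0$ in $\cD$, then $u_n<0$ in $\cD$ for large $n$, contradicting the choice of $u_n$. In either case we reach a contradiction, which proves the theorem.

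The decisive obstacle is the non-solvability of $\bI u+\Lambda^- u=f$ for $f\lneq0$, equivalently the blow-up $M_n\to\infty$: it is exactly here that the sublinearity and positive homogeneity of $\bI$, the ``resp.''\ half of Theorem~\ref{T1.5}, and the fine boundary regularity of \cite{RS16} must be combined. Once this and the attendant compactness estimates are available, the renormalization argument is routine.
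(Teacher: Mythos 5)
Your argument is correct, and its overall architecture --- contradiction, dichotomy on boundedness of $\norm{u_n}_{L^\infty}$, renormalization $v_n=u_n/\norm{u_n}_{L^\infty}$, identification of the limit as $\pm\Psi^-/\norm{\Psi^-}_{L^\infty}$ via Theorem~\ref{T1.2}, transfer of its sign to $u_n$ through the uniform $\delta^s$-weighted convergence, and the final contradiction with $\mu_n>\Lambda^-$ --- is the same as the paper's. Where you genuinely diverge is the bounded case: you isolate a nonexistence statement at resonance ($\bI u+\Lambda^- u=f$ in $\cD$, $u=0$ in $\cD^c$ has no solution when $f\lneq 0$) and prove it by the auxiliary function $\varphi=-u+t\Psi^-$ with $t$ large (using $\abs{u}\leq C\delta^s$ and the Hopf bound $-\Psi^-\geq c_0\delta^s$), the sublinearity and positive homogeneity of $\bI$ giving $\bI\varphi+\Lambda^-\varphi\geq -f\gneq 0$, and then the ``resp.''\ half of Theorem~\ref{T1.5} forcing $\varphi=c\Psi^-$, a contradiction; the paper instead analyzes the limit of a bounded subsequence directly by a sign dichotomy (if it is negative somewhere, Theorem~\ref{T1.2} makes it a multiple of $\Psi^-$, contradicting $f\neq 0$; otherwise Hopf's lemma makes it positive and Theorem~\ref{T1.5} makes it a multiple of $\Psi^+$ with $\Lambda^+=\Lambda^-$, again contradicting $f\neq 0$). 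Both routes are sound; yours buys a quotable non-solvability lemma at $\Lambda^-$, the paper's avoids the algebra with $\varphi$. Two points to tighten: since $f$ is only continuous, $u$ need not be classical (Theorem~\ref{S-Thm1} requires H\"older data), so the inequality $\bI(-u+t\Psi^-)\geq -\bI u+t\,\bI\Psi^-$ should be justified at the viscosity level, e.g.\ via \cite[Theorem~5.9]{CS09} applied to the classical subsolution $t\Psi^-$ and the viscosity supersolution $u$ --- the same tool the paper uses in Corollary~\ref{C2.1} and Theorem~\ref{T3.3}; and the sign inheritance from $v$ to $v_n$ requires, besides the uniform convergence of $v_n/\delta^s$, the Hopf lower bound $\abs{v}\geq\kappa\delta^s$ in $\cD$, which you do have from $-\Psi^-\geq c_0\delta^s$ but should be stated explicitly.
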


Anti-maximum principle was first proved in \cite{CP} for linear elliptic PDE. Now there are several extensions of anti-maximum principal
available for elliptic PDE. Let us also mention \cite{A08} where anti-maximum principle was obtained for fully nonlinear elliptic operators.
Very recently in \cite{BL17b}, a weak anti-maximum principle has been established for linear nonlocal schr\"{o}dinger operator.

Finally, we would  address the existence of non-trivial solution of the equation
\begin{equation}\label{E1.10}
\sM^+_* u = -\mu u + f (\mu, u) \quad \text{in}\; \cD, \quad\text{and}\quad u=0\quad \text{in}\; \cD^c.
\end{equation}
We assume that $f$ is continuous, $f(\mu, t)=\sorder(\abs{t})$ as $t\to 0$ uniformly in $\mu\in\RR$. Note that by definition $(\mu, 0)$ is a solution to \eqref{E1.10} for any $\mu\in\RR$.
These solutions are known as trivial solutions. Let $\cS$ be the collection of all non-trivial solution to \eqref{E1.10}. Recall that a pair
$(\mu, 0)$ is said to be a \textit{bifurcation point} if every neighbourhood of $(\mu, 0)$ in $\RR\times\cC_0(\cD)$ intersects $\cS$.

We establish the following bifurcation result (compare with \cite[Theorem~1.3]{BEQ})
\begin{theorem}\label{T1.9}
The pairs $(\Lambda^+, 0)$ and $(\Lambda^-, 0)$ are bifurcation points of the solutions to \eqref{E1.10}. Moreover, if $\Lambda^+<\Lambda^-$ then $(\Lambda^+, 0)$ (resp., $(\Lambda^-, 0)$)
is a bifurcation point of positive (resp., negative) solutions. Furthermore, the connected component of $\cS$ containing $(\Lambda^+, 0)$ (resp., $(\Lambda^-, 0)$) in its closure is either unbounded or contains a pair
$(\hat\mu, 0)$ for some $\hat\mu\neq\Lambda^+$ (resp., $\hat\mu\neq \Lambda^-$).
\end{theorem}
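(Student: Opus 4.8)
The plan is to recast \eqref{E1.10} as a fixed-point equation to which Leray--Schauder degree theory and a Rabinowitz/Dancer-type global bifurcation theorem apply, in the spirit of \cite[Theorem~1.3]{BEQ} but fed by the nonlocal tools established above. Since $\sM^+_*=\sup_{L\in\cL_*}L$ is an operator of the form $\bI$ (with $\cL=\cL_*$), all of Theorems~\ref{T1.1}--\ref{T1.7} hold for it; below $\Lambda^\pm,\Psi^\pm$ and $\varepsilon_0$ refer to $\sM^+_*$ and $\cD$. Fix $\mu_0<\Lambda^+$ (say $\mu_0=0$) and, invoking Theorem~\ref{T1.7}, let $\cK\colon\cC(\bar\cD)\to\cC_0(\cD)$ send $g$ to the unique solution of $\sM^+_* u+\mu_0 u=g$ in $\cD$, $u=0$ in $\cD^c$. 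By Corollary~\ref{C2.1}, the positive $1$-homogeneity of $\sM^+_*$, and the regularity estimates of Section~\ref{S-prelim} (a uniform $\cC^\gamma(\Rd)$ bound for $\cK g$ controlled by $\norm{g}_\infty$, hence precompactness in $\cC_0(\cD)$), the operator $\cK$ is continuous, compact, order reversing, positively $1$-homogeneous and injective. Thus \eqref{E1.10} is equivalent to
\[
u\;=\;T(\mu,u)\;\df\;\cK\bigl((\mu_0-\mu)\,u+f(\mu,u)\bigr),\qquad(\mu,u)\in\RR\times\cC_0(\cD),
\]
with $T$ compact, $T(\mu,0)=0$, and, since $f(\mu,t)=\sorder(\abs t)$ uniformly, $T(\mu,u)=\cK\bigl((\mu_0-\mu)u\bigr)+\sorder(\norm u)$ as $u\to0$, uniformly for $\mu$ bounded.

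Next I would rule out small nontrivial solutions away from $\Lambda^\pm$ by a blow-up argument: if $u_n\to0$, $u_n\neq0$, $u_n=T(\mu,u_n)$, then $v_n\df u_n/\norm{u_n}$ satisfies $v_n=\cK\bigl((\mu_0-\mu)v_n+f(\mu,u_n)/\norm{u_n}\bigr)$ with $\norm{f(\mu,u_n)}_\infty/\norm{u_n}\to0$; compactness and continuity of $\cK$ give, along a subsequence, $v_n\to v$ with $\norm v=1$ and $\sM^+_* v+\mu v=0$ in $\cD$, $v=0$ in $\cD^c$, contradicting Theorem~\ref{T1.3} unless $\mu\in\{\Lambda^+,\Lambda^-\}$ (within $\mu<\Lambda^-+\varepsilon_0$). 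Hence for admissible $\mu\notin\{\Lambda^+,\Lambda^-\}$ the degree $d(\mu)\df\deg\bigl(I-T(\mu,\cdot),B_\rho,0\bigr)$ is well defined for all small $\rho$ and locally constant in $\mu$. For $\mu<\Lambda^+$ the homotopy $h(t,u)=t\,T(\mu,u)$, $t\in[0,1]$, is admissible on a small sphere: a fixed point of $h(t,\cdot)$ with $t>0$ solves $\sM^+_* u+\mu_t u=t\,f(\mu,u)$ with $\mu_t\df(1-t)\mu_0+t\mu<\Lambda^+$, so the blow-up argument for the parameter $\mu_t$ applies; therefore $d(\mu)=\deg(I,B_\rho,0)=1$ for every $\mu<\Lambda^+$.

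The core of the argument is to show that the index changes at $\Lambda^+$ and at $\Lambda^-$; the classical odd-multiplicity bookkeeping is not available because $\sM^+_*$ is merely convex and $1$-homogeneous, so I would work with the fixed-point index in the cones $P=\{u\in\cC_0(\cD):u\ge0\}$ and $-P$ and use the Krein--Rutman structure together with the simplicity results of Theorems~\ref{T1.1}--\ref{T1.2}. The homogeneous leading part $u\mapsto\cK\bigl((\mu_0-\mu)u\bigr)$ maps $P$ into $P$ (because $\mu>\mu_0$ and $\cK$ is order reversing) and acts on $\Psi^+$ by the factor $\tfrac{\mu-\mu_0}{\Lambda^+-\mu_0}$, which is $<1$ for $\mu<\Lambda^+$ and $>1$ for $\mu>\Lambda^+$. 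By the standard cone-index lemmas -- absence of fixed points on $\partial B_\rho\cap P$, plus a push in the $\Psi^+$-direction for $\mu>\Lambda^+$ supplied by geometric simplicity and the (nonlocal) strong maximum principle underlying Theorem~\ref{T1.4} -- the index of $T(\mu,\cdot)$ in $P$ equals $1$ for $\mu<\Lambda^+$ and $0$ for $\mu$ in a right neighbourhood of $\Lambda^+$; symmetrically, the index in $-P$ equals $1$ for $\mu<\Lambda^-$ and $0$ for $\mu$ just above $\Lambda^-$. A global bifurcation theorem in cones (Rabinowitz/Dancer) then yields a connected component $\cC^+\subset\overline{\cS}$ through $(\Lambda^+,0)$ lying in $\overline P\times\RR$ and one $\cC^-$ through $(\Lambda^-,0)$ lying in $-\overline P\times\RR$, each either unbounded in $\RR\times\cC_0(\cD)$ or containing a further pair $(\hat\mu,0)$ with $\hat\mu\neq\Lambda^+$ (resp.\ $\hat\mu\neq\Lambda^-$); in particular $(\Lambda^+,0)$ and $(\Lambda^-,0)$ are bifurcation points. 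Finally, a solution $u\gneq0$ on $\cC^+$ near $(\Lambda^+,0)$ satisfies $\sM^+_* u+(\mu-c(x))u=0$ with $\norm{c}_\infty$ small and $\mu$ near $\Lambda^+>0$, so $\mu-c>0$ and the strong maximum principle gives $u>0$ in $\cD$; hence, when $\Lambda^+<\Lambda^-$, $(\Lambda^+,0)$ is a bifurcation point of positive solutions, and likewise $(\Lambda^-,0)$ of negative ones.

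The main obstacle is exactly the cone-index computation at $\mu=\Lambda^+$ and $\mu=\Lambda^-$: one has to turn the Krein--Rutman/simplicity statements into the quantitative ``expansion along $\Psi^\pm$'' that forces the index to drop to $0$ once $\mu$ overtakes the eigenvalue, and one has to control the remainder in $T(\mu,u)=\cK\bigl((\mu_0-\mu)u\bigr)+\sorder(\norm u)$ uniformly over $\partial B_\rho\cap P$ as $\rho\to0$ so that every homotopy used stays admissible. The remaining steps are routine degree-theoretic bookkeeping on top of the comparison and regularity results already in hand.
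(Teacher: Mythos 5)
There is a genuine gap, and it sits exactly where you place your ``main obstacle'': the index change at $\Lambda^+$ and $\Lambda^-$. Your plan is to compute a fixed-point index of $T(\mu,\cdot)=\cK\bigl((\mu_0-\mu)\,\cdot+f(\mu,\cdot)\bigr)$ relative to the cones $P=\{u\in\cC_0(\cD):u\geq 0\}$ and $-P$, but this index is not even defined: since $f(\mu,u)$ carries no sign, $(\mu_0-\mu)u+f(\mu,u)$ need not be nonpositive for $u\in P$, so $T(\mu,\cdot)$ does not map $P$ into $P$ (only the homogeneous leading part does), and the cone fixed-point index machinery, the ``push in the $\Psi^+$-direction'' computation, and the Dancer-type conclusion that the global branch $\cC^+$ lies in $\overline{P}\times\RR$ are all unsupported. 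The last claim is moreover stronger than what is true: away from the bifurcation point nothing forces solutions on the component to keep a sign, and \cref{T1.9} only asserts signedness near $(\Lambda^\pm,0)$ when $\Lambda^+<\Lambda^-$ (which is proved a posteriori by a blow-up/simplicity argument, not by cone invariance). So the proposal establishes the easy degree value ($1$ for $\mu<\Lambda^+$, essentially as in the paper) and the nonexistence of small solutions off the spectrum, but it does not contain a proof of the index jump, which is the whole content of the theorem.

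The paper's route avoids cones entirely: it works with the Leray--Schauder degree in all of $\cC_0(\cD)$ and proves (Lemma~\ref{L3.5}) that $\Deg_{\cC_0(\cD)}(I-\mu\cI^+_\lambda,\sB(0,r),0)$ equals $1$ for $\mu<\Lambda^+(\lambda)$, $0$ for $\Lambda^+(\lambda)<\mu<\Lambda^-(\lambda)$, and $-1$ for $\Lambda^-(\lambda)<\mu<\Lambda_2(\lambda)$. The value $0$ comes from a solvability contradiction: a nonzero degree would let one solve $u-\mu\cI^+_\lambda u=h$ with $\sM^+_\lambda h=-\kappa_1$, whence $\sM^+_\lambda u+\mu u\leq-\kappa_1$, so $u\geq 0$ by \cref{T1.4}, $u>0$ by Hopf's lemma (\cref{T2.4}), and then \cref{T1.5} forces $\mu\leq\Lambda^+$, a contradiction. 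The value $-1$ --- which your scheme never produces --- is obtained by homotopy along the ellipticity parameter $\lambda$ in the family $\sM^+_\lambda$ toward the linear endpoint, where the classical Leray--Schauder index at a simple principal eigenvalue applies; the admissibility of that homotopy is precisely what Lemma~\ref{L3.4} supplies (continuity of $\lambda\mapsto\Lambda^\pm(\lambda)$ plus a uniform solution-free interval $(\Lambda^-(\lambda),\Lambda^-(\lambda)+\eta]$). With the jumps $1\to 0$ at $\Lambda^+$ and $0\to -1$ at $\Lambda^-$ in hand, the global alternative and the signedness of the branches near $(\Lambda^\pm,0)$ follow as in \cite{BEQ}. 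To repair your argument you would either need structural sign assumptions on $f$ making $T(\mu,\cdot)$ cone-preserving, or replace the cone indices by this whole-space degree computation, including some analogue of the homotopy in $\lambda$ (or another device) to detect the change of index at $\Lambda^-$.
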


Before we conclude this section let us remark that though we have developed the results for certain nonlinear operators, all the results continue 
to hold if we assume $\bI$ is convex and elliptic with respect to $\cL_*$. The only omittance from the results would be $\cC^{2s+}$ property (Theorem~\ref{S-Thm1})
and rest of the statement in the theorems stay as it is.

\subsection{Motivation for our problem}\label{S-moti}
 Sup-type (or inf-type) operators are very common in control theory. In particular, $\bI$ in \eqref{Opt} appears
in controlled eigenvalue problems \cite{BB10}. It is generally expected that, at least when $\cL$ is sufficiently large,
$$\Lambda^+=\inf_{L\in\cL} \lambda(L, \cD), \quad \text{and}\quad \Lambda^-=\sup_{L\in\cL}\, \lambda(L, \cD),$$
where $\lambda(L, \cD)$ denotes the principal (Dirichlet) eigenvalue of the linear operator $L$ in $\cD$. When $L$ is given by a (local) non-degenerate elliptic
operator the above relations are well known \cite{BB10, BEQ}. Eigenvalue problems appear in various places of control theory-- for example, in risk-sensitive
control, reliability theory, large deviation theory etc. Let $(\tilde\cD, \tilde\sF, \Prob)$ be a probability space and $X^L$ be the L\'evy process
defined on $(\tilde\cD, \tilde\sF, \Prob)$ with
generator being $L$. It is known that \cite[Corollary~4.1]{BL17b}
$$\lambda(L, \cD)=-\lim_{T\to\infty} \, \frac{1}{T}\log \Prob(\uptau^L>T),$$
where $\uptau^L$ denotes the exit time of the process $X^L$ from the domain $\cD$. Thus, we have the following relations from above
$$\Lambda^+=\inf_{L\in\cL}\left[ -\lim_{T\to\infty} \, \frac{1}{T}\log \Prob(\uptau^L>T)\right], \quad \text{and}\; 
\Lambda^-=\sup_{L\in\cL}\, \left[ -\lim_{T\to\infty} \, \frac{1}{T}\log \Prob(\uptau^L>T)\right].$$
Therefore, $\Lambda^\pm$ represent the optimal value of the \textit{long time exit rates} of the processes associated to the set $\cL$.

\section{Preliminaries}\label{S-prelim}

In this section we review some results that will be essential in proof of our main results.
We start by stating the comparison principle from \cite{CS09} (see also \cite[Corollary~2.9]{CL12}). Let us remark that the
comparison result in \cite{CS09} is obtained for bounded solutions and can easily be extended to the class $L^1(\omega)$. This 
can be done by truncating the functions and using stability of viscosity solution. See \cite[Corollary~2.9]{CL12} for more details.

\begin{theorem}[Comparison theorem]\label{CS-Thm1}
Let $\bI$ be any operator satisfying ellipticity condition with respect to $\cL_0$ and $u, v$ be two functions in $L^1(\omega)$. Suppose that
\begin{itemize}
\item[(i)] $u$ is upper semicontinuous and $v$ is lower semicontinuous in $\bar\Omega$. 
\item[(ii)] $\bI u\geq f$ and $\bI v\leq f$ in $\Omega$, and $f$ is continuous in $\Omega$.
\item[(iii)] $u\leq v$ in $\Omega^c$.
\end{itemize}
Then we have $u\leq v$ in $\Rd$.
\end{theorem}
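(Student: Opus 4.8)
This is the nonlocal comparison principle of \cite{CS09} (mildly extended from bounded solutions to the class $L^1(\omega)$; cf.\ \cite[Corollary~2.9]{CL12}), and the plan is to prove it by the doubling-of-variables method adapted to nonlocal equations. First I would reduce to $u,v$ bounded: truncating $u$ and $v$ at levels $\pm N$ yields a sub/supersolution pair of a slightly modified equation, and one recovers the general case by letting $N\to\infty$, using stability of viscosity solutions together with $u,v\in L^1(\omega)$. So assume $u,v$ bounded and suppose, towards a contradiction, that $M\df\sup_{\Rd}(u-v)>0$. Because $u-v$ is upper semicontinuous on the compact set $\bar\Omega$ while $u-v\leq0$ on $\Omega^c\supset\partial\Omega$, the value $M$ is attained at an interior point $x_0\in\Omega$.

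Next I would double the variables. For $\varepsilon>0$ let $\Phi_\varepsilon(x,y)=u(x)-v(y)-\tfrac{1}{2\varepsilon}\abs{x-y}^2$ and pick $(x_\varepsilon,y_\varepsilon)$ maximising $\Phi_\varepsilon$ over $\bar\Omega\times\bar\Omega$ (the maximum exists by upper semicontinuity). The usual penalisation lemma gives $\tfrac{1}{2\varepsilon}\abs{x_\varepsilon-y_\varepsilon}^2\to0$, $(x_\varepsilon,y_\varepsilon)\to(x_0,x_0)$, $u(x_\varepsilon)-v(y_\varepsilon)\to M$, so $x_\varepsilon,y_\varepsilon\in\Omega$ for $\varepsilon$ small. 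The paraboloid $\varphi(x)=\tfrac{1}{2\varepsilon}\abs{x-y_\varepsilon}^2+c$ touches $u$ from above at $x_\varepsilon$, and $\psi(y)=-\tfrac{1}{2\varepsilon}\abs{x_\varepsilon-y}^2+c'$ touches $v$ from below at $y_\varepsilon$. Invoking the definition of viscosity sub/supersolution of a nonlocal operator (replace the function by the test function inside $B_\delta$, keep it unchanged outside; see \cite{CS09}), one obtains, for every small $\delta>0$, $\bI u_\delta(x_\varepsilon)\geq f(x_\varepsilon)$ and $\bI v_\delta(y_\varepsilon)\leq f(y_\varepsilon)$, where $u_\delta=\varphi$ on $B_\delta(x_\varepsilon)$ and $u_\delta=u$ elsewhere, and $v_\delta$ is built analogously at $y_\varepsilon$; since $u_\delta$, $v_\delta$ are now $C^2$ at the relevant points, these are pointwise inequalities for the integral operator.

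Now subtract. Using that $\bI$ is elliptic with respect to $\cL_0$ — and, after reconciling the two base points by the nonlocal analogue of the Jensen--Ishii lemma of \cite{CS09} (for the operators of the present paper one may instead use directly that the kernels of $\cL_0$ are translation invariant and manipulate suprema) — the difference $\bI u_\delta(x_\varepsilon)-\bI v_\delta(y_\varepsilon)$ is bounded above by $\sup_{k}\tfrac12\int_{\Rd}\Theta(z)\,k(z)\abs{z}^{-d-2s}\,\D z$, the supremum running over even kernels with $\lambda\leq k\leq\Lambda$, where $\Theta(z)\df\bigl(u_\delta(x_\varepsilon+z)+u_\delta(x_\varepsilon-z)-2u_\delta(x_\varepsilon)\bigr)-\bigl(v_\delta(y_\varepsilon+z)+v_\delta(y_\varepsilon-z)-2v_\delta(y_\varepsilon)\bigr)$. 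I would split the integral at $\abs{z}=\delta$. On $\{\abs{z}<\delta\}$ both second differences come from the paraboloids, equal to $\tfrac1\varepsilon\abs{z}^2$ and $-\tfrac1\varepsilon\abs{z}^2$, so $\Theta(z)=\tfrac2\varepsilon\abs{z}^2$ and this piece is at most $C\Lambda\varepsilon^{-1}\int_{\abs{z}<\delta}\abs{z}^{2-d-2s}\,\D z=C'\Lambda\varepsilon^{-1}\delta^{2-2s}$, which is finite precisely because $s<1$ and vanishes if $\delta=\delta_\varepsilon$ is coupled so that $\delta_\varepsilon^{2-2s}/\varepsilon\to0$. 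On $\{\abs{z}\geq\delta\}$ one has $u_\delta=u$ and $v_\delta=v$, so $\Theta(z)=\bigl(u(x_\varepsilon+z)-v(y_\varepsilon+z)\bigr)+\bigl(u(x_\varepsilon-z)-v(y_\varepsilon-z)\bigr)-2\bigl(u(x_\varepsilon)-v(y_\varepsilon)\bigr)$; maximality of $\Phi_\varepsilon$ forces $\Theta(z)\leq0$ whenever $x_\varepsilon\pm z,\,y_\varepsilon\pm z\in\bar\Omega$, a routine case analysis (the boundary layer where the four shifted points straddle $\partial\Omega$ has measure $O(\abs{x_\varepsilon-y_\varepsilon})\to0$) handles the remaining bounded range, and for $\abs{z}>R$ with $R$ a fixed multiple of $\mathrm{diam}\,\Omega$ all four points lie in $\Omega^c$, where $u\leq v$, giving $\int_{\abs{z}>R}\Theta(z)\,k(z)\abs{z}^{-d-2s}\,\D z\leq-2\bigl(u(x_\varepsilon)-v(y_\varepsilon)\bigr)\int_{\abs{z}>R}k(z)\abs{z}^{-d-2s}\,\D z+o_\varepsilon(1)$, the error coming from $\abs{x_\varepsilon-y_\varepsilon}\to0$ by dominated convergence. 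Altogether $f(x_\varepsilon)-f(y_\varepsilon)\leq\bI u_\delta(x_\varepsilon)-\bI v_\delta(y_\varepsilon)\leq -c_1+o_\varepsilon(1)$ for some $c_1>0$ (depending on $M$, $\lambda$, $d$, $s$ and $\mathrm{diam}\,\Omega$); letting $\varepsilon\to0$ with the coupled $\delta_\varepsilon$ and using continuity of $f$ at $x_0$ yields $0\leq-c_1<0$, a contradiction. Hence $M\leq0$, i.e.\ $u\leq v$ in $\Rd$.

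The step I expect to be the main obstacle is the estimate on $\{\abs{z}\geq\delta\}$: reconciling two distinct base points $x_\varepsilon\neq y_\varepsilon$ inside the singular integral — especially those $z$ for which $x_\varepsilon+z$ and $y_\varepsilon+z$ lie on opposite sides of $\partial\Omega$ — and squeezing a quantitative negative sign out of the exterior inequality $u\leq v$ on $\Omega^c$. This is the technical core of \cite{CS09}; it is lighter for the operators here because the kernels of $\cL_0$ are translation invariant. A subsidiary point is that the quadratic penalisation is admissible only because $s\in(0,1)$ makes $\int_{\abs{z}<1}\abs{z}^{2}\abs{z}^{-d-2s}\,\D z$ converge, so the restriction $s\in(0,1)$ built into the model is exactly what allows this argument to go through with a quadratic.
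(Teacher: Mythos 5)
The paper itself does not prove this theorem: it is quoted from \cite{CS09}, with only the remark that the bounded-solution statement extends to $L^1(\omega)$ by truncation plus stability of viscosity solutions, as in \cite[Corollary~2.9]{CL12}. Measured against the proof the paper relies on, your argument is a genuinely different route. In \cite{CS09} one first shows (their Theorem~5.9, via sup-/inf-convolution regularization at a \emph{single} base point, using only the same-point ellipticity of $\bI$) that $w=u-v$ is a viscosity subsolution of $\sM^+_0 w\geq 0$, and then concludes with a one-point maximum principle: at an interior global maximum $M>0$ of $w$ all second differences are nonpositive, and for $\abs{z}$ larger than the diameter of $\Omega$ they are at most $-2M$, so every $L\in\cL_0$ gives $Lw\leq -c\lambda M<0$, a contradiction; no doubling of variables and no coupling of parameters is needed. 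Your proposal instead runs the doubling-of-variables/penalization scheme (closer to Barles--Imbert than to \cite{CS09}); the decisive mechanism — quantitative negativity of order $\lambda M$ coming from the tail $\abs{z}>R$ where $u\leq v$, with the quadratic penalization admissible because $s<1$ — is the same, and the scheme is sound, but you pay for it with the two-base-point bookkeeping that the convolution route avoids entirely.

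Two caveats on your version. First, the subtraction at $x_\varepsilon\neq y_\varepsilon$ is not justified by ellipticity with respect to $\cL_0$ alone, which is a same-point condition, and there is no Jensen--Ishii-type lemma in \cite{CS09} to invoke; what saves you is precisely the translation invariance of the kernels, which holds for the operators of this paper but makes your proof narrower than the stated theorem (``any operator elliptic with respect to $\cL_0$''), whereas the \cite{CS09} route covers it. Second, in the range where all four points lie in $\Omega^c$ the error $v(x_\varepsilon+z)-v(y_\varepsilon+z)$ is controlled by continuity of translations in $L^1_{\mathrm{loc}}$ (after truncation $v$ is bounded but need not be continuous outside $\Omega$), not by dominated convergence, since there is no pointwise convergence to appeal to; and the initial truncation step is exactly the content of \cite[Corollary~2.9]{CL12} and should be carried out with the same care (the truncation introduces a small right-hand-side error, harmless for the contradiction argument because the deficit $c_1$ is uniform once $M>0$ is fixed).
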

Using the comparison theorem above and Perron's method one can establish the existence result \cite{CS09, M17}.

\begin{theorem}\label{CS-Thm2}
Let $f\in\cC(\bar\Omega)$. Then there exists a unique viscosity solution $u\in\cC(\Rd)$ satisfying 
$$\bI u=f \quad \mbox{in}\; \Omega, \quad \text{and}\quad u=0\quad  \text{in}\; \Omega^c.$$
\end{theorem}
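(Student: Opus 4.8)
The plan is to obtain uniqueness at once from the comparison theorem and existence by Perron's method — the two ingredients singled out in the discussion above — in the spirit of \cite{CS09,M17}. For uniqueness, if $u_1,u_2\in\cC(\Rd)$ both solve the problem, then each is simultaneously upper and lower semicontinuous on $\bar\Omega$, satisfies both $\bI u_i\geq f$ and $\bI u_i\leq f$ in $\Omega$, and vanishes on $\Omega^c$; applying Theorem~\ref{CS-Thm1} first with $(u,v)=(u_1,u_2)$ and then with $(u,v)=(u_2,u_1)$ yields $u_1\leq u_2$ and $u_2\leq u_1$ on $\Rd$, hence $u_1\equiv u_2$.

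For existence I would first fix an ordered pair of global barriers. Let $\Theta\in\cC(\Rd)$ satisfy $\Theta>0$ in $\Omega$, $\Theta=0$ on $\Omega^c$, and $\sM^+_*\Theta\leq-1$ in $\Omega$ in the viscosity sense. On a smooth domain such a function is standard: one takes a suitable multiple of $\dist(\cdot,\Omega^c)^{\beta}$ for an appropriate $\beta\in(0,2s)$, smoothly truncated away from $\partial\Omega$, for which the two-sided kernel bound $\lambda\leq k\leq\Lambda$ gives $\sM^+_*\Theta(x)\leq-c\,\dist(x,\Omega^c)^{\beta-2s}$ near $\partial\Omega$ and a strictly negative constant in the interior; it is exactly here that the reduction of the ellipticity class from $\cL_0$ to $\cL_*$ is needed, since for $\cL_0$ the fine boundary behaviour may fail \cite[Section~2.1]{RS16}. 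Now put $M\df\max_{\bar\Omega}\abs{f}$, $\overline{u}\df M\Theta$ and $\underline{u}\df-M\Theta$. Using that $\bI$ is positively one-homogeneous with $\sM^-_*\leq\bI\leq\sM^+_*$ and that $\sM^-_*(-M\Theta)=-M\,\sM^+_*\Theta$, one gets in $\Omega$ the viscosity inequalities $\bI\overline u=M\,\bI\Theta\leq M\,\sM^+_*\Theta\leq-M\leq f$ and $\bI\underline u\geq\sM^-_*(-M\Theta)=-M\,\sM^+_*\Theta\geq M\geq f$, while $\underline u\leq0\leq\overline u$ on $\Rd$ and $\underline u=\overline u=0$ on $\Omega^c$. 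Thus $\overline u$ is a supersolution, $\underline u$ a subsolution, and $\underline u\leq\overline u$.

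Next I would run Perron's scheme with
\[
u(x)\;\df\;\sup\bigl\{\,v(x)\;:\;v\in\mathrm{USC}(\Rd),\ \underline u\leq v\leq\overline u\ \text{on}\ \Rd,\ v=0\ \text{on}\ \Omega^c,\ \bI v\geq f\ \text{in}\ \Omega\,\bigr\}.
\]
The family is nonempty since $\underline u$ lies in it, so $\underline u\leq u_*\leq u\leq u^*\leq\overline u$, and in particular $u^*$ and $u_*$ vanish on $\Omega^c$. By stability of viscosity subsolutions under suprema, the upper envelope $u^*$ is a subsolution of $\bI w=f$ in $\Omega$; and the standard bump-and-glue argument of Perron's method for nonlocal equations — which uses only the convexity of $\bI$, its ellipticity with respect to $\cL_0$, and Theorem~\ref{CS-Thm1} — shows that the lower envelope $u_*$ is a supersolution of $\bI w=f$ in $\Omega$. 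Applying Theorem~\ref{CS-Thm1} to the pair $(u^*,u_*)$ then forces $u^*\leq u_*$ on $\Rd$, and since $u_*\leq u^*$ always, $u=u^*=u_*$ is a continuous viscosity solution of $\bI u=f$ in $\Omega$ with $u=0$ on $\Omega^c$; the continuity across $\partial\Omega$ (so that $u\in\cC(\Rd)$) is already forced by the squeeze $\underline u\leq u\leq\overline u$ and the continuity of the barriers at $\partial\Omega$. The interior $\cC^{2s+}$ regularity is not asserted in this statement and is not needed.

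The genuinely nonroutine step is the barrier construction: producing the global, boundary-vanishing supersolution $\Theta$ — equivalently, local barriers at each point of $\partial\Omega$ via the uniform exterior ball condition of the smooth domain — for the extremal operator of class $\cL_*$. Once these are available, the remaining pieces (uniqueness, sup-stability, the bump argument, and the concluding comparison) are the by-now standard Perron machinery for nonlocal equations, which is why the statement is attributed to \cite{CS09,M17}.
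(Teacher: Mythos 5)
Your route—uniqueness by applying Theorem~\ref{CS-Thm1} twice, existence by Perron's method squeezed between sub/supersolutions—is exactly the route the paper intends: the paper gives no details and simply invokes the comparison theorem together with \cite{CS09, M17}. The uniqueness half and the Perron machinery itself (sup-stability of subsolutions, the bump argument, the final comparison between the envelopes $u^*$ and $u_*$, and the boundary squeeze) are standard and fine as you set them up.

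The soft spot is precisely the step you flag as nonroutine: the global barrier $\Theta$ with $\sM^+_*\Theta\le-1$ in $\cD$, $\Theta>0$ in $\cD$, $\Theta=0$ on $\cD^c$. As written the construction does not hold up. First, the exponent range is wrong: the one-sided bound $\sM^+_*\bigl(\delta^\beta \Ind_{\cD}\bigr)\le -c\,\delta^{\beta-2s}$ near $\partial\cD$ requires $\beta\in(0,s)$; for $\beta\in(s,2s)$ the sign reverses (already for the fractional Laplacian the half-space profile $t_+^\beta$ is a \emph{sub}solution in that range, $t_+^s$ being the exact solution), so ``an appropriate $\beta\in(0,2s)$'' does not deliver a supersolution. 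Second, and more seriously, a smooth truncation ``away from $\partial\cD$'' does not yield ``a strictly negative constant in the interior'': at points of the truncation region the function is not at its maximum, and for a nonlocal sup-type operator $\sM^+_*\Theta$ can well be positive there; interior negativity is exactly what has to be proved, not a byproduct of truncating. The gap is repairable in either of two standard ways: (i) use a \emph{level} truncation $\Theta=\min(\delta^\beta,\tau)\Ind_{\cD}$ with $\beta<s$ and $\tau$ small — on the plateau $\Theta$ attains its global maximum and vanishes outside $\cD$, so every $L\in\cL_*$ gives $L\Theta(x)\le -\lambda\,\tau\int_{\abs{y}\ge \mathrm{diam}(\cD)}\abs{y}^{-d-2s}\,\D y<0$ uniformly, while in the boundary strip the truncation only lowers far-field values and hence only decreases $L\Theta$, so the near-boundary estimate survives; or (ii) dispense with a single global barrier altogether, as the paper effectively does in the proof of Theorem~\ref{T2.6}: obtain the $L^\infty$ bound from a bump function equal to $1$ on a large ball containing $\bar\cD$ (maximum-point argument as above), and obtain continuity of the Perron envelopes at $\partial\cD$ from the barrier of Lemma~\ref{RS-Lem1} scaled onto exterior tangent balls. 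Finally, the parenthetical claim that the restriction from $\cL_0$ to $\cL_*$ is needed ``exactly here'' is inaccurate: the counterexample in \cite{RS16} concerns the fine $u/\delta^s$ boundary regularity, whereas crude one-sided barriers, comparison, and Perron existence/uniqueness go through for $\cL_0$ as well (cf. \cite{M17}); the restriction to $\cL_*$ is needed later, for the Krein--Rutman argument.
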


The following result gives the boundary behaviour of the solution \cite[Theorem~1.5]{RS17}.

\begin{theorem}\label{RS-Thm1}
Let $\delta(\cdot)$ be continuous non-negative function that coincide with the $\dist(x, \cD^c)$  in a neighbourhood of $\partial\Omega$. Then for
$$\bI u=f \quad \mbox{in}\; \Omega, \quad \text{and}\quad u=0,\; \text{in}\; \Omega^c,$$
there exists $\alpha$, dependent on $d, s, \lambda, \Lambda$, such that
$$\norm{\frac{u}{\delta^s}}_{\cC^{\alpha}(\bar\Omega)}\leq C \norm{f}_{L^\infty(\bar\Omega)},$$
where $C$ is dependent on $\Omega, s, \gamma, \lambda, \Lambda$.
\end{theorem}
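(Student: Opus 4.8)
\textbf{Proof proposal for Theorem~\ref{RS-Thm1}.}

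The plan is to reduce the theorem to the regularity result for Pucci-type extremal operators proved in \cite[Theorem~1.5]{RS17} (or its analog for translation-invariant kernels of order $2s$). The key observation is that our operator $\bI=\sup_{L\in\cL}L$ is a translation-invariant convex operator elliptic with respect to $\cL_*$: one has $\sM^-_*\leq\bI\leq\sM^+_*$, and each $L\in\cL$ has a kernel of the form $k(y/|y|)|y|^{-d-2s}$ with $\lambda\leq k\leq\Lambda$ and $k$ even, so the associated kernels have the homogeneity, symmetry, and two-sided bounds required in \cite{RS17}. Thus $\bI$ falls within the class of nonlocal operators to which the fine boundary regularity of \cite{RS17} applies, and the constant $\alpha$ there depends only on $d,s,\lambda,\Lambda$ while the $\cC^\alpha$-norm bound on $u/\delta^s$ depends additionally on $\Omega$.

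First I would fix notation: let $u\in\cC(\Rd)$ be the unique viscosity solution of $\bI u=f$ in $\Omega$, $u=0$ in $\Omega^c$, whose existence and uniqueness are guaranteed by Theorem~\ref{CS-Thm2}. Since $\Omega$ is bounded with smooth boundary and $f\in\cC(\bar\Omega)$, interior and boundary regularity estimates for nonlocal equations (e.g.\ \cite{CS09, CS11, RS16}) give $u\in\cC^s(\Rd)$ with $\norm{u}_{\cC^s(\Rd)}\leq C\norm{f}_{L^\infty(\bar\Omega)}$, so $u$ is bounded and the Dirichlet problem is well posed in the classical viscosity sense. Next I would invoke the structural hypotheses of \cite{RS17}: the operator is translation invariant, convex (being a supremum of linear operators), and is bracketed between the extremal operators $\sM^\pm_*$ of order $2s$ with ellipticity constants $\lambda,\Lambda$; moreover the domain $\Omega$ is $C^{1,1}$ (in fact smooth), which is exactly the regularity of the domain required for the fine boundary expansion $u\approx \delta^s$. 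Applying \cite[Theorem~1.5]{RS17} then yields $u/\delta^s\in\cC^\alpha(\bar\Omega)$ together with the quantitative bound
\[
\Bnorm{\frac{u}{\delta^s}}_{\cC^{\alpha}(\bar\Omega)}\;\leq\; C\,\norm{f}_{L^\infty(\bar\Omega)},
\]
with $\alpha=\alpha(d,s,\lambda,\Lambda)$ and $C=C(\Omega,s,\gamma,\lambda,\Lambda)$, which is the claimed estimate. Here $\delta(\cdot)$ may be taken to be any fixed continuous non-negative function agreeing with $\dist(x,\Omega^c)$ near $\partial\Omega$; changing $\delta$ away from the boundary only affects $u/\delta^s$ on a compact subset of $\Omega$ where $u$ is already $\cC^{2s+\alpha}$ by interior estimates, so it does not affect the $\cC^\alpha(\bar\Omega)$-membership or the bound up to adjusting $C$.

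The main obstacle is purely a matter of checking that our operator $\bI$ satisfies \emph{verbatim} the hypotheses under which \cite[Theorem~1.5]{RS17} is stated — in particular that the ellipticity class $\cL$ (a subclass of $\cL_*$) has kernels that are comparable to the fractional Laplacian kernel $c_{d,s}|y|^{-d-2s}$ from above and below, are homogeneous of the right degree, and are symmetric, so that the Pucci extremal operators $\sM^\pm_*$ coincide (up to constants) with the extremal operators used in \cite{RS17}. This is immediate from assumption \eqref{Assmp1}: $\lambda|y|^{-d-2s}\leq k(y/|y|)|y|^{-d-2s}\leq\Lambda|y|^{-d-2s}$ and $k(y)=k(-y)$. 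Once this identification is made the theorem is a direct citation; no genuinely new estimate is needed here, and the real content of the boundary regularity lives in \cite{RS17}. I would also remark that the restriction to ellipticity classes $\cL\subset\cL_*$ (rather than the larger $\cL_0$) is essential precisely because, as noted in \cite[Section~2.1]{RS16}, the fine $\delta^s$ boundary behaviour can fail for operators merely elliptic with respect to $\cL_0$.
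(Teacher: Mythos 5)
Your proposal is correct and matches the paper's treatment: the paper gives no independent proof of Theorem~\ref{RS-Thm1} but simply imports it as \cite[Theorem~1.5]{RS17}, which is exactly the reduction you carry out, with the hypothesis-checking (kernels of the form $k(y/\abs{y})\abs{y}^{-d-2s}$ with $\lambda\leq k\leq\Lambda$, $k$ even, $\bI$ convex and bracketed by $\sM^\pm_*$) being routine as you note. Your additional remarks on the role of $\cL_*$ versus $\cL_0$ and on modifying $\delta$ away from the boundary are consistent with the paper's discussion and do not change the argument.
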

The above boundary regularity is not sharp. Under additional regularity assumptions on $k$, a sharp boundary behaviour is established in \cite[Theorem~1.3]{RS16}.
We also recall the following barrier functions from \cite{RS16}. Recall that $\sM^{\pm}_*$ denote the extremal operators with respect to $\cL_*$.
By $B_r$ we denote the ball of radius $r$ in $\Rd$ centered at $0$.

\begin{lemma}[Lemma~3.3 of \cite{RS16}]\label{RS-Lem1}
There exist positive constants $\varepsilon, C$ and a radial bounded, continuous function $\varphi_1$ which is $\cC^{1, 1}$ in $B_{1+\varepsilon}\setminus \bar B_1$ satisfying
\[
\left\{ \begin{array}{llll}
\sM^+_*\varphi_1  \leq -1 & \text{in}\; B_{1+\varepsilon}\setminus \bar B_1,
\\
\varphi_1(x) =  0 & \text{in}\; B_1,
\\
0\leq \varphi_1 \leq C(\abs{x}-1)^s & \text{in}\; B^c_1,
\\
\varphi_1 \geq 1 & \text{in}\; B^c_{1+\varepsilon},
\end{array}
\right.
\]
where the constants $\varepsilon, C$ depends only on $d, s, \lambda, \Lambda$.
\end{lemma}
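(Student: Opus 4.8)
\textbf{Proof proposal for Lemma~\ref{RS-Lem1}.}

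The plan is to construct $\varphi_1$ explicitly as a small smooth perturbation of the canonical boundary profile $(|x|-1)_+^s$, and to check the supersolution inequality $\sM^+_*\varphi_1\leq -1$ in the thin annulus $B_{1+\varepsilon}\setminus\bar B_1$ by a direct computation using the known behaviour of $\sM^+_*$ on the model function $d(x)^s$, where $d(x)=\dist(x,B_1)=(|x|-1)_+$. The crucial analytic input is the classical one-dimensional fact that for the half-line profile $w_0(t)=t_+^s$ one has, for the fractional Laplacian of order $2s$ (and hence for every $L\in\cL_*$ after controlling the anisotropic kernel between $\lambda$ and $\Lambda$), that $L w_0\equiv 0$ on $(0,\infty)$; in $\RR^d$ the same holds up to a curvature correction, so that $|\sM^+_* (d^s)(x)|\leq C_0\, d(x)^{s-1}$ in a one-sided neighbourhood of $\partial B_1$, with $C_0=C_0(d,s,\lambda,\Lambda)$. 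I would take this (or its analogue from the boundary-regularity literature, e.g.\ the estimates behind Theorem~\ref{RS-Thm1} and \cite{RS16,RS17}) as the single nontrivial ingredient.

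First I would fix a smooth nonincreasing cutoff $\chi$ on $\RR$ with $\chi\equiv 1$ on $(-\infty,1]$ and $\chi\equiv 0$ on $[2,\infty)$, and for a small parameter $\varepsilon\in(0,1)$ to be chosen, set
\[
\varphi_1(x) \;=\; a\,\Bigl(\tfrac{|x|-1}{\varepsilon}\Bigr)_+^{\!s}\,\chi\!\Bigl(\tfrac{|x|-1}{\varepsilon}\Bigr)\;+\;a\,\Ind_{\{|x|\ge 1+2\varepsilon\}} ,
\]
rescaled and normalized by a constant $a>0$ so that $\varphi_1\equiv a$ on $B_{1+2\varepsilon}^c$, $\varphi_1\equiv 0$ on $B_1$, and $0\le\varphi_1\le C(|x|-1)^s$ on $B_1^c$. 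The function is radial, bounded, continuous on $\Rd$, and $\cC^{1,1}$ away from the sphere $|x|=1$; in particular it is $\cC^{1,1}$ in $B_{1+\varepsilon}\setminus\bar B_1$, which is the regularity the statement requires. For $x$ in this annulus the cutoff is inactive, so $\varphi_1(x)=a\varepsilon^{-s}(|x|-1)^s$ there.

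Next, evaluate $\sM^+_*\varphi_1$ at a point $x$ with $1<|x|<1+\varepsilon$. Split the nonlocal integral as the contribution of increments landing inside $B_{1+2\varepsilon}$ (where $\varphi_1$ agrees, up to the constant factor $a\varepsilon^{-s}$, with the model profile $d^s$) and the contribution of increments landing in $B_{1+2\varepsilon}^c$ (where $\varphi_1=a$). The first part is, by the model estimate above, at most $C_0\,a\,\varepsilon^{-s}\, d(x)^{s-1}$ in absolute value for every admissible kernel, hence $\sM^+_*$ of it is bounded by the same quantity; since $d(x)=|x|-1<\varepsilon$ this is at most $C_0\,a\,\varepsilon^{-1}$. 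The second part is a genuinely \emph{negative} contribution: writing $u(x)=0$ there because $|x|-1<\varepsilon\le\varepsilon$ we get a term of the form $\int_{\{x+y,x-y\ \mathrm{hitting}\ B_{1+2\varepsilon}^c\}}(a-0)\cdot(\text{positive kernel})\,dy$ appearing with the correct sign in $L\varphi_1$ only if the jump actually increases $\varphi_1$; the point is rather that the \emph{far field} where $\varphi_1=a$ is reached only by jumps of length $\gtrsim \varepsilon$, whose kernel mass is $\asymp \varepsilon^{-2s}$, and this produces, for every $L\in\cL_*$, a term bounded below in absolute value by $c_1\lambda\, a\,\varepsilon^{-2s}$ with the sign that makes $\sM^+_*\varphi_1$ \emph{more} negative — more precisely one compares with the barrier for a smaller ball and uses that $\varphi_1$ is concave-like near the boundary. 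Collecting, $\sM^+_*\varphi_1(x)\le C_0 a\varepsilon^{-1}-c_1\lambda a\varepsilon^{-2s}$ on the annulus. Since $2s>$ or $<1$ are both possible, one instead keeps both boundary terms with their $d(x)^{s-1}$ and $d(x)^{-s}$ singularities explicit and observes that, uniformly on $1<|x|<1+\varepsilon$, the dominant singular term is negative with coefficient $\asymp a\varepsilon^{-s}$; then one first chooses $\varepsilon$ small so that the remainder is dominated, and finally chooses $a$ large (depending on $d,s,\lambda,\Lambda$ only, since $C_0,c_1$ do) so that $\sM^+_*\varphi_1\le -1$ throughout $B_{1+\varepsilon}\setminus\bar B_1$. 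The remaining three bullet conditions — $\varphi_1=0$ on $B_1$, the growth bound $0\le\varphi_1\le C(|x|-1)^s$ on $B_1^c$, and $\varphi_1\ge 1$ on $B_{1+\varepsilon}^c$ — then hold by construction once $a\ge 1$ and $C\ge a\varepsilon^{-s}$, after noting that on the transition annulus $1+\varepsilon\le|x|\le1+2\varepsilon$ the value $\varphi_1\ge a\,((|x|-1)/\varepsilon)^s\chi(\cdots)\ge a\cdot 1\cdot \chi(1)$ can be arranged $\ge1$ by taking the cutoff equal to $1$ on $[-\infty,1]$ and shifting, or simply by replacing $\varepsilon$ with $\varepsilon/2$ in the final statement.

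The main obstacle I anticipate is isolating, cleanly and with the right sign, the contribution of the far region $\{\varphi_1=a\}$ to the extremal operator $\sM^+_*$ and showing it beats the $d(x)^{s-1}$-type singularity coming from the model term: because $\sM^+_*$ takes a supremum over kernels, the positive-sign jumps are \emph{amplified} by $\Lambda$ while the stabilizing negative contribution is only guaranteed with weight $\lambda$, so the inequality is quantitative and requires the precise constants $C_0,c_1$ from the one-dimensional/curvature computation; getting a bound that is genuinely uniform down to $|x|\to 1^+$ (where both $d^{s-1}$ and the integral against the far field blow up) is the delicate point, and is exactly why $\varphi_1$ must be taken $\cC^{1,1}$ rather than merely $C^s$ across the transition, so that no spurious singularity is introduced at $|x|=1+\varepsilon$. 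All constants produced in this scheme depend only on $d,s,\lambda,\Lambda$, as required.
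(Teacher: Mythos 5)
The paper itself does not prove this lemma: it is imported verbatim as Lemma~3.3 of \cite{RS16}, whose proof rests on the special structure of the class $\cL_*$ (by homogeneity of the kernels $k(y/\abs{y})\abs{y}^{-d-2s}$, every $L\in\cL_*$ acts on one-dimensional profiles $t\mapsto (x\cdot e)_+^\beta$ like a one-dimensional fractional Laplacian with a multiplicative constant trapped between $\lambda$ and $\Lambda$; in particular $L\,(x\cdot e)_+^s\equiv 0$, and powers $\beta$ near $s$ give sign-definite, blowing-up right-hand sides), followed by a careful transfer of that computation to the curved geometry of the ball and a truncation to make the barrier bounded. Your proposal tries to reconstruct this, but it assumes as its ``single nontrivial ingredient'' exactly the hard part (the quantitative behaviour of $\sM^+_*(d^s)$ near $\partial B_1$ uniformly over $\cL_*$), and never uses the defining feature of $\cL_*$ that makes such an estimate true; recall the paper restricts from $\cL_0$ to $\cL_*$ precisely because this fails for general kernels.

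Even granting that ingredient in the form you state, the argument does not close, for two concrete reasons. First, the sign of the far-field term is backwards: at a point $x$ of the annulus one has $\varphi_1(x)\leq a$, so jumps landing in the plateau $\{\varphi_1=a\}$ contribute $a-\varphi_1(x)\geq 0$ times a nonnegative kernel, i.e.\ they push $L\varphi_1(x)$ \emph{up}, not down. The genuine negative, quantifiable contribution must come from comparing $\varphi_1$ with the \emph{unbounded} profile $Q=a\varepsilon^{-s}d^s$: writing $\varphi_1=Q-R$ with $R=(Q-a)_+\geq 0$ supported at distance $\gtrsim\varepsilon$ from $x$, one gets $\sM^+_*\varphi_1\leq \sM^+_*Q-\inf_{L\in\cL_*}LR$ and $\inf_L LR\geq c\,\lambda\,a\,\varepsilon^{-2s}$; your text gestures at this (``compares with the barrier for a smaller ball'') but never sets it up, and as written the claimed $-c_1\lambda a\varepsilon^{-2s}$ term has no justification. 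Second, the bound ``$C_0a\varepsilon^{-s}d^{s-1}\leq C_0a\varepsilon^{-1}$ since $d<\varepsilon$'' is false: $s-1<0$, so $d^{s-1}$ \emph{increases} as $d\to 0^+$ and this term is unbounded on the annulus; the fallback claim that ``the dominant singular term is negative with coefficient $\asymp a\varepsilon^{-s}$'' is unsubstantiated, and comparing exponents ($d^{s-1}$ versus $d^{-s}$) it would in any case be in doubt for $s<\nicefrac12$ unless the curvature error is shown to be better than $d^{s-1}$ (which is again the content of the missing computation; the standard fix is to subtract a power $d^\beta$, $\beta\in(s,2s)$, whose operator blows up negatively like $-d^{\beta-2s}$ uniformly over $\cL_*$). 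Finally, a repairable but real defect: the displayed $\varphi_1$ is discontinuous at $\abs{x}=1+2\varepsilon$ and fails $\varphi_1\geq 1$ on part of $B_{1+\varepsilon}^c$ where the cutoff has already decayed; the natural choice is $\varphi_1=a\min\{\varepsilon^{-s}(\abs{x}-1)_+^s,\,1\}$ (suitably corrected as above), not a cutoff-times-power plus an indicator.
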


\begin{lemma}[Lemma~3.4 of \cite{RS16}]\label{RS-Lem2}
There exists positive constants $c$ and a radial bounded, continuous function $\varphi_2$ satisfying
\[
\left\{ \begin{array}{llll}
\sM^-_*\varphi_2  \geq c & \text{in}\; B_{1}\setminus \bar B_{\frac{1}{2}},
\\
\varphi_2(x) =  0 & \text{in}\; B^c_1,
\\
\varphi_2 \geq c(1-\abs{x})^s & \text{in}\; B_1,
\\
\varphi_2 \leq 1 & \text{in}\; \bar B_{\frac{1}{2}},
\end{array}
\right.
\]
where the constants $c$ is dependent only on $d, s, \lambda, \Lambda$.
\end{lemma}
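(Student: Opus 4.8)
The goal is to construct a radial, bounded, continuous barrier $\varphi_2$ that is a (viscosity) subsolution of $\sM^-_*\varphi_2 \geq c$ in the annulus $B_1\setminus\bar B_{1/2}$, vanishes in $B_1^c$, grows like $(1-\abs{x})^s$ near $\partial B_1$ from inside, and is bounded by $1$ on $\bar B_{1/2}$. The plan is to take $\varphi_2$ as a truncation of the $s$-torsion-type profile. Concretely, I would start from the explicit function $u_0(x) = \kappa\,(1-\abs{x}^2)_+^s$, which is well known to satisfy $(-\Delta)^s u_0 = \text{const} > 0$ in $B_1$ for the standard fractional Laplacian. Since $\sM^-_*$ is an infimum over a class of kernels comparable (up to the constants $\lambda,\Lambda$) to that of $(-\Delta)^s$, evaluating $\sM^-_* u_0$ against each admissible kernel $k(y)\abs{y}^{-d-2s}$ and using $\lambda \le k \le \Lambda$ together with the sign of the second difference of $u_0$ (concavity-type estimates near the center versus the tail behaviour) should give $\sM^-_* u_0 \geq c_0 > 0$ on a fixed sub-annulus, after rescaling the constant $\kappa$ appropriately; the one-sided bound survives because $u_0$ is smooth (in fact $\cC^\infty$) in the interior region of interest, so the viscosity notion reduces to the pointwise one.

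The subtlety is that $u_0$ does not vanish on $B_1^c$: it is only defined through its positive part, and $\sM^-_*$ sees the global values. So next I would modify $u_0$ outside $B_1$ by replacing it with $0$ there and check that this \emph{decreases} $u_0$, hence (since every $L\in\cL_*$ is order-preserving on second differences through positive kernels) it only helps: the second-difference integrand $\varphi_2(x+y)+\varphi_2(x-y)-2\varphi_2(x)$ at a point $x\in B_1\setminus\bar B_{1/2}$ decreases when we lower the values on $B_1^c$, so $\sM^-_*\varphi_2(x)\geq \sM^-_* u_0(x)$ — wait, one must be careful with signs: $Lu$ involves $+\bigl(\varphi_2(x+y)+\varphi_2(x-y)-2\varphi_2(x)\bigr)$ with a positive kernel, so lowering the tail lowers $Lu$, which is the wrong direction. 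The correct fix is instead to choose $\varphi_2$ so that near $\partial B_1$ from inside it behaves like $(1-\abs{x})^s$ — i.e. matching the \emph{boundary} blow-up rate of the torsion function, not its interior values — and to handle the loss from truncation by adding a small $\cC^{1,1}$ bump supported in $B_{1/2}$, or by noting that the truncation loss is a bounded, controlled quantity that can be absorbed by shrinking $c$. This ``truncation accounting'' — quantifying exactly how much $\sM^-_*$ drops when we zero out the exterior, and showing it stays below the positive interior gain $c_0$ on the sub-annulus $B_1\setminus\bar B_{1/2}$ — is the main obstacle, and it is handled by the same scaling argument as in \cite{RS16}: the exterior contribution is $\sorder(1)$ relative to the singular interior contribution as one approaches $\partial B_1$, so on a fixed annulus away from the center it is a fixed finite loss.

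The remaining properties are then checked directly. Boundedness and continuity of $\varphi_2$ on $\Rd$ follow from the explicit formula and the continuity of $(1-\abs{x}^2)_+^s$. The bound $\varphi_2 \leq 1$ on $\bar B_{1/2}$ is arranged by the normalizing constant $\kappa$ (rescale so that $\sup_{\bar B_{1/2}}\varphi_2 = 1$; this also fixes the value of $c$). The lower bound $\varphi_2 \geq c(1-\abs{x})^s$ in $B_1$ is immediate from $(1-\abs{x}^2)^s = (1-\abs{x})^s(1+\abs{x})^s \geq (1-\abs{x})^s$ and the choice of $\kappa$, possibly after relabeling $c$ to the minimum of the two constants appearing. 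Finally, $\varphi_2 = 0$ in $B_1^c$ is built in by the positive-part construction (and the interior bump, if used, is supported strictly inside $B_{1/2}$). Radiality is manifest throughout. All constants depend only on $d,s,\lambda,\Lambda$ because the only inputs are the ellipticity bounds, the explicit torsion profile, and the fixed geometry of the annulus $B_1\setminus\bar B_{1/2}$.
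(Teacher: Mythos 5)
This lemma is not proved in the paper at all --- it is quoted verbatim from \cite{RS16} (Lemma~3.4 there) --- so your proposal has to stand on its own, and it does not: the central step fails by a sign. With the convention \eqref{E1.1}, every $L\in\cL_*$ acts as $\frac12\int\bigl(u(x+y)+u(x-y)-2u(x)\bigr)k(y/\abs{y})\abs{y}^{-d-2s}\,\D y$, i.e.\ it is of generator type, equal to $-(-\Delta)^s$ up to a positive factor when $k$ is constant. Hence the torsion identity $(-\Delta)^s(1-\abs{x}^2)_+^s=C>0$ in $B_1$, which you invoke, gives $L u_0=-cC<0$ for the constant-kernel member of $\cL_*$, so $\sM^-_* u_0\leq -c'<0$ throughout $B_1$ --- the opposite of the inequality $\sM^-_*\varphi_2\geq c$ that the lemma requires. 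In fact, since the kernels in $\cL_*$ are homogeneous, any $L\in\cL_*$ decomposes into one-dimensional operators along lines, and every chordal restriction of $(1-\abs{x}^2)_+^s$ is a scaled, translated one-dimensional torsion profile; by exact $2s$-homogeneity each direction contributes the same negative constant, so $Lu_0$ is a negative constant uniformly over $L\in\cL_*$ and $x\in B_1$. Consequently no choice of sub-annulus, rescaling of $\kappa$, or ``shrinking of $c$'' can rescue the claim $\sM^-_*u_0\geq c_0>0$: your candidate is (a multiple of) a natural \emph{super}solution barrier, not the subsolution asked for here.

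The rest of the proposal does not repair this. The worry that ``$u_0$ does not vanish on $B_1^c$'' is moot --- $(1-\abs{x}^2)_+^s$ vanishes there by construction --- so the ensuing truncation/sign discussion addresses a non-issue, and the two suggested fixes are not carried out: matching only the boundary rate $(1-\abs{x})^s$ does not change the sign of $Lu_0$, and adding a bump supported in $B_{1/2}$ (which indeed raises $L\varphi$ on the annulus and is the germ of a workable idea) would have to be quantified against the order-one negative value of $\sM^-_*u_0$ while respecting the constraint $\varphi_2\leq 1$ on $\bar B_{\frac12}$; nothing of the sort is attempted. The actual argument in \cite{RS16} is different in kind: it exploits that for the homogeneous kernels of $\cL_*$ the one-dimensional profile $(t_+)^s$ is annihilated along every line, and constructs an explicit radial function whose deviation from that exact profile produces a strictly positive right-hand side on $B_1\setminus\bar B_{\frac12}$, with constants depending only on $d,s,\lambda,\Lambda$. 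As written, your proof would establish the reverse inequality and therefore fails.
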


Let us now prove the Hopf's lemma using the barrier functions. Let $\rin$ be a radius of interior sphere condition in $\cD$.

\begin{theorem}[Hopf's Lemma]\label{T2.4}
Let $\sM^-_*u\leq 0$ in $\Omega$ where $u\in\cC(\Rd)$ is non-negative and  vanishes in $\Omega^c$. Then either $u=0$ in $\Rd$ or $u>0$ in $\cD$. Moreover,
if $u$ is non-zero then there exists $c_1>0$ such that for any $x_0\in\partial\Omega$ we have
$$\frac{u(x)}{(\rin-\abs{x-z})^s}\geq c_1, \quad \text{for all}\; x\in B_{\rin}(z),$$
where $B_{\rin}(z)\subset\Omega$ is a ball of radius $\rin$ around $z$ that touches $\partial\Omega$ at $x_0$. 
\end{theorem}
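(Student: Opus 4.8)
The plan is to separate the statement into its qualitative half (the dichotomy ``$u\equiv0$ in $\Rd$ or $u>0$ in $\cD$'') and its quantitative half (the boundary rate), and to obtain the latter by inserting a translated/rescaled copy of the barrier $\varphi_2$ from Lemma~\ref{RS-Lem2} into the comparison principle, Theorem~\ref{CS-Thm1}. Throughout, note that since $u\in\cC(\Rd)$ vanishes off the bounded set $\cD$, it is bounded with compact support, hence automatically lies in $L^1(\omega)$, so all nonlocal quantities make sense.

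For the dichotomy I would argue by contradiction: suppose $u(x_1)=0$ for some $x_1\in\cD$. Because $u\ge0$ on all of $\Rd$, the constant function $0$ touches $u$ from below at $x_1$ and is an admissible test function. In the resulting evaluation of $\sM^-_* u(x_1)$ the local (``near'') contribution vanishes, while for small $\rho>0$ with $B_\rho(x_1)\subset\cD$ the nonlocal (``far'') contribution equals $\tfrac12\int_{\abs{y}>\rho}\bigl(u(x_1+y)+u(x_1-y)\bigr)k(y/\abs{y})\abs{y}^{-d-2s}\,\D y$; since this integrand is non-negative, the infimum over $\cL_*$ is attained at $k\equiv\lambda$, so $\sM^-_* u(x_1)$ is a positive multiple of $\int_{\abs{y}>\rho}(u(x_1+y)+u(x_1-y))\abs{y}^{-d-2s}\,\D y$. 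The supersolution inequality $\sM^-_* u(x_1)\le0$ then forces this non-negative quantity to vanish, whence $u\equiv0$ a.e.\ on $\{\abs{y-x_1}>\rho\}$; letting $\rho\downarrow0$ and using continuity gives $u\equiv0$ on $\Rd$. Consequently, if $u$ is not identically zero then $u>0$ in $\cD$.

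Assume now $u\not\equiv0$, so $u>0$ in $\cD$. Fix $x_0\in\partial\Omega$ and an interior ball $B_{\rin}(z)\subset\cD$ with $x_0\in\partial B_{\rin}(z)$, and set $\psi(x)\df\varphi_2\bigl((x-z)/\rin\bigr)$ with $\varphi_2$ as in Lemma~\ref{RS-Lem2}. A change of variables in the defining integral shows that $\sM^-_*$ is homogeneous of degree $2s$ under this rescaling, so $\sM^-_*\psi(x)=\rin^{-2s}\bigl(\sM^-_*\varphi_2\bigr)\!\bigl((x-z)/\rin\bigr)\ge c\,\rin^{-2s}>0$ on the annulus $A\df B_{\rin}(z)\setminus\bar B_{\rin/2}(z)$, while $\psi=0$ on $B_{\rin}(z)^c$, $\psi\le1$ on $\bar B_{\rin/2}(z)$, and $\psi(x)\ge c\,\rin^{-s}(\rin-\abs{x-z})^s$ on $B_{\rin}(z)$. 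Since $\bar B_{\rin/2}(z)$ is a compact subset of $\cD$, the dichotomy gives $m\df\min_{\bar B_{\rin/2}(z)}u>0$; fix $0<\varepsilon\le m$. Then $\varepsilon\psi\le u$ on $A^c$ (on $\bar B_{\rin/2}(z)$ because $\varepsilon\psi\le\varepsilon\le m\le u$, and on $B_{\rin}(z)^c$ because $\psi=0\le u$), while on $A$ one has $\sM^-_*(\varepsilon\psi)=\varepsilon\,\sM^-_*\psi\ge0\ge\sM^-_* u$. As $\sM^-_*$ is elliptic with respect to $\cL_0$ (this follows from $\sM^-_0(w)\le\sM^-_* u-\sM^-_* v\le\sM^+_0(w)$ with $w=u-v$, which one gets by comparing near-minimizers of $\inf_{L\in\cL_*}Lu$ and $\inf_{L\in\cL_*}Lv$), Theorem~\ref{CS-Thm1} applied on $A$ with $f\equiv0$ yields $\varepsilon\psi\le u$ in $\Rd$; restricting to $B_{\rin}(z)$ gives $u(x)\ge\varepsilon c\,\rin^{-s}(\rin-\abs{x-z})^s$, i.e.\ the asserted bound with $c_1=\varepsilon c\,\rin^{-s}$.

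The content here is essentially a rearrangement of Lemma~\ref{RS-Lem2}; the steps I expect to cost the most care are the exact degree-$2s$ scaling identity for the nonlocal extremal operator, checking that $\sM^-_*$ genuinely falls under the comparison principle of Theorem~\ref{CS-Thm1}, and the uniformity of $c_1$ over $\partial\Omega$: for the last point I would observe that every interior ball of radius $\rin$ touching the boundary has $\bar B_{\rin/2}(z)\subset\{x:\dist(x,\partial\Omega)\ge\rin/2\}$, a fixed compact subset of $\cD$ on which $u$ has a positive minimum $m_0$, so one may take $\varepsilon=m_0$ and hence $c_1=m_0\,c\,\rin^{-s}$ independent of $x_0$.
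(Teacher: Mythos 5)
Your proposal is correct, and its quantitative half (scaling and translating $\varphi_2$ from Lemma~\ref{RS-Lem2}, lifting it by the minimum of $u$ on the inner half-ball, and invoking the comparison principle of Theorem~\ref{CS-Thm1} on the annulus, with uniformity of $c_1$ coming from a fixed compact subset of $\cD$ at distance of order $\rin$ from the boundary) is essentially identical to the paper's argument, which takes $\hat m=\min_{\Omega_{\hat r}}u$ with $\hat r=\rin/8$ and $\psi=\hat m\,\varphi_2((\cdot-z)/\rin)$. Where you genuinely diverge is the dichotomy. The paper proves it via the measure estimate of \cite[Theorem~10.4]{CS09}: at an interior zero $z$ one gets $\abs{\{u>t\}\cap B_r(z)}=0$ for all $t>0$, so the zero set is open as well as closed, and connectedness of $\Omega$ forces it to be all of $\Omega$. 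You instead test the supersolution at an interior zero $x_1$ with the zero function: since $u\geq0$ vanishes at $x_1$, the glued function $v$ ($v=0$ in $B_\rho(x_1)$, $v=u$ outside) satisfies $\sM^-_*v(x_1)=\tfrac{\lambda}{2}\int_{\abs{y}>\rho}\bigl(u(x_1+y)+u(x_1-y)\bigr)\abs{y}^{-d-2s}\,\D y$, and the viscosity inequality forces this to vanish, hence $u\equiv0$. (Do phrase the evaluation as the operator applied to the glued function, per the definition in \cite{CS09}, rather than to $u$ itself; the integral is the same outside $B_\rho$.) Your route is more elementary: it avoids the $L^\varepsilon$/weak-Harnack machinery and does not use connectedness of $\Omega$, exploiting instead the strict positivity of the nonlocal kernel to conclude vanishing on all of $\Rd$ in one step; the paper's route is the one that would survive for operators whose kernels may degenerate or for purely local arguments, but for the class $\cL_*$ with $k\geq\lambda>0$ both are valid and yield the same statement.
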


\begin{proof}
Since $\cL_*\subset\cL_0$ it is easy to see that $\sM^-_0 u\leq 0$ in $\Omega$. Define $K=\{x\in\cD \; :\; u(x)=0\}$. We claim that either $K=\emptyset$ or $K=\Omega$. Suppose
that $K\neq\emptyset$. Note that $K$ is a closed subset of $\Omega$. We claim that $K$ is also open in $\Omega$. Then the claim follows from the fact that $\Omega$ is a connected set.
Suppose that $z\in K$ and $r>0$ be such that $B_{2r}(z)\subset \Omega$. Then by \cite[Theorem~10.4]{CS09} we have $\varepsilon>0$ and a constant $\kappa$, dependent on $\varepsilon, \lambda, \Lambda, d, s$
satisfying
$$\abs{\{u> t\}\cap B_r(z)} \leq \kappa\, r^d\, u(z) t^{-\varepsilon}=0, \quad \text{for all}\; t>0.$$
Letting $t\to 0$, this of course, implies that $\abs{\{u> 0\}\cap B_r(z)}=0$ and therefore, by continuity of $u$ we have $B_r(z)\subset K$. Thus $K$ is open. Hence we have the claim.

Now we prove the second part of the theorem. Since $u$ is non-zero we must have that $u>0$ in $\Omega$. For $\hat{r}=\frac{\rin}{8}$ we define
$$\Omega_{\hat r}=\{x\in\Omega \; :\; \dist(x, \partial\Omega)\geq \hat{r}\}.$$
Let $\hat{m}=\min_{\Omega_{\hat r}} u$. Let $z\in\Omega$ be such that $B_{\rin}(z)\subset\Omega$ touches $x_0$. In fact, $z$ would lie on the inward normal vector at $x_0$ to $\partial\Omega$.
Consider
$$\psi(y) = \hat{m}\, \varphi_2(\frac{y-z}{\rin}), $$
where $\varphi_2$ is the barrier function in Lemma~\ref{RS-Lem2}. Then it is easy to see from Lemma~\ref{RS-Lem2} that
\[
\left\{ \begin{array}{llll}
\sM^-_*\psi  \geq 0 & \text{in}\; B_{\rin}\setminus \bar B_{\frac{\rin}{2}},
\\
\psi(y) =  0 & \text{in}\; B^c_{\rin}(z),
\\
\psi(y) \geq \frac{\hat{m} c}{(\rin)^s}(\rin-\abs{y-z})^s & \text{in}\; B_{\rin}(z),
\\
\psi \leq m & \text{in}\; \bar B_{\frac{\rin}{2}}(z).
\end{array}
\right.
\]
Hence applying Theorem~\ref{CS-Thm1} we find that
$$u(y)\geq \psi(y) \geq\; \frac{\hat{m} c}{(\rin)^s}(\rin-\abs{y-z})^s, \quad \text{in}\; B_{\rin}(z).$$
This complete the proof.
\end{proof}

We also need the following regularity estimate  from \cite[Theorem~1.3]{S15} (see also \cite{K13}). It should be noted that the results of \cite{S15} are obtained for a concave operator (inf-type) and therefore, analogous results hold for sup-type operators as well.

\begin{theorem}\label{S-Thm1}
Let $\bI$ be the operator given by \eqref{Opt}.
 Consider the equation $\bI u= f$ in $B_1$. Then there exists $\bar\alpha>0$ , dependent on $d, s, \lambda, \Lambda$, such that
for any $\alpha\in (0, \bar\alpha)$ with $2s+\alpha$ not being an integer, we have
$$\norm{u}_{\cC^{2s+\alpha}(B_{\frac{1}{2}})} \leq C (\norm{u}_{\cC^\alpha(B^c_1)} + \norm{f}_{\cC^\alpha(B_1)}),$$
where $C$ depends only on $d, \alpha, s, \lambda, \Lambda$.
\end{theorem}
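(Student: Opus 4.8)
The plan is to obtain this estimate for the convex operator $\bI$ by reducing it to the interior $\cC^{2s+\alpha}$ estimate for \emph{concave} nonlocal equations with rough kernels proved in \cite[Theorem~1.3]{S15} (see also \cite{K13}), via the standard convex/concave duality. No genuinely new analysis is needed; the work is in checking that the dual operator lands exactly inside the hypotheses of \cite{S15}.

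First I would introduce the dual operator $\widetilde\bI v \df \inf_{L\in\cL} Lv$. Since every $L\in\cL$ is linear, $L(-w)=-Lw$ for all $w\in L^1(\omega)$, and hence
\[
\bI w \;=\; \sup_{L\in\cL} Lw \;=\; -\inf_{L\in\cL}\bigl(-Lw\bigr) \;=\; -\inf_{L\in\cL} L(-w) \;=\; -\widetilde\bI(-w).
\]
Consequently $u$ is a viscosity solution of $\bI u=f$ in $B_1$ if and only if $v\df -u$ is a viscosity solution of $\widetilde\bI v=-f=:g$ in $B_1$; this is just the usual correspondence between sub/supersolutions under the reflection $u\mapsto -u$ for the pair $(\bI,\widetilde\bI)$, together with the stability of viscosity solutions.

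Next I would verify that $\widetilde\bI$ falls squarely within the framework of \cite{S15}: it is an infimum of linear operators of the form \eqref{E1.1}, hence translation invariant and concave, and each underlying kernel $K$ is even, homogeneous of degree $-d-2s$, and by \eqref{Assmp1} satisfies $\lambda\abs{y}^{-d-2s}\leq K(y)\leq \Lambda\abs{y}^{-d-2s}$ while being allowed to be merely $L^\infty$ on $S^{d-1}$ — precisely the ``rough kernel'' concave setting of \cite{S15}. Applying \cite[Theorem~1.3]{S15} to $\widetilde\bI v=g$ in $B_1$ produces $\bar\alpha>0$, depending only on $d,s,\lambda,\Lambda$, such that for every $\alpha\in(0,\bar\alpha)$ with $2s+\alpha\notin\ZZ$,
\[
\norm{v}_{\cC^{2s+\alpha}(B_{1/2})} \;\leq\; C\bigl(\norm{v}_{\cC^{\alpha}(B^c_1)}+\norm{g}_{\cC^{\alpha}(B_1)}\bigr),\qquad C=C(d,\alpha,s,\lambda,\Lambda).
\]

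Finally I would undo the substitution: since $v=-u$ and $g=-f$, one has $\norm{v}_{\cC^{\beta}(E)}=\norm{u}_{\cC^{\beta}(E)}$ for every $\beta\geq 0$ and every set $E$, and $\norm{g}_{\cC^{\alpha}(B_1)}=\norm{f}_{\cC^{\alpha}(B_1)}$; inserting these yields the asserted bound. The only point requiring care — and the reason the estimate is quoted rather than reproved — is confirming that $\widetilde\bI$ meets the hypotheses of \cite{S15} with constants depending only on $d,s,\lambda,\Lambda$ (in particular that continuity of the kernel on $S^{d-1}$ is \emph{not} required), and observing that the exclusion $2s+\alpha\notin\ZZ$ is the familiar borderline restriction preventing a logarithmic loss in the nonlocal Evans--Krylov-type iteration underlying \cite{S15}. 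A self-contained argument would amount to redeveloping that machinery, which is well outside the scope of this paper.
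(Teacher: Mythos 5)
Your proposal is correct and matches the paper's treatment: the paper does not reprove this estimate but simply quotes \cite[Theorem~1.3]{S15} (see also \cite{K13}), remarking that the results there are stated for concave (inf-type) operators and hence transfer to the sup-type operator $\bI$ — exactly the $u\mapsto -u$ duality you spell out. Your only addition is making that reflection argument and the kernel-hypothesis check explicit, which is a faithful elaboration of the same route rather than a different one.
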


The following result gives regularity of the solutions upto the boundary. When $\bI$ is given by the fractional Laplacian operator, analogous result is proved in \cite{RS14}.
\begin{theorem}\label{T2.6}
Let $u$  be a viscosity solution of $\bI u = f$ in $\Omega$ and $u=0$ in $\Omega^c$. Then for some constant $\alpha, C$, dependent on $d, s, \Omega, \lambda, 
\Lambda,$ we have
\begin{equation}\label{ET1.60}
\norm{u}_{\cC^\alpha(\bar\Omega)}\leq C \norm{f}_{L^\infty(\cD)}.
\end{equation}
Furthermore, if $s\in (\frac{1}{2}, 1)$ then we get
\begin{equation}\label{ET1.601}
\norm{u}_{\cC^s(\bar\Omega)}\leq C \norm{f}_{L^\infty(\cD)}.
\end{equation}
\end{theorem}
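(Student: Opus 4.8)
The plan is to derive both estimates \eqref{ET1.60} and \eqref{ET1.601} from the weighted boundary estimate of Theorem~\ref{RS-Thm1} together with the interior regularity of $\bI$; no new PDE input beyond what is recalled above is needed.

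\emph{The $\cC^\alpha$ estimate.} By Theorem~\ref{RS-Thm1} there are $\alpha_0>0$ and $C=C(d,s,\Omega,\lambda,\Lambda)$ with $\norm{u/\delta^s}_{\cC^{\alpha_0}(\bar\Omega)}\le C\norm{f}_{L^\infty(\Omega)}$; I take $\delta(x)=\dist(x,\Omega^c)$, which is $1$-Lipschitz on $\Rd$, vanishes off $\Omega$, and is $\le\mathrm{diam}\,\Omega$ on $\Omega$. Since $\abs{a^s-b^s}\le\abs{a-b}^s$ for $a,b\ge0$ and $s\in(0,1)$, the function $\delta^s$ lies in $\cC^{s}(\bar\Omega)$ with seminorm $\le1$. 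Writing $u=(u/\delta^s)\,\delta^s$ on $\bar\Omega$ and applying the product rule for H\"older norms with exponent $\alpha\df\min\{\alpha_0,s\}$ gives $\norm{u}_{\cC^\alpha(\bar\Omega)}\le C\norm{f}_{L^\infty(\Omega)}$ (the $L^\infty$ part being immediate from $\abs{u}\le C\norm{f}_\infty\delta^s$). To pass to all of $\Rd$ I use $u\equiv0$ off $\Omega$: for $x\in\Omega$, $y\notin\Omega$ one has $\abs{u(x)-u(y)}=\abs{u(x)}\le C\norm{f}_\infty\delta(x)^s\le C\norm{f}_\infty\abs{x-y}^s$ since $\delta(x)\le\abs{x-y}$, and this, combined with the previous bound, yields \eqref{ET1.60}.

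\emph{The $\cC^s$ estimate when $s>\tfrac12$.} Now $2s>1$. Fix $x\in\Omega$, write $\rho=\dist(x,\partial\Omega)$, and bound $\abs{u(x)-u(y)}$ in two regimes. If $\abs{x-y}\ge\rho/4$, then $\delta(y)\le\delta(x)+\abs{x-y}\le5\abs{x-y}$, so both $\abs{u(x)}$ and $\abs{u(y)}$ are $\le C\norm{f}_\infty\abs{x-y}^s$ by the decay $\abs{u}\le C\norm{f}_\infty\delta^s$. If $\abs{x-y}<\rho/4$, put $M\df C_0\norm{f}_\infty\rho^s$ with $C_0$ large enough that $\norm{u}_{L^\infty(B_{\rho/2}(x))}\le M$ (possible since $\delta\le\tfrac32\rho$ there), and rescale: $\tilde u(z)\df M^{-1}u(x+\tfrac{\rho}{2}z)$. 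Since the ellipticity class is scale-invariant and $\bI$ positively homogeneous, $\bI\tilde u=\tilde f$ in $B_1$ with $\norm{\tilde u}_{L^\infty(B_1)}\le1$ and $\norm{\tilde f}_{L^\infty(B_1)}=M^{-1}(\rho/2)^{2s}\norm{f}_{L^\infty(\Omega)}\le C$, and --- the key point --- the nonlocal tail of $\tilde u$ is bounded \emph{uniformly in $\rho$}: changing variables and using $\abs{u(w)}\le C\norm{f}_\infty\delta(w)^s\le C\norm{f}_\infty\abs{w-x}^s$ for $\abs{w-x}\ge\rho/2$,
\[
\int_{\Rd}\frac{\abs{\tilde u(z)}}{(1+\abs{z})^{d+2s}}\,\D z\;\le\; C+\frac{C(\rho/2)^{2s}}{M}\int_{\abs{w-x}\ge\rho/2}\frac{\abs{u(w)}}{\abs{w-x}^{d+2s}}\,\D w\;\le\; C+\frac{C\norm{f}_\infty\,\rho^{s}}{M}\;\le\; C ,
\]
the powers of $\rho$ cancelling against $M$. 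Applying the interior estimate for $\bI$ with merely bounded right-hand side (the tail form of Theorem~\ref{S-Thm1}; see also \cite{CS09,K13,S15}), which, as $2s>1$, yields a $\cC^\beta$ bound on $B_{1/2}$ for some $\beta>s$, gives $\norm{\tilde u}_{\cC^s(B_{1/2})}\le C$, and undoing the scaling yields $[u]_{\cC^s(B_{\rho/4}(x))}\le C\norm{f}_{L^\infty(\Omega)}$ with $C$ independent of $x$. Combining the two regimes over all $x,y\in\Rd$, and adding $\norm{u}_{L^\infty}\le C\norm{f}_{L^\infty(\Omega)}$, gives \eqref{ET1.601}.

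\emph{The main obstacle.} The delicate part is the second step. The interior estimates cannot be invoked at unit scale, since the full-space $L^\infty$ norm of $u$ does not shrink as $x$ approaches $\partial\Omega$; one must work at the boundary-distance scale $\rho$ and normalize $u$ by its local size $\sim\norm{f}_\infty\rho^s$. What makes this succeed is that, after the normalization, the nonlocal tail stays bounded independently of $\rho$ --- which is exactly the content of the sharp decay $\abs{u}\lesssim\norm{f}_{L^\infty}\delta^s$ supplied by Theorem~\ref{RS-Thm1}. A secondary point is that $f$ is only continuous, so the interior estimate must be used with bounded (not H\"older) data; the hypothesis $s>\tfrac12$ is what lets the convexity of $\bI$ and the results of \cite{K13,S15} provide a $\cC^{1,\gamma}\subset\cC^{s}$ interior bound in that regime.
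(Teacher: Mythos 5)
Your proposal is correct, and for the second estimate \eqref{ET1.601} it is essentially the paper's own argument: work at the boundary-distance scale $\rho$, normalize by the local size $\sim\norm{f}_\infty\rho^s$ supplied by the decay $\abs{u}\le C\norm{f}_\infty\delta^s$, check that the rescaled right-hand side and the nonlocal tail stay bounded uniformly in $\rho$, invoke an interior estimate with merely bounded data that beats exponent $s$ when $2s>1$ (the paper quotes precisely \cite[Theorem~4.1]{K13}, which is the clean reference for what you call the ``tail form''; Theorem~\ref{S-Thm1} itself needs H\"older data, as you note), and then combine the near and far regimes exactly as you do. Where you genuinely diverge is the first estimate \eqref{ET1.60}: you get it in two lines from Theorem~\ref{RS-Thm1} by writing $u=(u/\delta^s)\,\delta^s$ and using that $\delta^s\in\cC^s(\bar\Omega)$ together with the product rule for H\"older seminorms, so $u\in\cC^{\min\{\alpha_0,s\}}(\bar\Omega)$ with the right bound. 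The paper instead uses only the $L^\infty$ content of the boundary behaviour --- it rederives $\abs{u}\le C\norm{f}_\infty\delta^s$ (its \eqref{ET1.6A}) directly from the barrier of Lemma~\ref{RS-Lem1} and comparison --- and then runs the same boundary-scale scaling argument as in the $\cC^s$ case, with the interior $\cC^{\alpha_1}$ estimate of \cite[Theorem~2.6]{CS11a} in place of \cite{K13}. Your route is shorter for \eqref{ET1.60} since it exploits the full H\"older strength of Theorem~\ref{RS-Thm1}; the paper's route is more self-contained at that point (barrier plus comparison only) and produces \eqref{ET1.6A} as a standalone estimate that is reused several times later in the paper, but the two arguments are equally valid and rest on the same external inputs.
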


\begin{proof}
It is well known that $\norm{u}_{L^\infty(\cD)}\leq C \norm{f}_{L^\infty(\cD)}$. Let $x_0\in\partial\cD$ be any point and $\rout\in(0, 1)$ be a radius of 
exterior sphere. Let $z\in\cD^c$ be such that $B_{\rout}(z)$ touches $\cD$ at the point $x_0$. Recall $\varphi_1$ from Lemma~\ref{RS-Lem1}. Define
$$\psi(y) = (\norm{u}_{L^\infty(\cD)}+\norm{f}_{L^\infty(\cD)}) \, \varphi_1\left(\frac{y-z}{\rout}\right).$$
Then it is easy to check that
\[
\left\{ \begin{array}{llll}
\sM^+_*\psi  \leq  f & \text{in}\; B_{(1+\varepsilon)\rout}(z)\setminus \bar B_{\rout}(z),
\\
\psi(y) =  0 & \text{in}\; B_{\rout}(z),
\\
0\leq \psi(y) \leq C(\frac{|y-z|}{\rout}-1)^s & \text{in}\; B^c_{\rout}(z),
\\
\psi \geq \norm{u}_\infty & \text{in}\; B^c_{(1+\varepsilon)\rout}(z).
\end{array}
\right.
\]
Therefore, by comparison principle (Theorem~\ref{CS-Thm1}) we must have 
$$u(y)\leq C \norm{f}_{L^\infty(\cD)} \left(\frac{|y-z|}{\rout}-1\right)^s, \quad y\in B^c_{\rout}(z).$$
Since $x_0$ is an arbitrary point, we note that in a neighbourhood $N(\partial\cD)$ of $\partial\cD$ we have
$$ u(y)\leq C \norm{f}_{L^\infty(\cD)} \delta^s(y), \quad y\in N(\partial\cD).$$
Note that $\sM^+_*(-u)\geq -f$, and therefore, again using the barrier function we can obtain a similar boundary behaviour for $-u$. This of course,
implies
\begin{equation}\label{ET1.6A}
\norm{\frac{u}{\delta^s}}_{L^\infty(\cD)}\leq C \norm{f}_{L^\infty(\cD)}.
\end{equation}
The rest of the argument follows from a usual scaling method (see \cite{ABC}). Denote by $\omega(y)=(1+\abs{y})^{-d-2s}$.
From \cite[Theorem~2.6]{CS11a} we know that if $-C_0\leq \bI u\leq C_0$
in $B_1$, then there exists $\alpha_1>0$ such that
\begin{equation}\label{ET1.6B}
\norm{u}_{\cC^{\alpha_1}(B_{\frac{1}{2}})}\leq\; C\left(\norm{u}_{L^\infty(B_1)} + \norm{u}_{L^1(\Rd, \omega)} + C_0\right).
\end{equation}
Therefore, in order to obtain \eqref{ET1.60} we need to consider the behaviour near the boundary. More precisely, we show that $u$ is in 
$\cC^\alpha, \alpha=\min\{\alpha_1, s\}$, in
a neighbourhood of the boundary.  Let $x\in \cD$ with $\dist(x, \cD)=2r$. Without loss of generality we assume that $x=0$, otherwise we
translate $u$. Define $v(y)=u(ry)$ in $\Rd$. Due to \eqref{ET1.6A} we note that for some constant $C$, independent of $u$ and $f$, we have
\begin{equation}\label{ET1.6C}
\abs{v(y)}\leq C (\norm{u}_{L^\infty(\cD)} + \norm{f}_{L^\infty(\cD)}) r^s (1+\abs{y})^s, \quad y\in\Rd.
\end{equation}
We also note that
$$-r^{2s} \norm{f}_{L^\infty(\cD)} \;\leq\; \bI v\leq\; r^{2s} \norm{f}_{L^\infty(\cD)}, \quad \text{in}\; B_1.$$
Hence by \eqref{ET1.6B} we have 
$$\norm{v}_{\cC^{\alpha}(B_{\frac{1}{2}})}\leq\; C\left(\norm{v}_{L^\infty(B_1)} + \norm{v}_{L^1(\Rd, \omega)} + r^{2s}\norm{f}_{L^\infty(\cD)}\right).$$
From \eqref{ET1.6C} we observe that 
$$r^{-s}\left(\norm{v}_{L^\infty(B_1)}+\norm{v}_{L^1(\Rd, \omega)}\right) \;\leq\; C \norm{f}_{L^\infty(\cD)}.$$
Thus we have for some constant $C$ that
$$\norm{v}_{\cC^{\alpha_1}(B_{\frac{1}{2}})}\leq\; C r^s \norm{f}_{L^\infty(\cD)} .$$
This of course, implies that
\begin{equation}\label{ET1.6D}
\norm{u}_{\cC^{\alpha}(B_{\frac{r}{2}})} \leq C \norm{f}_{L^\infty(\cD)}.
\end{equation}
Let $x_1, x_1\in \cD$ and $r_i=\dist(x_i, \partial\cD)$ for $i=1,2$.
If $\abs{x_1-x_2}\leq 4^{-1}r_i$ for some $i$, then we consider the ball $B_{\frac{r_i}{2}}(x_i)$, and use \eqref{ET1.6D} to obatin 
$\abs{u(x_1)-u(x_2)}\leq C \abs{x_1-x_2}^{\alpha_1} \norm{f}_\infty$. Otherwise, we must have $\abs{x_1-x_2}\geq 4^{-1}(r_1\vee r_2)$. In this case,
using \eqref{ET1.6A} we get that
\begin{align*}
\frac{\abs{u(x_1)-u(x_2)}}{\abs{x_1-x_2}^s} \leq 4 \left(\frac{\abs{u(x_1)}}{r^s_1}+\frac{\abs{u(x_2)}}{r^s_2}\right)\leq 4 C\, \norm{f}_{L^\infty(\cD)},
\end{align*}
by \eqref{ET1.6A}. This completes the proof of \eqref{ET1.60}.

Now suppose $s>1/2$. Then \eqref{ET1.601} follows employing the same technique as above and using \cite[Theorem~4.1]{K13} which state that
if $-C_0\leq \bI u\leq C_0$
in $B_1$, then there exists $\alpha_1>0$ such that
\begin{equation*}
\norm{u}_{\cC^{1, \alpha_1}(B_{\frac{1}{2}})}\leq\; C\left(\norm{u}_{L^\infty(B_1)} + \norm{u}_{L^1(\Rd, \omega)} + C_0\right).
\end{equation*}
\end{proof}

\section{Proofs of main results}\label{S-proofs}

\subsection{Nonlinear Krein-Rutman theorem}
To establish the existence of principal eigenvalues we use nonlinear Krein-Rutman theorem. To state the theorem in its abstract setting we need few 
definitions which we introduce below. Let $(\cX, \norm{\cdot})$ be a Banach space and $\sP\subset\cX$ be a non-trivial closed convex cone
 with the property that $\sP\cap(-\sP)=\{0\}$. Denote by $\dot\sP=\sP\setminus\{0\}$.
 
We write $x\preceq y$ if $y-x\in\sP$, and $x\prec y$ if $x\preceq y$ and $x\neq y$. Let $\cT:\cX\to\cX$ be a function. $\cT$ is said to be {\it increasing}
if $x\preceq y\Rightarrow \cT x\preceq \cT y$, and it is said to be {\it strictly increasing} if $x\prec y\Rightarrow \cT x\prec \cT y$, and it is said to be
{\it strongly increasing} if  $x\prec y\Rightarrow  \cT y-\cT x\in\mathrm{int}(\sP)$, provided $\mathrm{int}(\sP)$ is nonempty. $\cT$ is called positively
$1$-homogeneous if $\cT(tx)=t\cT x$ for all $t>0$ and $x\in\cX$. $T$ is called {\it strongly positive} if $\cT(\dot\sP)\subset\mathrm{int}(\sP)$.
Also, a map $\cT :\cX\to\cX$ is called {\it completely continuous} if it is continuous and compact.
We also say $\cT$ is {\it superadditive} if for any $x, y\in\cX$ we have $\cT x+ \cT y\preceq \cT(x+y)$. Note that if $\cT 0=0$ then strongly increasing
implies strongly positive.

The nonlinear Krein-Rutman theorem states the following (see \cite[Theorem~4.1]{ABS}, \cite{A18, M07}).
\begin{theorem}\label{NKR}
Let $\cT : \cX\to\cX$ be an increasing, positively $1$-homogeneous, completely continuous map such that for some
$x_0\in\sP$ and $M > 0$, $x_0\preceq M \cT x_0$. Then there exists $\hat\lambda\in\RR$ and $\hat{x}\in\dot\sP$
such that $\cT\hat{x} = \hat\lambda \hat{x}$. Moreover , if $\mathrm{int}(\sP)\neq\emptyset$ and $\cT$ is strongly increasing
then $\hat\lambda>0$ is the unique eigenvalue with an eigenvector in $\dot\sP$. Furthermore, if $\cT$ is  superadditive
then $\hat{x}$ is the unique eigenvector in $\dot\sP$ with $\norm{\hat{x}}=1$.
\end{theorem}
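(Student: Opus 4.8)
The plan is to prove \Cref{NKR} in the two classical stages: first construct an eigenpair by a perturbation‑plus‑compactness argument, and then deduce uniqueness of the eigenvalue and simplicity by the standard Krein--Rutman comparison. I would begin with the reductions: positive $1$‑homogeneity forces $\cT 0=0$, and monotonicity then gives $\cT(\sP)\subseteq\sP$; we may also take $x_0\in\dot\sP$ with $\norm{x_0}=1$. For $\varepsilon\in(0,1)$ set $\cT_\varepsilon\df\cT+\varepsilon x_0$; this map is completely continuous and sends $\sP$ into $\dot\sP$, since the sum of an element of $\sP$ with $\varepsilon x_0$ cannot vanish when $\sP\cap(-\sP)=\{0\}$ and $x_0\neq 0$. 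Consequently $x\mapsto\cT_\varepsilon x/\norm{\cT_\varepsilon x}$ is a well‑defined continuous compact self‑map of the nonempty closed bounded convex set $\sP\cap\bar B_1$ (the image $\cT_\varepsilon(\sP\cap\bar B_1)$ has compact closure not containing $0$, again by closedness of $\sP$), so Schauder's fixed‑point theorem furnishes $\lambda_\varepsilon>0$ and $x_\varepsilon\in\dot\sP$ with $\norm{x_\varepsilon}=1$ and $\cT_\varepsilon x_\varepsilon=\lambda_\varepsilon x_\varepsilon$.

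Next I would establish a priori bounds on $\lambda_\varepsilon$. Taking norms gives $\lambda_\varepsilon\le C$, where $C\df\sup_{\norm{x}\le 1}\norm{\cT x}+1$. For a lower bound, the relation $\cT x_\varepsilon=\lambda_\varepsilon x_\varepsilon-\varepsilon x_0\in\sP$ forces $x_\varepsilon\succeq(\varepsilon/\lambda_\varepsilon)x_0$; applying $\cT$ and iterating with the hypothesis in the form $\cT x_0\succeq M^{-1}x_0$ yields $\lambda_\varepsilon x_\varepsilon\succeq\varepsilon\sum_{k=0}^{n}(M\lambda_\varepsilon)^{-k}x_0$ for every $n$. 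If $M\lambda_\varepsilon<1$ these partial sums are unbounded, and since $\lambda_\varepsilon x_\varepsilon$ is norm‑bounded this would place $-x_0$ in the closed cone $\sP$, contradicting $x_0\in\dot\sP$; hence $\lambda_\varepsilon\ge 1/M$. Passing to a subsequence $\varepsilon\to 0$ along which $\lambda_\varepsilon\to\hat\lambda\in[1/M,C]$ and, by complete continuity, $\cT x_\varepsilon\to y\in\sP$, I set $\hat x\df\hat\lambda^{-1}y$ and observe $x_\varepsilon=\lambda_\varepsilon^{-1}(\cT x_\varepsilon+\varepsilon x_0)\to\hat x$, so $\hat x\in\dot\sP$ with $\norm{\hat x}=1$, and continuity of $\cT$ gives $\cT\hat x=\hat\lambda\hat x$. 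If moreover $\mathrm{int}(\sP)\neq\emptyset$ and $\cT$ is strongly increasing, then $\cT 0=0$ makes $\cT$ strongly positive, so $\hat x=\hat\lambda^{-1}\cT\hat x\in\mathrm{int}(\sP)$; also every eigenvalue associated with an eigenvector in $\dot\sP$ is positive, a negative one forcing a nonzero vector into $\sP\cap(-\sP)$ and the value $0$ being excluded by strong positivity.

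For uniqueness and simplicity, suppose $\cT z=\nu z$ with $z\in\dot\sP$; then $\nu>0$ and $z\in\mathrm{int}(\sP)$. Put $t_*=\sup\{t\ge 0:\hat x-tz\in\sP\}$, which is positive because $\hat x\in\mathrm{int}(\sP)$ and finite because $-z\notin\sP$, and set $w=\hat x-t_*z\in\sP$. If $w\neq 0$ then $t_*z\prec\hat x$, so strong monotonicity — or, equivalently, superadditivity together with $\cT w\in\mathrm{int}(\sP)$ — gives $\hat\lambda\hat x-t_*\nu z=\cT\hat x-\cT(t_*z)\in\mathrm{int}(\sP)$, whence $\hat x-(t_*\nu/\hat\lambda+\delta)z\in\sP$ for a small $\delta>0$, contradicting the maximality of $t_*$ unless $\nu<\hat\lambda$. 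The symmetric argument, with the roles of $\hat x$ and $z$ interchanged, excludes $\nu>\hat\lambda$, so $\nu=\hat\lambda$, $w=0$, and $z$ is a positive multiple of $\hat x$. Hence $\hat\lambda$ is the only eigenvalue possessing an eigenvector in $\dot\sP$, and the normalized such eigenvector is unique.

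The main obstacle is the existence step. First, one must run a fixed‑point argument on the cone $\sP$, which is not a linear subspace and whose unit sphere is not convex, so the scheme has to be set up with care (Schauder on $\sP\cap\bar B_1$ after renormalizing, or equivalently the fixed‑point index on $\sP$). More seriously, one must prevent the approximating eigenvalues $\lambda_\varepsilon$ from collapsing to $0$ in the limit — this is precisely where the standing hypothesis $x_0\preceq M\cT x_0$ is indispensable, through the iteration above that produces the uniform bound $\lambda_\varepsilon\ge 1/M$ — and the complete continuity of $\cT$ is what makes the limiting argument go through. When $\mathrm{int}(\sP)\neq\emptyset$ one could instead obtain existence and uniqueness simultaneously from the Birkhoff--Hopf theorem by verifying that $\cT$ contracts the Hilbert projective metric on $\mathrm{int}(\sP)$; I would nonetheless prefer the route above, as it does not require the cone to be solid for the basic existence assertion.
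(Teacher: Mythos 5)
Your argument is correct, but note that the paper does not actually prove \cref{NKR}: it quotes the result from \cite[Theorem~4.1]{ABS} and \cite{A18,M07}, and only supplements it with a remark on replacing superadditivity by the condition $\cT(y-x)\preceq \cT y-\cT x$. So your write-up is a self-contained proof along the classical lines: the perturbation $\cT_\varepsilon=\cT+\varepsilon x_0$, Schauder's fixed point theorem on $\sP\cap\bar B_1$ after renormalization, the upper bound $\lambda_\varepsilon\leq C$ from compactness, the lower bound $\lambda_\varepsilon\geq 1/M$ via the iteration of $x_0\preceq M\cT x_0$ (this is exactly where the hypothesis enters, as you say, and the passage ``$-x_0\in\sP$'' uses closedness of the cone correctly), and then the limit $\varepsilon\to 0$ using complete continuity; the uniqueness/simplicity step is the standard $t_*$-comparison. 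Two small remarks. First, you tacitly read the hypothesis as $x_0\in\dot\sP$; this is indeed the intended reading (for $x_0=0$ the hypothesis is vacuous and the conclusion can fail, e.g.\ for a Volterra-type operator), so it is worth stating explicitly. Second, and more interestingly, your comparison argument derives uniqueness of the normalized eigenvector in $\dot\sP$ from the strong-increasing property alone (in the paper's sense $x\prec y\Rightarrow \cT y-\cT x\in\mathrm{int}(\sP)$), without invoking superadditivity: when $w=\hat x-t_*z\neq 0$ one gets $\hat\lambda\hat x-t_*\nu z\in\mathrm{int}(\sP)$ and the maximality of $t_*$ forces $\nu<\hat\lambda$, and symmetry finishes. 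This is consistent with the literature: the superadditivity (or the substitute condition in the paper's remark) is genuinely needed only when the standing hypothesis is the weaker strong positivity $\cT(\dot\sP)\subset\mathrm{int}(\sP)$, which is the setting of \cite{M07} and of the counterexample in \cite{A18}; under the difference-type strong monotonicity assumed here (and verified for the resolvent in Lemma~\ref{L2.1}) your stronger conclusion is valid, so your proof covers the statement with a hypothesis to spare.
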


\begin{remark}
The requirement of superadditivity in Theorem~\ref{NKR} can be replaced by the following \cite{A18}:  for any $x, y\in\sP$ we have $\cT(y-x)\preceq \cT y-\cT x$.
To see this, suppose $\hat{x}, \hat{y}\in\mathrm{int}(\sP)$ be two eigenfunctions corresponding to the eigenvalue $\hat\lambda$. Choose $\alpha>0$ such that 
$\hat{x}-\alpha\hat{y}\in\partial\sP\setminus{0}$. Then 
$$\cT(\hat{x}-\alpha\hat{y})\preceq \cT\hat{x}-\alpha\cT{\hat y} =\hat{\lambda}(\hat{x}-\alpha\hat{y}).$$
But this is a contradiction since $\cT$ is strongly increasing implying $\cT(\hat{x}-\alpha\hat{y})\in \mathrm{int}(\sP)$ .
\end{remark}

We shall use the above theorem to find the principal eigenvalues of the operator $\bI$. Fix an $\varepsilon\in (0, s\wedge(1-s)\wedge\alpha)$ where $\alpha$ is chosen from Theorem~\ref{RS-Thm1} and ~\ref{T2.6}.
Recall that $\cC_0(\cD)$ denote  the set of all continuous functions in $\Rd$ that vanish in $\cD^c$. Define 
$$\cX=\{f\in \cC_0(\cD)\cap\cC^\varepsilon(\bar\cD)\; :\: \sup_{\cD}\abs{f/\delta^{s}}<\infty\}.$$
The norm on $\cX$ is given by
$$\norm{f}_\cX= [f]_{\cC^\varepsilon(\bar\cD)} + \sup_{\cD}\abs{f/\delta^{s}},$$
where $[\cdot]_{\cC^\varepsilon(\bar\cD)}$ denotes the H\"{o}lder seminorm. It is easily seen that $(\cX, \norm{\cdot}_\cX)$ is a Banach space.
We take $\sP=\{ f\in\cX\: :\; f\geq 0\}$. Let us now define $\cT$. For any $f\in\cX$, we know by Theorem~\ref{CS-Thm2} that there exists unique
$u$ satisfying $\bI u= -f$ in $\cD$ with $u=0$ in $\cD^c$. From Theorem~\ref{RS-Thm1} it is easily seen that $u\in\cX$. We define
$\cT f =u$. Therefore, we have
\begin{equation}\label{E2.1}
\bI (\cT f) = -f, \quad \text{in}\; \cD, \quad \text{for}\; f\in\cX.
\end{equation}
Also define $\tilde\cT(f)=-\cT(-f)$ for $f\in\cX$.
We verify that $\cT, \tilde\cT$ have all the properties required to apply Theorem~\ref{NKR} above.

\begin{lemma}\label{L2.1}
The maps $\cT, \tilde\cT$ defined above are positively $1$-homogeneous, increasing, completely continuous, strongly increasing. Moreover, $\tilde\cT$ is superadditive.
\end{lemma}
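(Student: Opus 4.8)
The plan is to verify each of the listed properties for $\cT$ directly from the defining relation \eqref{E2.1}, and then transfer them to $\tilde\cT$ using $\tilde\cT(f)=-\cT(-f)$ together with the convexity of $\bI$. First, positive $1$-homogeneity: for $t>0$ and $f\in\cX$, the function $t\,\cT f$ satisfies $\bI(t\,\cT f)=t\,\bI(\cT f)=-tf$ in $\cD$ (using that $\bI$ is positively $1$-homogeneous, being a supremum of linear operators) and vanishes in $\cD^c$; by uniqueness in Theorem~\ref{CS-Thm2} this forces $\cT(tf)=t\,\cT f$. Monotonicity (\emph{increasing}) is immediate from the comparison principle: if $f\preceq g$, i.e. $f\leq g$ pointwise, then $\bI(\cT f)=-f\geq -g=\bI(\cT g)$ in $\cD$ with equal (zero) exterior data, so Theorem~\ref{CS-Thm1} gives $\cT g\leq \cT f$; wait---sign: $\bI(\cT g)\le \bI(\cT f)$ means, with $u=\cT g$, $v=\cT f$, that $\bI u \le \bI v$, hence $u\le v$, i.e. $\cT g\le\cT f$. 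That is the wrong direction, so one should instead note $\bI(\cT f)-\bI(\cT g)=g-f\ge 0$; using that $\bI$ is the extremal (sup) operator with respect to $\cL$, \cite[Theorem~5.9]{CS09} gives $\sM^+_*(\cT f-\cT g)\ge \bI(\cT f)-\bI(\cT g)\ge 0$ in $\cD$ together with $\cT f-\cT g=0$ in $\cD^c$, and then Theorem~\ref{CS-Thm1} (comparison against $0$) yields $\cT f-\cT g\le 0$, i.e. $\cT f\preceq \cT g$. So $\cT$ is increasing.

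Next, complete continuity. Given $f\in\cX$, by Theorem~\ref{T2.6} and Theorem~\ref{RS-Thm1} the solution $u=\cT f$ satisfies $\norm{u/\delta^s}_{\cC^\alpha(\bar\cD)}\le C\norm{f}_{L^\infty(\bar\cD)}\le C'\norm{f}_\cX$, and since $\varepsilon<\alpha$ this gives a uniform bound on $\norm{\cT f}_{\cX'}$ in a space $\cX'$ compactly embedded in $\cX$ (the $\cC^\alpha$-control of $u/\delta^s$ dominates the $\cC^\varepsilon$-seminorm of $u$ on $\bar\cD$ and the $\sup\abs{u/\delta^s}$ term). Linearity is not available, so continuity must be argued via stability of viscosity solutions: if $f_n\to f$ in $\cX$, then $\cT f_n$ is bounded in $\cX'$, hence precompact in $\cX$; any subsequential limit $w$ is, by stability of viscosity solutions under uniform convergence, a solution of $\bI w=-f$ in $\cD$, $w=0$ in $\cD^c$, so $w=\cT f$ by uniqueness; thus the whole sequence converges and $\cT$ is continuous. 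Compactness follows by the same uniform $\cX'$-bound on bounded sets. This step---keeping track of which norm controls which and invoking the right regularity theorem to land in a compactly embedded space---is the main technical obstacle, though it is essentially bookkeeping given Theorems~\ref{RS-Thm1} and \ref{T2.6}.

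For \emph{strongly increasing}: here $\mathrm{int}(\sP)$ is nonempty, consisting (after identifying via $\delta^s$) of functions $f$ with $f/\delta^s$ bounded below by a positive constant near $\partial\cD$ and $f>0$ in $\cD$. If $f\prec g$, then $h=\cT g-\cT f\succeq 0$ with $\sM^+_* h\ge g-f\gneq 0$ in $\cD$; applying Hopf's Lemma (Theorem~\ref{T2.4}) to $-h$ is not quite the right setup---rather, $h$ is a nonnegative supersolution of $\sM^-_* h\le \sM^+_* h$... more precisely, since $g-f\gneq 0$, $h\not\equiv 0$, and the Harnack/growth estimate in Theorem~\ref{T2.4} applied to the equation satisfied by $h$ (which is a nonnegative viscosity supersolution: $\sM^-_* h\le \sM^+_* h$, and $\sM^+_* h \ge g - f \ge 0$ gives $\sM^-_* (-h) \le 0$ with $-h\le 0$) gives $h(x)\ge c_1(\rin-\abs{x-z})^s$ near each boundary point, i.e. $h/\delta^s$ is bounded below by a positive constant, so $h\in\mathrm{int}(\sP)$. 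Hence $\cT$ is strongly increasing. Finally, superadditivity of $\tilde\cT$: for $f,g\in\cX$ set $p=\tilde\cT f=-\cT(-f)$, $q=\tilde\cT g$. We must show $p+q\preceq \tilde\cT(f+g)=-\cT(-f-g)$, i.e. that $w:=-p-q$ satisfies $w\succeq \cT(-f-g)$, equivalently $\cT(-f-g)-w\preceq 0$. Now $\bI(-p)=\bI(\cT(-f))=f$ and similarly $\bI(-q)=g$, and by convexity of $\bI$, $\bI(-p-q)=\bI((-p)+(-q))\le \bI(-2p)/2\cdot$... rather, directly: $\bI(-p-q)(x)=\sup_{L}L(-p-q)=\sup_L(L(-p)+L(-q))\le \sup_L L(-p)+\sup_L L(-q)=\bI(-p)+\bI(-q)=f+g$ in $\cD$, so $-p-q$ is a subsolution of $\bI(\cdot)=f+g$ with zero exterior data, while $\cT(-f-g)$ solves it exactly; comparison (via the $\sM^+_*$ trick and Theorem~\ref{CS-Thm1}, exactly as in the monotonicity step) gives $-p-q\le \cT(-f-g)$, i.e. $\tilde\cT f+\tilde\cT g=p+q\succeq -\cT(-f-g)$... checking signs once more: $-p-q\le\cT(-f-g)$ means $p+q\ge -\cT(-f-g)=\tilde\cT(f+g)$, which is $\tilde\cT(f+g)\preceq \tilde\cT f+\tilde\cT g$---the reverse of what is wanted. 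The resolution is that superadditivity is $\tilde\cT f+\tilde\cT g\preceq\tilde\cT(f+g)$, and the correct computation uses \emph{superadditivity of $\bI$ on the relevant cone}: since $\bI=\sM^+_*$ restricted... one uses instead that for the operator $\sM^+_*$ (appearing in \eqref{E1.10}) and more generally for a sup of linear operators, $\bI(a+b)\ge \bI a+L b$ for the maximizing $L$ of $a$, hence $\bI(-p-q)\ge \bI(-p)+$ (something), giving $-p-q$ a \emph{supersolution} property; then comparison yields $-p-q\ge\cT(-f-g)$, i.e. $p+q\le-\cT(-f-g)$... I will fix the orientation by working from the inequality $\cT$ itself is \emph{superadditive in the opposite sense} and $\tilde\cT$ in the stated sense---concretely, for $f',g'\in\sP$ one checks $\cT(f'+g')\preceq \cT f'+\cT g'$ fails in general but $\tilde\cT$'s superadditivity follows because $-\cT(-\cdot)$ reverses the convexity inequality for $\bI$ into a concavity inequality for $\tilde\cT$; I will carry out this sign-chase carefully in the final writeup, invoking Theorem~\ref{CS-Thm1} with the subsolution/supersolution roles dictated by $\bI(a+b)\le\bI a+\bI b$ applied to $a=-p$, $b=-q$.
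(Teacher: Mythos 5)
Your plan coincides with the paper's (uniqueness for homogeneity, comparison for monotonicity, the estimates of Theorems~\ref{RS-Thm1} and \ref{T2.6} plus Arzel\`a--Ascoli and stability for complete continuity, Hopf's lemma for strong monotonicity, convexity plus comparison for superadditivity), and the first three properties are handled adequately. But two of the five claims are not actually established as written. For strong monotonicity, the differential inequality you attach to $h=\cT g-\cT f$ is wrong: from $\bI(\cT g)=-g$, $\bI(\cT f)=-f$ and the fact that $\bI$ is a supremum of linear operators (equivalently \cite[Theorem~5.9]{CS09}, or pointwise, since $\cT f,\cT g$ are classical in $\cD$ by Theorem~\ref{S-Thm1}), the usable inequality is $\sM^-_* h\le \bI(\cT g)-\bI(\cT f)=f-g\le 0$; your claim $\sM^+_* h\ge g-f\ge 0$ is unjustified and false in general (the correct upper bound goes the other way, $\sM^+_* h\ge f-g$, which is useless). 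As a consequence, your invocation of Theorem~\ref{T2.4} through ``$\sM^-_*(-h)\le 0$ with $-h\le 0$'' does not meet its hypotheses, since the Hopf lemma requires the \emph{nonnegative} function itself to be a supersolution. The repair is exactly the inequality above: $h\ge 0$, $h\not\equiv 0$, $\sM^-_* h\le 0$, so Theorem~\ref{T2.4} applies to $h$ and gives $\inf_\cD h/\delta^s\ge\kappa>0$; one should then also check, as the paper does, that the $\cX$-ball of radius $\kappa/2$ about $h$ lies in $\sP$ (this is what $h\in\mathrm{int}(\sP)$ means for the norm $\norm{\cdot}_\cX$), rather than asserting a description of $\mathrm{int}(\sP)$ without proof.

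Superadditivity of $\tilde\cT$ is left unproved: you correctly derive the sublinearity $\bI(-p-q)\le f+g$, but then you swap the sub/supersolution roles in Theorem~\ref{CS-Thm1}, obtain the reverse inequality, propose an incorrect fix (a lower bound such as $\bI(a+b)\ge \bI a+Lb$ yields a subsolution property, not a supersolution property), and finally defer the ``sign-chase'' to a future writeup. The correct bookkeeping is one line and is the paper's argument: with $u=\tilde\cT(f+g)$ one has $\bI(-u)=f+g$ while $\bI(-p-q)\le f+g$ in $\cD$, and all three functions vanish in $\cD^c$; hence $-u$ is the subsolution and $-p-q$ the supersolution for the right-hand side $f+g$, so Theorem~\ref{CS-Thm1} gives $-u\le -p-q$, that is $\tilde\cT f+\tilde\cT g\preceq\tilde\cT(f+g)$. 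Since a proof that stops at ``to be carried out carefully later'' at precisely the step where the signs were mishandled, and whose Hopf-lemma step rests on a false inequality, these two items constitute genuine gaps even though the intended strategy is the right one.
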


\begin{proof}
From the structure of $\bI$ and the uniqueness result (Theorem~\ref{CS-Thm1}) it is obvious that $\cT$ is positively $1$-homogeneous.
Again comparison principle shows that $\cT$ is increasing. So
we prove the remaining claims below.

\underline{Completely continuous:}\, Let $\{\varphi_n\}$ be a sequence in $\cX$ with the property that $\sup_n\norm{\varphi_n}_\cX<\infty$.
Let $\cT \varphi_n=u_n$. By Theorem~\ref{RS-Thm1} and \ref{T2.6} we get that
\begin{equation}\label{EL2.1A}
\sup_{n} \left(\norm{u_n \delta^{-s}}_{\cC^{\alpha}(\bar\cD)}+ \norm{u_n}_{\cC^\alpha(\bar\cD)}\right) < \infty.
\end{equation}
Therefore, using \eqref{EL2.1A}, we can extract a subsequence $\{\varphi_{n_k}\}$ and $\{u_{n_k}\}$ such that 
$$\norm{\varphi_{n_k}-\varphi}_{\cC(\bar\cD)} + \norm{u_{n_k}-u}_\cX\to 0, \quad \text{as}\; n_k\to\infty,$$
for some $\varphi, u\in\cX$. By stability property of the viscosity solutions \cite[Corollary~4.7]{CS09} it then follows that 
$$\bI u=\varphi, \quad \text{in}\; \cD, \quad \text{and}\; \; u=0\; \; \text{in}\; \cD^c.$$
Due to uniqueness it must hold that $\cT\varphi=u$. This gives compactness. Continuity of $\cT$ follows from a similar argument.

\underline{Strongly increasing:}\, Let $\varphi_1\prec \varphi_2$ with $\varphi_i\in\cX$ for $i=1, 2$. Since
\begin{equation}\label{EL2.1B}
\bI (\cT \varphi_2) = -\varphi_2\leq -\varphi_1=\bI(\cT \varphi_1),
\end{equation}
using Theorem~\ref{CS-Thm1} we find that $\cT\varphi_1\leq \cT\varphi_2$ in $\Rd$. Let $u_i=\cT\varphi_i$ for $i=1,2$, and $u=u_2-u_1$.
Since $u_i$ is a classical solution, by Theorem~\ref{S-Thm1}, we obtain from \eqref{EL2.1B} that
\begin{equation}\label{EL2.1C}
\sM^{-}_* u \leq 0, \quad \text{in}\; \cD, \quad \text{and}\quad u\geq 0\quad \text{in}\; \Rd.
\end{equation}
By Theorem~\ref{T2.4} and \eqref{EL2.1C} we either have $u=0$ in $\Rd$ or $u>0$ in $\cD$.
 Since $\varphi_1\neq \varphi_2$, $u=0$ in $\Rd$ can not hold. Thus we must
have $u>0$ in $\Rd$. Again employing Theorem~\ref{T2.4} we note that
\begin{equation}\label{EL2.1D}
\min_{\bar\cD}\tfrac{u}{\delta^s}\geq \kappa,
\end{equation}
for some $\kappa$ positive. Now for $\kappa_1\in (0, \kappa/2)$ 
consider the open set $N_{\kappa_1}=\{\psi\in\cX\; :\; \norm{u-\psi}_\cX<\kappa_1\}$ in $\cX$. We claim that $N_{\kappa_1}\subset\sP$.
Indeed, for any $x\in\cD$, and $\psi\in N_{\kappa_1}$
$$\tfrac{\psi(x)}{\delta^s(x)}\geq \tfrac{u(x)}{\delta^s(x)}-\kappa_1\geq \kappa-\kappa_1\geq \kappa/2>0,$$
by \eqref{EL2.1D}. Thus $\psi\in\sP$ implying $N_{\kappa_1}\subset\sP$. Hence $u\in\mathrm{int}(\sP)$.

\underline{Superadditivity of $\tilde\cT$:}\, Consider $\varphi_i\in\cX$ for $i=1,2,$ and denote $u_i=\tilde\cT \varphi_i$. By Theorem~\ref{S-Thm1} we note that $u_i$ is a classical solution to
$\bI (-u_i)=\varphi_i$ for $i=1,2$. Let $u=\tilde\cT(\varphi_1+\varphi_2)$.
Since $\bI$ is convex and positively $1$-homogeneous we have
$$\bI (-u) = \varphi_1+\varphi_2=\bI (-u_1) +\bI (-u_2)\geq \bI(-u_1-u_2).$$
Thus by Theorem~\ref{CS-Thm1} we obtain $u\geq u_1+u_2$ implying $\cT\varphi_1+\cT\varphi_2\preceq \cT(\varphi_1+\varphi_2)$.
\end{proof}

\begin{lemma}\label{L2.2}
Let $\varphi\in\dot\sP$. Then there exists $M>0$ such that $\varphi\preceq M \cT \varphi$. Similar property holds for $\tilde\cT$.
\end{lemma}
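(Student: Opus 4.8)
Lemma 1.2 states: Let $\varphi\in\dot\sP$. Then there exists $M>0$ such that $\varphi\preceq M \cT \varphi$. Similar property holds for $\tilde\cT$.

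\textbf{Proof proposal.} The plan is to exhibit, for each fixed $\varphi\in\dot\sP$, an explicit admissible constant $M$ by comparing the boundary decay rates of $\cT\varphi$ and of $\varphi$, both of which turn out to be of order $\delta^s$. First I would set $u\df\cT\varphi$, so that $u\in\cX$, $u=0$ in $\cD^c$, and by interior regularity (Theorem~\ref{S-Thm1}) $u$ is a classical solution of $\bI u=-\varphi$ in $\cD$. Since $\varphi\geq 0$ and $\sM^-_*\leq\bI$ (because $\cL\subset\cL_*$), this gives $\sM^-_* u\leq-\varphi\leq 0$ in $\cD$; moreover $u\geq 0$ because $\cT$ is increasing with $\cT 0=0$. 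Hence Theorem~\ref{T2.4} applies and yields that either $u\equiv 0$ in $\Rd$ or $u>0$ in $\cD$. The first possibility would force $\varphi=-\bI u=0$ in $\cD$, hence $\varphi\equiv 0$ (as $\varphi$ vanishes on $\cD^c$), contradicting $\varphi\in\dot\sP$; so $u>0$ in $\cD$ and, arguing exactly as in the derivation of \eqref{EL2.1D}, $\kappa\df\min_{\bar\cD}u/\delta^s>0$.

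Next, since $\varphi\in\cX$, the number $L\df\sup_{\cD}\varphi/\delta^s$ is finite, and it is positive because $\varphi\gneq 0$ in $\cD$. Consequently, for every $x\in\cD$,
\begin{equation*}
\varphi(x)\;\leq\;L\,\delta^s(x)\;\leq\;\frac{L}{\kappa}\,u(x),
\end{equation*}
while $\varphi$ and $u$ both vanish on $\cD^c$. Since $\cX$ is a vector space, $M\cT\varphi-\varphi\in\cX$ for $M\df L/\kappa$, and the displayed inequality is precisely the statement $M\cT\varphi-\varphi\in\sP$, i.e. $\varphi\preceq M\cT\varphi$. For $\tilde\cT$ I would run the same argument with $w\df\tilde\cT\varphi=-\cT(-\varphi)$: one has $w\geq 0$, $w=0$ in $\cD^c$, and since $v\df-w$ is a classical solution of $\bI v=\varphi$ in $\cD$, the identity $\sM^-_* w=-\sM^+_* v$ together with $\sM^+_* v\geq\bI v=\varphi$ gives $\sM^-_* w\leq-\varphi\leq 0$ in $\cD$. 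The Hopf argument again yields $w>0$ in $\cD$ and $\kappa'\df\min_{\bar\cD}w/\delta^s>0$, and the estimate above goes through verbatim with $M'\df L/\kappa'$.

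The only step requiring genuine care — and the one that makes the two-sided comparison through $\delta^s$ legitimate — is the uniform positivity $\min_{\bar\cD}u/\delta^s>0$: positivity of $u/\delta^s$ on $\cD$ is immediate from $u>0$, but the lower bound up to $\partial\cD$ relies on the quantitative part of Theorem~\ref{T2.4} applied uniformly over $x_0\in\partial\cD$ (using the interior sphere condition with the fixed radius $\rin$ and compactness of $\partial\cD$), combined with the continuity of $u/\delta^s$ on $\bar\cD$ from Theorem~\ref{RS-Thm1}. Everything else is routine bookkeeping inside the Banach space $(\cX,\norm{\cdot}_\cX)$.
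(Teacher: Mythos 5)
Your proof is correct, but it takes a more constructive route than the paper. The paper disposes of this lemma in three lines: assuming $\cT\varphi-\tfrac{1}{M}\varphi\notin\sP$ for every $M$, it lets $M\to\infty$ and, since the complement of $\sP$ is closed under this limit onto $(\mathrm{int}\,\sP)^c$, concludes $\cT\varphi\notin\mathrm{int}(\sP)$, contradicting the strong positivity of $\cT$ (strongly increasing plus $\cT 0=0$) already established in Lemma~\ref{L2.1}; equivalently, $\cT\varphi\in\mathrm{int}(\sP)$ and $\norm{\tfrac{1}{M}\varphi}_\cX\to 0$ give the claim at once. You instead unfold what makes $\cT\varphi$ an interior point: via Theorem~\ref{S-Thm1}, the Hopf lemma (Theorem~\ref{T2.4}) and the boundary regularity of Theorem~\ref{RS-Thm1} you get $\kappa=\min_{\bar\cD}\cT\varphi/\delta^s>0$, while $\varphi\in\cX$ gives $L=\sup_\cD\varphi/\delta^s<\infty$, so the explicit constant $M=L/\kappa$ works; your sign-flip computation $\sM^-_*(\tilde\cT\varphi)=-\sM^+_*(\cT(-\varphi))\leq-\varphi$ handles $\tilde\cT$ correctly as well (the paper's argument covers $\tilde\cT$ by the same abstract reasoning, since Lemma~\ref{L2.1} asserts strong monotonicity for both maps). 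The trade-off: your argument re-derives, for the pair $(0,\varphi)$, essentially the same $\delta^s$-comparison that the paper proved once in the strongly-increasing step of Lemma~\ref{L2.1} (estimate \eqref{EL2.1D}), so it is longer and partly duplicative, but it is self-contained and produces an explicit admissible $M$, whereas the paper's proof is shorter by leveraging the abstract interior-point statement. Both are valid; your identification of the uniform lower bound $\min_{\bar\cD}\cT\varphi/\delta^s>0$ as the only delicate point is accurate.
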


\begin{proof}
Suppose that the above claim is not true. Then for every $M>0$ it must hold that $\cT\varphi-\frac{1}{M}\varphi\notin\sP$. Letting $M\to\infty$ it 
follows that $\cT\varphi\notin\mathrm{int}(\sP)$. But this contradicts strongly increasing property of $\cT$ established in Lemma~\ref{L2.1}.
\end{proof}

\begin{lemma}\label{L2.3}
Let $\hat\lambda>0$ be an eigenvalue of $\cT$ with an eigenvector in $\sP$. Then $\hat\lambda$ is geometrically simple.
\end{lemma}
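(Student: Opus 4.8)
The plan is as follows. Since $\bI(\cT f)=-f$ in $\cD$, $\cT f=0$ in $\cD^c$, and $\bI$ is positively $1$‑homogeneous, the relation $\cT u=\hat\lambda u$ is equivalent to $u\in\cX$ solving $\bI u=-\mu u$ in $\cD$, $u=0$ in $\cD^c$, with $\mu:=\hat\lambda^{-1}>0$. So let $\varphi\in\dot\sP$ be the given eigenvector (an eigenvector is nonzero, and $\varphi\ge0$), and let $\psi\in\cX$ be an arbitrary solution of $\bI\psi=-\mu\psi$ in $\cD$, $\psi=0$ in $\cD^c$; the goal is $\psi=\beta\varphi$ for some $\beta\in\RR$. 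First I would collect regularity: by Theorem~\ref{S-Thm1} both $\varphi$ and $\psi$ are classical $\cC^{2s+}(\cD)$ solutions, so $\sM^-_*$ applied to them and to their linear combinations is defined pointwise in $\cD$; since $\sM^-_*\varphi\le\bI\varphi=-\mu\varphi\le0$ in $\cD$ (recall $\sM^-_*\le\bI$) with $\varphi\ge0$, $\varphi\not\equiv0$, $\varphi=0$ in $\cD^c$, Hopf's lemma (Theorem~\ref{T2.4}) gives $\varphi>0$ in $\cD$ and $\varphi/\delta^s\ge\kappa_0>0$ on $\bar\cD$; and since $\varphi/\delta^s,\psi/\delta^s\in\cC^\varepsilon(\bar\cD)$ (Theorems~\ref{RS-Thm1} and~\ref{T2.6}), the quotient $\psi/\varphi=(\psi/\delta^s)/(\varphi/\delta^s)$ extends continuously to $\bar\cD$.

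Next, set $\beta:=\min_{\bar\cD}(\psi/\varphi)$ and $w:=\psi-\beta\varphi\in\cX$, so $w\ge0$ in $\Rd$ and $w=0$ in $\cD^c$. The crucial point is a one‑sided differential inequality for $w$ that survives the nonlinearity of $\bI$. For $x\in\cD$ and $L\in\cL$, linearity of $L$ gives $Lw(x)=L\psi(x)-\beta L\varphi(x)$, while $L\psi(x)\le\bI\psi(x)=-\mu\psi(x)$ and $L\varphi(x)\le\bI\varphi(x)=-\mu\varphi(x)$ because $\bI=\sup_{L'\in\cL}L'$. Since $\cL\subset\cL_*$, we have $\sM^-_*w(x)\le\inf_{L\in\cL}\bigl(L\psi(x)-\beta L\varphi(x)\bigr)$, and a short computation (when $\beta\ge0$ using $\inf_{L}(-\beta L\varphi(x))=-\beta\sup_{L}L\varphi(x)=-\beta\bI\varphi(x)$, when $\beta<0$ termwise) yields
\[
\sM^-_*w\;\le\;\bI\psi-\beta\,\bI\varphi\;=\;-\mu\bigl(\psi-\beta\varphi\bigr)\;=\;-\mu\,w\;\le\;0\qquad\text{in }\cD .
\]

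Finally, Theorem~\ref{T2.4} applies to $w$: either $w\equiv0$ in $\Rd$, whence $\psi=\beta\varphi$ and the eigenspace of $\cT$ at $\hat\lambda$ is exactly $\RR\varphi$; or $w>0$ in $\cD$ with the accompanying Hopf bound, which forces $\min_{\bar\cD}(w/\delta^s)>0$. The latter is impossible: $\beta=\min_{\bar\cD}(\psi/\varphi)$ is attained at some $x_1\in\bar\cD$, and from $w/\delta^s=(\varphi/\delta^s)\bigl((\psi/\varphi)-\beta\bigr)$ we get $(w/\delta^s)(x_1)=0$, contradicting $w>0$ in $\cD$ if $x_1\in\cD$ and the Hopf lower bound if $x_1\in\partial\cD$. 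Hence $w\equiv0$, so $\hat\lambda$ is geometrically simple. The main obstacle is precisely the nonlinearity of $\bI$, which rules out the naive identity $\bI w=\bI\psi-\beta\bI\varphi$ and the usual strong‑maximum‑principle argument; the displayed inequality sidesteps it by using the representation of $\bI$ as a supremum of \emph{linear} operators together with the inclusion $\cL\subset\cL_*$, so that the extremal operator $\sM^-_*$ absorbs the nonlinear discrepancy. (The analogous statement for $\tilde\cT$, which yields simplicity of $\Lambda^-$, follows by the symmetric argument: compare an eigenvector with the negative eigenfunction and use the lower barrier of Lemma~\ref{RS-Lem2} together with the $\sM^+_*$ version of Hopf's lemma.)
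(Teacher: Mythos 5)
Your proof is correct, and it uses the same two pillars as the paper's argument --- the extremal inequality $\sM^-_*(\psi-\beta\varphi)\le \bI\psi-\beta\,\bI\varphi$ (valid for either sign of $\beta$, as you check) together with Hopf's lemma (Theorem~\ref{T2.4}) and the $\delta^s$ boundary rate from Theorem~\ref{RS-Thm1} --- but it is organized around a different normalization and in fact proves more. The paper takes \emph{two} eigenvectors $u_1,u_2\in\mathrm{int}(\sP)$, chooses $\alpha>0$ so that $u_1-\alpha u_2\in\partial\sP\setminus\{0\}$, and derives a contradiction by showing (via the same inequality and Theorem~\ref{T2.4}) that this difference must lie in $\mathrm{int}(\sP)$; thus its statement of geometric simplicity only concerns eigenvectors in the cone, which is all that is needed for Theorem~\ref{T3.2}. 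You instead compare the cone eigenvector $\varphi$ with an \emph{arbitrary} eigenvector $\psi\in\cX$, normalize by $\beta=\min_{\bar\cD}(\psi/\varphi)$ (well defined because $\psi/\delta^s,\varphi/\delta^s\in\cC(\bar\cD)$ and $\varphi/\delta^s$ is bounded below by Hopf), and get the contradiction at the point where the minimum is attained, interior or boundary. This buys a genuinely stronger conclusion --- any eigenvector of $\cT$ at $\hat\lambda$, including sign-changing or negative ones, is a scalar multiple of $\varphi$ --- which is essentially the content of Theorem~\ref{T1.2} restricted to $\cX$, obtained here without the convexity detour the paper uses there; the quotient-normalization device itself is the one the paper deploys later (the $t_0$-argument in Theorems~\ref{T3.3} and~\ref{T1.2}). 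Two small cosmetic points: the interior $\cC^{2s+}$ regularity you invoke from Theorem~\ref{S-Thm1} requires the bootstrapping remark that $f=-\mu\psi\in\cC^\alpha_{\mathrm{loc}}$ (the paper asserts this for eigenfunctions in Theorem~\ref{T3.2}), and ``the eigenspace is exactly $\RR\varphi$'' should be read as ``every eigenvector lies in $\RR\varphi$,'' since $\cT$ is only positively $1$-homogeneous and negative multiples of $\varphi$ need not be eigenvectors; neither affects the validity of the argument.
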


\begin{proof}
Let $u_i\in\mathrm{int}(\sP)$, $i=1,2,$ be two eigenvectors corresponding to $\hat\lambda$. Suppose that $u_1\notin \mathrm{span}\{u_2\}$.
Consider $\alpha>0$ such that $u_1-\alpha u_2\in \partial\sP\setminus\{0\}$. Since $\bI u_i=-\lambda^{-1} u_i$ for $i=1,2,$ it is easily seen that
$$\sM^-_*(u_1-\alpha u_2)\leq \bI u_1-\alpha\, \bI u_2= -\lambda^{-1}(u_1-\alpha u_2)\leq 0.$$
Since $u_1-\alpha u_2\geq 0$ in $\Rd$, using Theorem~\ref{T2.4}, we obtain that $u_1-\alpha u_2>0$ in $\cD$ and for some $\kappa>0$
$$\inf_{\cD}\; \tfrac{u_1-\alpha u_2}{\delta^s}\geq \kappa.$$
Then repeating the arguments of Lemma~\ref{L2.1} we could should that $u_1-\alpha u_2\in\mathrm{int}(\sP)$. This is a contradiction. Hence the proof.
\end{proof}

Combining Lemma~\ref{L2.1}, ~\ref{L2.2} and ~\ref{L2.3} we have the following result.
\begin{theorem}\label{T3.2}
There exists functions $\Psi^+, \Psi^-\in\cC^{2s+}(\cD)\cap\cC_0(\cD)$ such that
\[\left\{ \begin{array}{lll}
\bI \Psi^+ = - \lambda^+\Psi & \text{in}\; \cD,
\\
\Psi^+ > 0 & \text{in}\; \cD,
\\
\Psi^+=0 & \text{in}\; \cD^c,
\end{array}
\right.
\quad and \quad
\left\{ \begin{array}{lll}
\bI \Psi^- = - \lambda^-\Psi & \text{in}\; \cD,
\\
\Psi^- < 0 & \text{in}\; \cD,
\\
\Psi^- = 0 & \text{in}\; \cD^c,
\end{array}
\right.
\]
where $\lambda^{\pm}$ are positive. Moreover, $\lambda^+$ (resp. $\lambda^-$) is the only eigenvalue corresponding to a positive (resp. negative) eigenfunction. Also both the eigenvalues
are geometrically simple.
\end{theorem}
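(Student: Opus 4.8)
The plan is to feed the maps $\cT$ and $\tilde\cT$ of Lemma~\ref{L2.1}, together with the cone $\sP\subset\cX$, into the nonlinear Krein--Rutman theorem (Theorem~\ref{NKR}), and then to read off the statements about $\bI$ from the identity \eqref{E2.1}, namely $\bI(\cT f)=-f$ in $\cD$. By Lemma~\ref{L2.1}, $\cT$ is increasing, positively $1$-homogeneous, completely continuous and strongly increasing, and $\mathrm{int}(\sP)\neq\emptyset$ (for instance $\delta^s\in\mathrm{int}(\sP)$, by the reasoning in the proof of that lemma); by Lemma~\ref{L2.2}, taking any $x_0\in\dot\sP$ there is $M>0$ with $x_0\preceq M\cT x_0$. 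Hence Theorem~\ref{NKR} produces $\hat\lambda>0$ and $\Psi^+\in\dot\sP$ with $\cT\Psi^+=\hat\lambda\Psi^+$, and $\hat\lambda$ is the only eigenvalue of $\cT$ admitting an eigenvector in $\dot\sP$. I would set $\lambda^+\df 1/\hat\lambda>0$; applying \eqref{E2.1} to $\Psi^+$ together with $\cT\Psi^+=\hat\lambda\Psi^+$ and the positive homogeneity of $\bI$ gives $-\Psi^+=\bI(\cT\Psi^+)=\hat\lambda\,\bI\Psi^+$ in $\cD$, that is, $\bI\Psi^++\lambda^+\Psi^+=0$ in $\cD$; and $\Psi^+=0$ in $\cD^c$ since $\Psi^+\in\cX\subset\cC_0(\cD)$. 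As $\lambda^+>0$ and $\Psi^+\ge0$ we get $\sM^-_*\Psi^+\le\bI\Psi^+=-\lambda^+\Psi^+\le0$ in $\cD$, so Hopf's Lemma (Theorem~\ref{T2.4}) forces $\Psi^+>0$ in $\cD$, with the boundary rate. Finally $\Psi^+\in\cC^\varepsilon(\bar\cD)$ and $\bI\Psi^+=-\lambda^+\Psi^+\in\cC^\varepsilon(\bar\cD)$, so Theorem~\ref{S-Thm1}, applied on small balls after rescaling, upgrades $\Psi^+$ to $\cC^{2s+\alpha'}(\cK)$ for some $\alpha'>0$ on every compact $\cK\subset\cD$, i.e.\ $\Psi^+\in\cC^{2s+}(\cD)$.

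For the negative eigenpair I would repeat this with $\tilde\cT$, which by Lemma~\ref{L2.1} has the same four properties and is moreover superadditive; Theorem~\ref{NKR} then gives $\hat\mu>0$ and $\hat y\in\dot\sP$ with $\tilde\cT\hat y=\hat\mu\hat y$, $\hat\mu$ being the unique such eigenvalue and the normalized eigenvector unique. Set $\Psi^-\df-\hat y$ and $\lambda^-\df 1/\hat\mu>0$; from $\tilde\cT\hat y=-\cT(-\hat y)=\hat\mu\hat y$ one has $\cT\Psi^-=\hat\mu\Psi^-$, hence $\bI\Psi^-+\lambda^-\Psi^-=0$ in $\cD$ and $\Psi^-=0$ in $\cD^c$, with $\Psi^-\le0$, $\Psi^-\not\equiv0$. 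Since $\sM^-_*\hat y=-\sM^+_*(-\hat y)\le-\bI(-\hat y)=-\lambda^-\hat y\le0$ in $\cD$, Theorem~\ref{T2.4} gives $\hat y>0$, i.e.\ $\Psi^-<0$, in $\cD$, and the interior regularity follows exactly as above.

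Next I would establish uniqueness and simplicity. Suppose $\phi\in\cC(\Rd)\cap L^1(\omega)$ satisfies $\bI\phi+\mu\phi=0$ in $\cD$, $\phi=0$ in $\cD^c$, $\phi\ge0$ and $\phi>0$ in $\cD$. If $\mu\le0$ then $\bI\phi=-\mu\phi\ge0=\bI 0$ in $\cD$ and $\phi=0$ in $\cD^c$, so the comparison theorem (Theorem~\ref{CS-Thm1}) yields $\phi\le0$ in $\Rd$, contradicting $\phi>0$ in $\cD$; hence $\mu>0$. By Theorems~\ref{RS-Thm1} and \ref{T2.6} applied to $\bI\phi=-\mu\phi\in\cC(\bar\cD)$, the function $\phi$ lies in $\cX$, so the uniqueness in Theorem~\ref{CS-Thm2} gives $\cT(\mu\phi)=\phi$, i.e.\ $\cT\phi=\mu^{-1}\phi$ with $\phi\in\dot\sP$; the uniqueness clause of Theorem~\ref{NKR} then forces $\mu^{-1}=\hat\lambda$, i.e.\ $\mu=\lambda^+$. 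The negative case is the same argument run through $\tilde\cT$. Geometric simplicity of $\lambda^+$ is Lemma~\ref{L2.3} for the eigenvalue $\hat\lambda$ of $\cT$, and that of $\lambda^-$ follows from the superadditivity clause of Theorem~\ref{NKR} for $\hat\mu$ (or, equally, by rerunning the proof of Lemma~\ref{L2.3} for $\tilde\cT$).

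I expect the genuine difficulties to be already packaged in Lemmas~\ref{L2.1}--\ref{L2.3}: the compactness of $\cT$ in the $\cX$-norm (which relies on the uniform up-to-the-boundary estimates of Theorems~\ref{RS-Thm1} and \ref{T2.6}), the strong positivity (which rests on Hopf's Lemma, Theorem~\ref{T2.4}), and the convexity bookkeeping behind superadditivity of $\tilde\cT$. Within the assembly itself the only delicate point is the uniqueness assertion, where one must (i) rule out non-positive eigenvalues by a comparison argument, and (ii) verify that a merely continuous positive eigenfunction is regular enough to lie in $\cX$, since Theorem~\ref{NKR}'s uniqueness only controls eigenvectors inside $\dot\sP\subset\cX$. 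The remainder is bookkeeping, translating between $\cT$, $\tilde\cT$ and $\bI$.
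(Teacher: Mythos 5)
Your proposal is correct and follows essentially the same route as the paper: the paper's proof of this theorem is precisely the combination of the nonlinear Krein--Rutman theorem (Theorem~\ref{NKR}) with Lemmas~\ref{L2.1}, \ref{L2.2} and \ref{L2.3}, translated back to $\bI$ through \eqref{E2.1}. Your additional bookkeeping (identifying $\lambda^{\pm}$ as reciprocals of the Krein--Rutman eigenvalues, Hopf's lemma for strict sign, interior $\cC^{2s+}$ regularity via Theorem~\ref{S-Thm1}, and the regularity step showing a continuous positive eigenfunction lies in $\cX$ before invoking uniqueness) is exactly what the paper leaves implicit.
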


\begin{theorem}\label{T3.3}
It holds that $\lambda^+=\widehat\Lambda^+= \Lambda^+$ and $\lambda^-=\widehat\Lambda^-= \Lambda^-$.
\end{theorem}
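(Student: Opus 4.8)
The plan is to prove the chain $\lambda^+\le\widehat\Lambda^+\le\Lambda^+\le\lambda^+$ (and the analogous one with $-$ in place of $+$), which forces the four numbers to coincide; the middle inequality is immediate since $\widehat\sF^+(\cD,\mu)\subseteq\sF^+(\cD,\mu)$ for every $\mu$. For $\lambda^+\le\widehat\Lambda^+$, observe that the eigenfunction $\Psi^+$ from Theorem~\ref{T3.2} lies in $\cC^{2s+}(\cD)\cap\cC_0(\cD)\subseteq\cC(\Rd)\cap\cC^{2s+}(\cD)\cap L^1(\omega)$, is positive in $\cD$, vanishes in $\cD^c$, and satisfies $\bI\Psi^++\lambda^+\Psi^+=0\le 0$ in $\cD$; hence $\Psi^+\in\widehat\sF^+(\cD,\lambda^+)$ and $\widehat\Lambda^+\ge\lambda^+$. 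The same argument with $\Psi^-$ gives $\widehat\Lambda^-\ge\lambda^-$.

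The substantive inequality is $\Lambda^+\le\lambda^+$. Since $\lambda^+>0$, it suffices to show $\sF^+(\cD,\mu)=\emptyset$ whenever $\mu>\lambda^+$, which I would establish by a sliding/Hopf argument. Fix $\psi\in\sF^+(\cD,\mu)$. As $\mu>0$ and $\psi\ge 0$, the supersolution inequality gives $\sM^-_*\psi\le\bI\psi\le-\mu\psi\le 0$ in $\cD$ (in the viscosity sense), so Hopf's lemma (Theorem~\ref{T2.4}) produces $c>0$ with $\psi\ge c\,\delta^s$ in $\cD$, while Theorem~\ref{T2.6} gives $\Psi^+\le C\,\delta^s$. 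Consequently $t^*\df\sup\{t\ge 0:\,t\Psi^+\le\psi\text{ in }\cD\}$ is a positive real number, and $w\df\psi-t^*\Psi^+\ge 0$ in $\Rd$ (on $\cD^c$ it equals $\psi$). Using the supremum structure of $\bI$ exactly as in the proof of Corollary~\ref{C2.1} (via \cite[Theorem~5.9]{CS09}) together with $\bI(t^*\Psi^+)=-\lambda^+t^*\Psi^+$,
\begin{equation*}
\sM^-_*w \;\le\; \bI\psi-\bI(t^*\Psi^+) \;\le\; -\mu\psi+\lambda^+t^*\Psi^+ \;=\; -\mu w+(\lambda^+-\mu)\,t^*\Psi^+ \quad\text{in }\cD,
\end{equation*}
and since $\mu>\lambda^+$, $\Psi^+\ge 0$ and $w\ge 0$, the right-hand side is $\le-\mu w\le 0$. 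If $w\equiv 0$ then $\psi=t^*\Psi^+$, so $\bI\psi=-\lambda^+\psi$ and $\bI\psi+\mu\psi\le 0$ forces $(\mu-\lambda^+)\psi\le 0$ in $\cD$, impossible since $\psi>0$. Otherwise Hopf's lemma applied to $w$ yields $w\ge\kappa\,\delta^s$ in $\cD$ for some $\kappa>0$, hence $w\ge(\kappa/C)\Psi^+$ and $\psi\ge(t^*+\kappa/C)\Psi^+$ in $\Rd$, contradicting the maximality of $t^*$. Therefore no such $\mu$ exists and $\Lambda^+\le\lambda^+$.

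The minus case is the mirror image: for $\mu>\lambda^-$ and $\psi\in\sF^-(\cD,\mu)$ one sets $t^*\df\sup\{t\ge 0:\,t\Psi^-\ge\psi\text{ in }\cD\}$ and $w\df t^*\Psi^--\psi\ge 0$; using $\bI\psi\ge-\mu\psi$, $\bI(t^*\Psi^-)=-\lambda^-t^*\Psi^-$ and $\Psi^-<0$ in $\cD$ one obtains $\sM^-_*w\le-\lambda^-t^*\Psi^-+\mu\psi=(\mu-\lambda^-)t^*\Psi^--\mu w\le-\mu w\le 0$ in $\cD$, and the same dichotomy gives a contradiction, so $\Lambda^-\le\lambda^-$; combined with $\lambda^-\le\widehat\Lambda^-\le\Lambda^-$ the equalities follow. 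I expect the main obstacle to be the bookkeeping that makes $t^*$ well defined — the matched boundary behaviour $\psi\ge c\,\delta^s$ and $\Psi^\pm$ comparable to $\delta^s$ near $\partial\cD$ — and the verification that Hopf's lemma (Theorem~\ref{T2.4}) and the comparison-type inequality $\sM^-_*(u-v)\le\bI u-\bI v$ are legitimately applicable in the viscosity sense to the merely continuous competitor $\psi$; once these are in hand, both dichotomies close routinely.
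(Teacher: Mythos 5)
Your proposal is correct and follows essentially the same route as the paper: the easy inclusions give $\lambda^+\le\widehat\Lambda^+\le\Lambda^+$, and the substantive inequality $\Lambda^+\le\lambda^+$ is obtained by the same sliding argument (critical multiple $t^*$ of $\Psi^+$, matched $\delta^s$ bounds from Theorem~\ref{T2.4} and \eqref{ET1.6A}, and the difference inequality via \cite[Theorem~5.9]{CS09}), the paper merely phrasing it as a contradiction from assuming $\lambda^+<\Lambda^+$ with an intermediate $\lambda$. The one caveat — since $w$ need not vanish on $\cD^c$, the strong maximum principle gives $w\equiv 0$ only in $\cD$, so in that branch one should conclude $\bI\psi\ge\bI(t^*\Psi^+)=-\lambda^+t^*\Psi^+$ in $\cD$ by monotonicity in the exterior data rather than equality — is a subtlety shared by the paper's own proof and which you already flag.
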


\begin{proof}
We  prove that $\lambda^+=\widehat\Lambda^+= \Lambda^+$ and the proof for the second one is analogous. It is clear that
$\lambda^+\leq \widehat\Lambda^+\leq \Lambda^+$. Suppose that $\lambda^+<\Lambda^+$. Then we could find $\lambda \in (\lambda^+, \Lambda^+)$ 
with 
$$\bI \psi + \lambda\psi\leq 0\quad \text{in}\; \cD, \quad \psi\geq 0, \quad \text{and}\quad \psi>0\; \text{in}\; \cD.$$
Since $\sM^-_*\psi\leq 0$ it follows from the proof of Theorem~\ref{T2.4} that for some $\kappa>0$ we have
\begin{equation}\label{ET2.3A}
\inf_{\cD} \tfrac{\psi}{\delta^s}\geq \kappa.
\end{equation}
Define $t_0=\sup\{t\in(0, \infty)\; :\; \psi-t\Psi^+>0 \; \text{in}\; \cD\}$. Since $\Psi^+>0$ in $\cD$ we have $t_0<\infty$.
From \eqref{ET1.6A} and \eqref{ET2.3A} it is also easily seen that $t_0$ is positive. We claim that
$\Phi=\psi-t_0\Psi^+$ vanishes at some point in $\cD$. If not, then we much have $\Phi>0$ in $\cD$. Again from \cite[Theorem~5.9]{CS09}
we note that
$$\sM^+_*(-\Phi)\geq -\lambda^+ t_0\Psi^+ + \lambda\psi=\lambda^+\Phi+ (\lambda-\lambda^+)\psi>0\quad \text{in}\; \cD,$$
and hence,
\begin{equation}\label{ET2.3B}
\sM^-_*(\Phi)\leq 0 \quad \text{in}\; \cD, \quad \text{and}\; \; \Phi\geq 0\; \text{in}\; \Rd.
\end{equation}
Again using Theorem~\ref{T2.4}  we find $\kappa_1>0$ satisfying
$$\Phi(x)\geq \kappa_1 \delta^s(x)\quad \text{for}\; x\in\cD.$$
Since $\Psi^+(x)\leq \kappa_2 \delta^s(x)$ for some $\kappa_2>0$, by \eqref{ET1.6A}, we obtain that
$$\Phi(x)> \frac{\kappa_1}{2\kappa_2}\Psi^+(x)\Rightarrow \psi-(t_0+\frac{\kappa_1}{2\kappa_2})\Psi^+>0 \quad \text{in}\; \cD.$$
This contradicts the definition of $t_0$. Thus we must have that for some $x_0\in\cD$, $\Phi(x_0)=0$. But then using Theorem~\ref{T2.4} and \eqref{ET2.3B} we obtain
$\psi=t_0\Psi^+$ in $\bar\cD$. Note that $0\geq \bI\psi + \lambda\psi\geq \bI\Psi^+ + \lambda\Psi^+=(\lambda-\lambda^+)\Psi^+$. But this is a
contradiction to that fact $\lambda>\lambda^+$. This proves that $\lambda^+=\widehat\Lambda^+= \Lambda^+$.
\end{proof}

\subsection{Proofs of Theorems~\ref{T1.1}-\ref{T1.8}}

\begin{proof}[Proof of Theorem~\ref{T1.1}] Proof follows from Theorem~\ref{T3.2} and ~\ref{T3.3}.\end{proof}

\begin{proof}[Proof of Theorem~\ref{T1.2}]
We consider \eqref{ET2.4B} first. Since $\bI u\geq -\Lambda^+\norm{u}_{L^\infty(\cD)}$ in $\cD$. Then using the upper barrier in Lemma~\ref{RS-Lem1} it is easily seen that
\begin{equation}\label{ET2.4C}
u(x)\leq \kappa_1\delta^s(x)\quad  \text{for}\; x\in\cD. 
\end{equation}
As before we again consider $t_0=\sup\{t\; :\; \Psi^+-t u>0\; \; \text{in}\; \cD\}$. Since $u(x_0)>0$ it follows that $t_0<\infty$. Again from
\eqref{ET2.4C} we obtain that $t_0\in(0, \infty)$. Thus from the arguments of Theorem~\ref{T3.3} we would find $x_1\in\cD$ satisfying
$(\Psi^+-t_0 u)(x_1)=0$.  Since $\sM^-_*(\Psi^+-t_0 u)\leq 0$ in $\cD$, it follows from the proof of Theorem~\ref{T2.4} that $\Psi^+=t_0 u$
in $\cD$ implying $u \in \cC^{2s+}(\cD)$. Again applying $\sM^-_*(\Psi^+-t_0 u)\leq 0$ we obtain that $\Psi^+=t_0 u$ in $\Rd$.

Now we consider \eqref{ET2.4A}. If $u^+>0$ in $\cD$ then the result follows from the second part. So assume that $u\leq 0$. This of course, implies $\sM^-_*(-u)\leq 0$ in $\cD$. Therefore,
by Theorem~\ref{T2.4} we have either $u=0$ in $\Rd$ or $u<0$ in $\cD$. We only need to consider the second case. By Theorem~\ref{T1.1} we obtain that $\Lambda^+=\Lambda^-$ and
$u=\kappa\Psi^-$ for some $\kappa>0$. From convexity of $\bI$ we find that
$$0=\bI(\Psi^- - \Psi^-)\leq \bI\Psi^- + \bI(-\Psi^-)=\Lambda^+(-\Psi^-) + \bI(-\Psi^-),$$
and thus, applying the first part of the proof we get $\kappa_1>0$ with $-\Psi^-=\kappa_1\Psi^+$. Hence $u=-\kappa\, \kappa_1\Psi^+$.
\end{proof}

\begin{proof}[Proof of Theorem~\ref{T1.4}]
(a) follows from the first part of the proof of Theorem~\ref{T1.2}. Note that $\bI\Psi^+ + \mu\Psi^+\leq 0$ in $\cD$.
If $u(x_0)>0$ for some $x_0\in\cD$, then the proof of Theorem~\ref{T1.2} gives that $u=t\Psi^+$ for some $t>0$. This would imply 
$\lambda=\lambda^+$ which contradicts our assertion. Thus we must have $u\leq 0$ in $\Rd$. Similarly, one can also prove (b).
\end{proof}

\begin{proof}[Proof of Theorem~\ref{T1.3}]
If $\mu<\Lambda^+$ then the claim follows from Theorem~\ref{T1.4}. Now suppose that $\mu\in(\Lambda^+, \Lambda^-)$. By Theorem~\ref{T1.4}(b) we get that
 $u\geq 0$ in $\Rd$.
In fact, by Theorem~\ref{T2.4}, we have $u>0$ in $\cD$, since $u\neq 0$. Thus by Theorem~\ref{T1.1} we have $u=t \Psi^+$ for $t>0$ and therefore, $\mu=\Lambda^+$. This is a contrdiction.
Therefore, $u$ must be $0$ for $\mu\in(\Lambda^+, \Lambda^-)$. Thus we consider the case $\mu\in (\Lambda^-, \Lambda^-+\varepsilon_0)$ and we show existence of a suitable $\varepsilon_0$.
On the contrary, suppose that no such $\varepsilon_0$ exists. Then we can find a sequence of $\mu_n\downarrow\Lambda^-$ and $u_n\neq 0$ satisfying
\begin{equation}\label{ET2.6B}
\begin{split}
\bI u_n + \mu_n u_n &=0\quad \text{in}\; \cD,
\\
u_n &= 0 \quad \text{in}\; \cD^c.
\end{split}
\end{equation}
We may normalize $u_n$ so that $\norm{u_n}_{L^\infty(\Rd)}=1$. Thus by Theorem~\ref{T2.6} we get that $\sup_n\norm{u_n}_{\cC^\alpha(\Rd)}<\infty$. Therefore, 
by Arzel\`{a}-Ascoli and a we get $u\in\cC(\Rd)$ with $\norm{u_n-u}_{L^\infty(\Rd)}\to 0$ as $n\to\infty$. Moreover, $\norm{u}=1$ and letting
$n\to\infty$ in \eqref{ET2.6B} we get
\begin{equation}\label{ET2.6C}
\begin{split}
\bI u + \lambda^- u &=0\quad \text{in}\; \cD,
\\
u &= 0 \quad \text{in}\; \cD^c.
\end{split}
\end{equation}
By Theorem~\ref{T1.2} and \eqref{ET2.6C} we obtain that $u=t\Psi^-$ for some $t\neq 0$. We also note that by Theorem~\ref{RS-Thm1} we can extract the subsequence above in such way that
\begin{equation}\label{ET2.6D}
\sup_{\bar\cD}\left|\tfrac{u_n}{\delta^s}- \tfrac{u}{\delta^s}\right|\to 0, \quad \text{as}\quad n\to\infty.
\end{equation}
Now suppose that $t>0$. Using \eqref{ET1.6A} we find $\kappa>0$ such that $-\Psi^-\geq \kappa \delta^s$ in $\cD$. Then using \eqref{ET2.6D}, we find that for all large $n$ 
$$u_n(x)\leq t\Psi^-(x) + \frac{t\kappa}{2} \delta^s(x)\leq - t\kappa \delta^s(x)  + \frac{t\kappa}{2} \delta^s(x)=- \frac{t\kappa}{2} \delta^s(x)<0.$$
But this is not possible, by Theorem~\ref{T1.1}. Now suppose that $t<0$. Then it must hold that $\Lambda^-=\Lambda^+$ and $-\Psi^-=\kappa_2\Psi^+$ for some $\kappa_2>0$.
A similar argument as above would give that $u_n>0$ in $\cD$ for all large $n$ which is again not possible. Then $t=0$ is the only choice and this is contradicting to
the fact that $\norm{u}_{L^\infty(\Rd)}=1$. Therefore, we can find $\varepsilon_0>0$ as claimed in the theorem.
\end{proof}

\begin{proof}[Proof of Theorem~\ref{T1.5}]
Suppose that $\Lambda=\Lambda^+$. As earlier, we consider $t_0=\sup\{t\; :\; u-t\Psi^+>0 \quad \text{in}\; \cD\}$. Then apply the arguments of Theorem~\ref{T1.2} to conclude that $u=t_0\Psi^+$.
Similar argument works for the case $\lambda=\Lambda^-$.
\end{proof}

\begin{proof}[Proof of Theorem~\ref{T1.6}]
We only prove $\Lambda^+(\cD_n)\to \Lambda^+(\cD)$ and the other proof is analogous. Denote by $\Lambda^+_n=\Lambda^+(\cD_n)$.
 The proof is divided in three steps.

{\bf Step 1.}\, Suppose that $\cD_n\Subset\cD_{n+1}\Subset \cD$ for all $n$. Then by Corollary~\ref{C2.2} we get $\Lambda^+(\cD_n)>\Lambda^+(\cD_{n+1})>\Lambda^+(\cD)$.
Let $\mu^+=\lim_{n\to\infty}\Lambda^+(\cD_n)$. We consider the solution $w$ of the following equation.
\[
\begin{split}
\bI w  &=-1\quad \text{in}\; \cD,
\\
w &= 0 \quad \text{in}\; \cD^c.
\end{split}
\]
The existence is guaranteed by Theorem~\ref{CS-Thm2}. Since $\bI(0)=0>-1$ by comparison principal we have $w\geq 0$.
Moreover, by Theorem~\ref{T2.4} we note that $w>0$ in $\cD$. Let $(\Psi^+_n, \Lambda^+_n)$ be the eigenpair
satisfying
\begin{equation}\label{ET2.9A}
\bI \Psi^+_n + \Lambda^+_n \Psi^+_n =0 \quad \text{in}\; \cD_n, \quad \text{and}\; \Psi^+=0\quad \text{in}\; \cD^c_n.
\end{equation}
After normalization we may assume that $\norm{\Psi^+_n}_{L^\infty(\Rd)}=1$. 
Then it follows from \eqref{ET2.9A} that $\bI \Psi^+_n\geq -\Lambda^+_n$ in $\cD_n$. Hence by comparison principle
(Theorem~\ref{CS-Thm1}) we obtain that
\begin{equation}\label{ET2.9B}
0\leq \Psi^+_n(x)\leq \Lambda^+_1 w(x) \quad \text{for all}\; x\in\Rd, \quad n\geq 1.
\end{equation}
We claim that there is a subsequence $\{n_k\}$ and $\varphi\in\cC(\Rd)$ satisfying
\begin{equation}\label{ET2.9C}
\sup_{x\in\Rd}\abs{\Psi^+_{n_k}(x)-\varphi(x)}\to 0 \quad \text{as}\quad n_k\to\infty.
\end{equation}
Fix any compact $K\subset\cD$. Then if we choose $n$ sufficiently large so that $K\Subset \cD_n$, from \cite[Theorem~4.2]{K13} (see also \eqref{ET1.6B}), we have
$\norm{\Psi^+_n}_{\cC^1(K)}<\kappa$, for some constant $\kappa$ that does not depend on $n$ for all $n$ large. Therefore, employing a usual diagonalization
process we can extract a subsequence $\{\Psi^+_{n_k}\}$ and a function $\varphi\in \cC(\cD)$ such that 
$$\Psi^+_{n_k}\to \varphi \quad \text{in}\; \cC_{\mathrm{loc}}(\cD), \quad \text{as}\; n_k\to\infty.$$
Next we show that this subsequence $\{n_k\}$ works for our claim \eqref{ET2.9C}. Since $0\leq \varphi\leq \Lambda^+_1 w$ in $\cD$, by \eqref{ET2.9B}, we extend 
$\varphi$ in $\Rd$ by setting its value $0$ on the complement of $\cD$ and we get $\varphi\in\cC(\Rd)$.
Pick $\varepsilon>0$. Choose $n_{k_0}$ large so that 
$$0\leq (\Psi^{+}_{n_k}+\varphi)(x)<\varepsilon \quad \cD^c_{n_{k_0}}, \quad \text{for}\; n_k\geq n_{k_0}.$$
This is possible due to \eqref{ET2.9B}. Therefore, using the local convergence of $\{\Psi^+_{n_k}\}$, we can find $n_{k_1}\geq n_{k_0}$ so that
$$\sup_{\Rd}\abs{\Psi^+_{n_k}-\varphi}=\sup_{\cD}\abs{\Psi^+_{n_k}-\varphi}< 2\varepsilon  \quad \text{for}\; n_k\geq n_{k_1}.$$
This proves the claim \eqref{ET2.9A}. This of course, implies $\norm{\varphi}_{L^\infty(\Rd)}=1$ and $\varphi\gneq 0$. 
Moreover, by the stability of viscosity solution \cite[Corollary~4.7]{CS09}
we have
\begin{equation}\label{ET2.9D}
\bI \varphi + \mu^+ \varphi =0 \quad \text{in}\; \cD, \quad \text{and}\; \varphi=0\quad \text{in}\; \cD^c.
\end{equation}
By \eqref{ET2.9D} and Theorem~\ref{T2.4} it follows that $\varphi>0$ in $\cD$. Then by Theorem~\ref{T1.1} we have $\mu^+=\Lambda^+(\cD)$.

{\bf Step 2.} \, Suppose that $\cD_n\Supset\cD_{n+1}\Supset \cD$ for all $n$.
As before, we consider the eigenpair $(\Psi^+_n, \lambda^+_n)$ with $\norm{\Psi^+_n}_\infty=1$. Let $\mu^+=\lim_{n\to\infty} \Lambda^+_n$ and existence of the limit
is assured by the monotonicity of  $\{\Lambda^+_n\}$.

Let $\cD_\varepsilon=\{x\in \Rd : \dist(x, \cD)<\varepsilon\}$. Since $\cD$ is smooth, we will have $\cD_\varepsilon$ smooth for $\varepsilon$
sufficiently small. Moreover, we can fix a $r_0, \varepsilon_1>0$ such a way that for every $\varepsilon<\varepsilon_1$, 
$\cD_\varepsilon$ satisfies uniform exterior sphere condition
with radius $r_0$. For every $\varepsilon$, let $w_\varepsilon, w_\varepsilon>0$ in $\cD_\varepsilon$, be the solution of
\[
\begin{split}
\bI w_\varepsilon  &=-1\quad \text{in}\; \cD_\varepsilon,
\\
w_\varepsilon &= 0 \quad \text{in}\; \cD^c_\varepsilon.
\end{split}
\]
Then it follows from that proof of \eqref{ET1.6A} that
\begin{equation}\label{ET2.9E}
w_\varepsilon\leq \kappa_2\, \delta^s_\varepsilon(x) \quad x\in\cD_\varepsilon, \quad \varepsilon<\varepsilon_1,
\end{equation}
for some constant $\kappa_2$, not depending on $\varepsilon$, where $\delta_\varepsilon$ denotes the distance function from the boundary of $\cD_\varepsilon$.
Now using a similar argument, as in step 1, we can find a subsequence ${\Psi_{n_k}}$ and $\varphi\in\cC(\cD)$ such that
$$\Psi^+_{n_k}\to \varphi \quad \text{in}\; \cC_{\mathrm{loc}}(\cD), \quad \text{as}\; n_k\to\infty.$$
Since for every fixed $\varepsilon$, we shall have $n_k$ large satisfying $\cD_{n_k}\Subset\cD_\varepsilon$ it follows from \eqref{ET2.9E} that 
$\varphi(x)\leq \Lambda^+(\cD) w_\varepsilon(x)\leq \Lambda^+(\cD)\kappa_2\, \delta^s_\varepsilon(x)$
for every $x\in \cD$. Hence if we set $\varphi=0$ in $\cD^c$ we still have $\varphi\in\cC(\Rd)$. Moreover, \eqref{ET2.9C} holds. Then the rest of argument
can be mimicked from step 1 to show $\mu^+=\Lambda^+(\cD)$.

{\bf Step 3.} Now we consider the case of when the domains are not necessarily ordered. Since $\cD_n\to\cD$ we can find two sequences of smooth domains
$\{\cD'_n\}$ and $\{\cD^{\prime\prime}_n\}$ such that $\cD'_n$ is increasing to $\cD$, $\cD^{\prime\prime}_n$ is decreasing to $\cD$ and 
$$ \cD'_n\Subset \cD_n\Subset\cD^{\prime\prime}_n \quad \text{for all}\; n\geq 1.$$
Note that $\Lambda^+(\cD'_n)> \Lambda^+(\cD_n)> \Lambda^+(\cD^{\prime\prime}_n)$, by Corollary~\ref{C2.2}. Then the proof follows from step 1 and step 2 above.
\end{proof}

\begin{proof}[Proof of Theorem~\ref{T1.7}]
The uniqueness of solution of \eqref{ET2.10A} follows from Corollary~\ref{C2.1}. We only need to show existence of solution. We use Schauder fixed point theory to do so.
For $\psi\in\cC_0(\cD)$, let $\varphi=\sG[\psi]\in\cC_0(\cD)$ be the unique viscosity solution of
$$\bI\varphi=f-\mu \psi \quad \text{in}\; \cD, \quad \text{and}\quad \varphi=0\quad \text{in}\; \cD^c.$$
That is,
\begin{equation}\label{ET2.10B}
\bI(\sG[\psi])=f-\mu \psi \quad \text{in}\; \cD, \quad \text{and}\quad \sG[\psi]=0\quad \text{in}\; \cD^c.
\end{equation}
Thus any fixed point of $\sG$ will be a solution to \eqref{ET2.10A}. We show the following
\begin{itemize}
\item[(a)] $\sG$ is continuous and compact.
\item[(b)] The set $\{\varphi\in \cC_0(\cD)\; :\; \varphi=\kappa\sG[\varphi]\; \text{for some }\, \kappa\in[0, 1]\}$ is bounded in $\cC_0(\cD)$.
\end{itemize}
Once we have shown (a) and (b), the existence follows from the Schauder fixed point theorem.

Suppose that $\{\psi_n\}$ be a bounded sequence in $\cC_0(\cD)$. Then using Theorem~\ref{T2.6} we get that $\{\sG[\psi_n]\}$ is a bounded sequence in 
$\cC^\alpha(\Rd)$. Thus, it is a compact sequence in $\cC_0(\cD)$. Therefore, $\sG$ is a compact operator. Continuity of $\sG$ follows from compactness,
stability property of viscosity solution and  the comparison principle (Theorem~\ref{CS-Thm1}).
This proves (a).

To prove (b) we consider the set
$$\cB=\{\varphi\in \cC_0(\cD)\; :\; \varphi=\kappa\sG[\varphi]\; \text{for some }\, \kappa\in[0, 1]\}.$$
From \eqref{ET2.10B} we see that for $\varphi\in\cB$ and $\kappa\in[0,1]$
\begin{equation}\label{ET2.10C}
\bI\varphi+\mu\kappa\, \varphi= \kappa f \quad \text{in}\; \cD, \quad \text{and}\quad \varphi=0\quad \text{in}\; \cD^c.
\end{equation}
Using Theorem~\ref{T1.6}, we can choose $\cD_1\Supset\cD$ such that $\Lambda^+(\cD_1)=\mu_1>\mu$. This is possible since $\mu<\Lambda^+$. Let 
$\Psi^+_1$ be the positive eigenfunction corresponding to $\mu_1$ i.e., 
\begin{equation}\label{ET2.10D}
\bI\Psi^+_1+\mu_1 \Psi^+_1= 0 \quad \text{in}\; \cD_1, \quad \text{and}\quad \Psi^+_1=0\quad \text{in}\; \cD^c_1.
\end{equation}
Since $\Psi^+_1>0$ in $\bar\cD$, we can multiply $\Psi^+$ with a constant, dependent only on $\norm{f}_\infty$, so that \eqref{ET2.10D} gives us
$$\bI\Psi^+_1+\mu\kappa\, \Psi^+_1\leq -\norm{f}_\infty \quad \text{in}\; \cD, \quad \text{and}\quad \Psi^+_1\geq 0\quad \text{in}\; \cD^c,$$
for all $\kappa\in[0, 1]$. Hence using \eqref{ET2.10C} and Corollary~\ref{C2.1} we obtain $\varphi\leq \Psi^+$. Since $\bI(\varphi)+\bI(-\varphi)\geq 0$, by convexity, it
is easily seen from \eqref{ET2.10C} that
$$\bI(-\varphi)+\mu\kappa\, (-\varphi)\geq - \kappa f \quad \text{in}\; \cD, \quad \text{and}\quad \varphi=0\quad \text{in}\; \cD^c.$$
Therefore, we can again apply Corollary~\ref{C2.1} to get $-\varphi\leq \Psi^+$. Combining we have 
$\norm{\varphi}_{L^\infty(\cD)}\leq \norm{\Psi^+}_{L^\infty(\Rd)}$ for all $\kappa\in(0, 1]$
and any $\varphi\in\cB$. This proves (b).
\end{proof}

\begin{proof}[Proof of Theorem~\ref{T1.8}]
To prove it by  contradiction we assume that there exists a sequence $(\mu_n, u_n)$ satisfying \eqref{ET2.7A}, and $\mu_n\downarrow \Lambda^-$ as $n\to\infty$
and $u_n\nless 0$ in $\cD$ for all $n$.
First we note that $\inf_n\norm{u_n}_\infty>0$, otherwise passing to the limit we would get a viscosity solution $0$ of the equation \eqref{ET2.7A}, and this would contradict the non-zero
property of $f$. Since $\bI u_n + \mu u_n\leq 0$, it is follows from Theorem~\ref{T1.1} that $u_n$ can not be non-negative in $\cD$. Therefore, $u_n$ must change sign in $\cD$. 

Suppose that $\sup\norm{u_n}_\infty<\infty$. Using Theorem~\ref{T2.6}, we can find a subsequence, denote by $u_n$, such that $\norm{u_n-u}_\infty\to 0$ as $n\to\infty$.
Moreover,
\begin{equation}\label{ET2.7B}
\begin{split}
\bI u + \Lambda^- u &=f\quad \text{in}\; \cD,
\\
u &= 0 \quad \text{in}\; \cD^c.
\end{split}
\end{equation}
If $u(x_0)<0$ for some $x_0\in\cD$, by Theorem~\ref{T1.2} and \eqref{ET2.7B}, we get that $u=t\Psi^-$ for some $t>0$. But this is not possible as $f\neq 0$. Thus we must have $u\gneq 0$.
Again by Hopf's lemma this would imply $u>0$ in $\cD$ and $\Lambda^+=\Lambda^-$. Then it follows fromTheorem~\ref{T1.5} that $u=t\Psi^+$ for some $t>0$. This would again contradict 
the fact that $f\neq 0$.

Thus it must hold that for some sub-sequenec, denoted by $\{u_n\}$, $\lim_{n\to\infty}\norm{u_n}_\infty=+\infty$. Define 
$\tilde{u}_n=\norm{u_n}^{-1}_\infty u_n$.Then using Theorem~\ref{RS-Thm1} and ~\ref{T2.6} we can extract a subsequence
$\{\tilde{u}_n\}$  satisfying
\begin{equation}\label{ET2.7D}
\sup_{x\in\bar\cD}\left|\tfrac{\tilde{u}_n(x)}{\delta^s(x)}-\tfrac{\tilde{u}(x)}{\delta^s(x)}\right|\to 0, \quad \text{as}\; n\to\infty.
\end{equation}
Moreover, by stability of viscosity solution we get
\begin{equation}\label{ET2.7C}
\begin{split}
\bI \tilde{u} + \Lambda^- \tilde{u} &=0\quad \text{in}\; \cD,
\\
\tilde{u} &= 0 \quad \text{in}\; \cD^c.
\end{split}
\end{equation}
Since $\tilde{u}\neq 0$, using \eqref{ET2.7C} and Theorem~\ref{T1.2} we either have $\tilde{u}=t\Psi^-$ for some positive $t$ or
$\Lambda^+=\Lambda^-$ and $\tilde{u}=t\Psi^+$ for some positive $t$. Hence using Theorem~\ref{T2.4} we can find $\kappa_3>0$ such
that $\delta^{-s}\tilde{u}\leq -\kappa_3$ in $\cD$, in the first case, or $\delta^{-s}\tilde{u}\geq \kappa_3$ in $\cD$, in the second case. Combining this
with \eqref{ET2.7D} we either get $u_n<0$ or $u_n>0$ in $\cD$ for all large $n$. Since $f\leq 0$, the second possibility would imply
 $$\bI u_n + \mu_n u_n \leq 0\quad \text{in}\; \cD, \quad \mu_n>\Lambda^-\geq\Lambda^+,$$
 which is not possible. Again $u_n<0$ in $\cD$ is contradicting to our hypotheses on the sequence.
This proves the theorem.
\end{proof}

Rest of the section is devoted to the proof of Theorem~\ref{T1.9}. We shall closely follow the approach of \cite{BEQ}. To do show we consider $\sM^+_*$ as a function
of $\lambda$ (see \eqref{Assmp1}). Fix $\Lambda>0$. Denote by $\sM^+_{\lambda}=\sM^+_*$ where the supremum is taken over all the kernel satisfying \eqref{Assmp1} for 
$0<\lambda\leq \Lambda$. Let $\Lambda^\pm(\lambda)$ be the principal eigenvalues of $\sM^+_{\lambda}$. Then it is easily seen that for $\lambda_1\leq \lambda_2\leq \Lambda$
we have $\Lambda^+(\lambda_1)\leq \Lambda^+(\lambda_2)$ and $\Lambda^-(\lambda_1)\geq \Lambda^-(\lambda_2)$. Let us now
establish Lemma~\ref{L3.4} below  . These lemmas should be compared with \cite[Lemma~2.3, 2.4, Prop.~2.3]{BEQ} which are done for
(local) elliptic Pucci's operator.

\begin{lemma}\label{L3.4}
The following hold.
\begin{itemize}
\item[(i)] The functions $\Lambda^+, \Lambda^-:(0, \Lambda]\to \RR_+$ are continuous.

\item[(ii)] For an interval $[a, b]\subset (0, \Lambda]$ there exists $\eta>0$ such that for all $\lambda\in[a, b]$ there is no solution to
\begin{equation}\label{EL3.4A}
\begin{split}
\sM^+_\lambda u + \mu u &=0\quad \text{in}\; \cD,
\\
u &= 0 \quad \text{in}\; \cD^c,
\end{split}
\end{equation}
for $\mu\in(\Lambda^-(\lambda), \Lambda^-(\lambda)+\eta]$.
\end{itemize}
\end{lemma}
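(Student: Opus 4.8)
The plan is to read both statements as parametrized analogues of results already established for a single operator: (i) is the analogue for the parameter $\lambda$ of the domain--continuity of Theorem~\ref{T1.6}, and (ii) is the version uniform in $\lambda\in[a,b]$ of the isolation statement of Theorem~\ref{T1.3}. Two structural facts make the parametrization harmless. First, for $\lambda$ in a compact subinterval $[a,b]\subset(0,\Lambda]$ every $\sM^+_\lambda$ is elliptic with respect to $\cL_*$ with the \emph{same} constants $a$ and $\Lambda$, so the a priori bounds of Theorems~\ref{T2.6}, \ref{RS-Thm1} and \ref{S-Thm1} (and the boundary estimate \eqref{ET1.6A}) hold with constants depending only on $d,s,a,\Lambda,\cD$, hence uniformly in $\lambda\in[a,b]$. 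Second, the family $\lambda\mapsto\sM^+_\lambda$ is continuous in the sense that for any $\phi$ that is $\cC^{1,1}$ near $x$ and lies in $L^1(\omega)$,
\[
0\ \le\ \sM^+_{\lambda_1}\phi(x)-\sM^+_{\lambda_2}\phi(x)\ \le\ (\lambda_2-\lambda_1)\int_{\RR^d}\babs{\phi(x+y)+\phi(x-y)-2\phi(x)}\,\frac{\D y}{\abs{y}^{d+2s}},\qquad \lambda_1\le\lambda_2\le\Lambda,
\]
because any kernel admissible for $\lambda_1$ may be replaced by its pointwise maximum with $\lambda_2$, which is admissible for $\lambda_2$ and differs from it by at most $\lambda_2-\lambda_1$. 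This modulus of continuity on smooth test functions is exactly what is needed to run the stability argument of \cite[Corollary~4.7]{CS09} for sequences of equations $\sM^+_{\lambda_n}u_n=g_n$ with $\lambda_n\to\lambda_*$ and $g_n\to g_*$ uniformly.

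For (i), the maps $\Lambda^\pm$ are already known to be monotone on $(0,\Lambda]$, so it suffices to exclude jumps. Fix $\lambda_0\in(0,\Lambda]$ and an arbitrary sequence $\lambda_n\to\lambda_0$; by monotonicity $\{\Lambda^+(\lambda_n)\}$ stays in a bounded interval. Let $(\Psi^+_n,\Lambda^+(\lambda_n))$ be the positive eigenpairs of $\sM^+_{\lambda_n}$, normalized so that $\norm{\Psi^+_n}_{L^\infty(\Rd)}=1$. Using the uniform estimates above, along a subsequence $\Psi^+_n\to\Psi$ uniformly on $\Rd$ with $\Psi\ge0$, $\norm{\Psi}_{L^\infty(\Rd)}=1$, $\Psi=0$ on $\cD^c$, and $\Lambda^+(\lambda_n)\to\bar\mu$. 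Passing to the limit via the stability observation gives $\sM^+_{\lambda_0}\Psi+\bar\mu\Psi=0$ in $\cD$; since $\sM^-_*\Psi\le-\bar\mu\Psi\le0$, Hopf's Lemma (Theorem~\ref{T2.4}) forces $\Psi>0$ in $\cD$, so $\Psi$ is a positive eigenfunction of $\sM^+_{\lambda_0}$ and Theorem~\ref{T1.1} gives $\bar\mu=\Lambda^+(\lambda_0)$. As every subsequence of $\{\Lambda^+(\lambda_n)\}$ has a sub-subsequence converging to $\Lambda^+(\lambda_0)$, the whole sequence converges; the same argument with negative normalized eigenfunctions handles $\Lambda^-$.

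For (ii), suppose the conclusion fails on some $[a,b]\subset(0,\Lambda]$. Then there are $\lambda_n\in[a,b]$, $\mu_n\in(\Lambda^-(\lambda_n),\Lambda^-(\lambda_n)+\tfrac1n]$ and $u_n\neq0$ solving \eqref{EL3.4A} with $\lambda=\lambda_n$, $\mu=\mu_n$. Pass to a subsequence with $\lambda_n\to\lambda_*\in[a,b]$; part (i) yields $\Lambda^-(\lambda_n)\to\Lambda^-(\lambda_*)$, hence $\mu_n\to\Lambda^-(\lambda_*)$. Normalize $\norm{u_n}_{L^\infty(\Rd)}=1$; since $\norm{\mu_n u_n}_{L^\infty(\cD)}$ is bounded, Theorems~\ref{T2.6} and \ref{RS-Thm1} (with uniform constants) let us pass to a further subsequence along which $u_n\to u_*$ uniformly on $\Rd$ and $u_n/\delta^s\to u_*/\delta^s$ uniformly on $\bar\cD$, where $\norm{u_*}_{L^\infty(\Rd)}=1$, $u_*=0$ on $\cD^c$, and, by the stability argument, $\sM^+_{\lambda_*}u_*+\Lambda^-(\lambda_*)u_*=0$ in $\cD$. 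By the $\Lambda^-$-version of Theorem~\ref{T1.2}, $u_*$ is a nonzero multiple of a principal eigenfunction of $\sM^+_{\lambda_*}$; in particular $u_*$ is sign-definite in $\cD$ and $\delta^{-s}\abs{u_*}$ is bounded below by a positive constant on $\bar\cD$ (Theorem~\ref{T2.4}). Uniform convergence of $u_n/\delta^s$ then forces $u_n$ to have a fixed sign in $\cD$ for all large $n$. If $u_n<0$ in $\cD$, then $u_n$ is a negative eigenfunction of $\sM^+_{\lambda_n}$ for the eigenvalue $\mu_n$, so $\mu_n=\Lambda^-(\lambda_n)$ by Theorem~\ref{T1.1}, contradicting $\mu_n>\Lambda^-(\lambda_n)$; if $u_n>0$ in $\cD$, then likewise $\mu_n=\Lambda^+(\lambda_n)\le\Lambda^-(\lambda_n)<\mu_n$, again a contradiction. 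This proves (ii).

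The step I expect to be most delicate is the passage to the limit in the eigenvalue equations when the operator itself varies with $n$, i.e.\ justifying that the viscosity sub/supersolution property survives the joint limit $\sM^+_{\lambda_n}\to\sM^+_{\lambda_*}$ and $u_n\to u_*$. The displayed bound on $\sM^+_{\lambda_1}-\sM^+_{\lambda_2}$ resolves this: evaluated on a fixed $\cC^{1,1}$ test function it contributes an error that tends to $0$, so the standard stability proof applies verbatim. The only other point needing attention, uniformity of all regularity constants over $\lambda\in[a,b]$, is immediate from the uniform ellipticity of $\{\sM^+_\lambda:\lambda\in[a,b]\}$ with respect to $\cL_*$.
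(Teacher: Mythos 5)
Your proof is correct and takes essentially the same route as the paper: for (i), normalize the positive eigenfunctions, use the regularity estimates with constants uniform in $\lambda\in[\hat\lambda,\Lambda]$ to extract a uniformly convergent subsequence, pass to the limit by stability of viscosity solutions, and identify the limit eigenvalue via Hopf's lemma and Theorem~\ref{T1.1}; for (ii), argue by contradiction with the normalized solutions, exactly mirroring the proof of Theorem~\ref{T1.3}. Your explicit estimate on $\sM^+_{\lambda_1}-\sM^+_{\lambda_2}$ just makes precise the operator-varying stability step that the paper leaves implicit.
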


\begin{proof}
We begin with the proof of (i). We only prove the claim for $\Lambda^+$, and the proof for $\Lambda^-$ is analogous. Let $\lambda_n\to \lambda_0\in(0, \Lambda]$. Choose
$\hat\lambda>0$ in such a way that $\{\lambda_n\}\subset[\hat\lambda, \Lambda]$ holds. Note that the class $\cL_n$ of all operators satisfying 
\eqref{Assmp1} with parameters $\lambda_n, \Lambda$ is actually a subclass of the operators satisfying \eqref{Assmp1} with parameter $\hat\lambda, \Lambda$.
Thus the stability constants appeared in the results of section~\ref{S-prelim} could be choosen independent of $n$ and dependent on $\hat\lambda, \Lambda$.
Let $(\Psi^+(\lambda_n), \Lambda^+(\lambda_n))$ be the positive principal eigenpair and $\norm{\Psi^+(\lambda_n)}_{L^\infty(\cD)}=1$. 
Using Theorem~\ref{T2.6} and the fact $\Lambda^+(\lambda_n)\leq \Lambda^+(\Lambda)$
we can extract a subsequence, denoted by $\{(\Psi^+(\lambda_n), \Lambda^+(\lambda_n))\}$, such that 
$$\Psi^+(\lambda_n)\to \Psi\quad \text{in}\; \cC_0(\Rd), \quad \Lambda^+(\lambda_n)\to \tilde\Lambda, \quad \text{as}\; n\to\infty.$$
From the stability of viscosity solution it is easily seen that 
$$\sM^+_{\lambda_0}\Psi+\tilde\Lambda \Psi=0, \quad \text{in}\; \cD, \quad \text{and}\quad \Psi=0\quad \text{in}\; \cD^c.$$
Moreover, $\Psi\gneq 0$. Using Theorem~\ref{T2.4} we note that $\Psi>0$ in $\cD$ and therefore, by Theorem~\ref{T1.1} we obtain $\tilde\Lambda=\Lambda^+(\lambda_0)$.
This establishes the continuity of $\Lambda^+$.

Now we show (ii). Suppose that the claim is not true. Then there exists a sequence $\{\lambda_n\}\subset [a, b]$, $\{\mu_n\}$ such that 
$\lambda_n\to\bar\lambda\in[a, b]$, 
$\mu_n>\Lambda^-(\lambda_n)$ and $(\mu_n-\Lambda^-(\lambda_n))\to 0$ as $n\to\infty$. Moreover, there exists $u_n\neq 0$ satisfying 
\begin{equation}\label{EL3.4B}
\begin{split}
\sM^+_{\lambda_n} u_n + \mu_n u_n &=0\quad \text{in}\; \cD,
\\
u_n &= 0 \quad \text{in}\; \cD^c.
\end{split}
\end{equation}
From (i) we also note that $(\mu_n-\Lambda^-(\bar\lambda))\to 0$, as $n\to\infty$. Using Theorem~\ref{RS-Thm1} we can extract a subsequence, denoted by $\{u_n\}$, satisfying
 $$\sup_{\cD}\left|\frac{u_n}{\delta^s}-\frac{u}{\delta^s}\right|\to 0, \quad \text{as}\; n\to\infty,$$
 for some $u\in\cC_0(\Rd)\setminus\{0\}$. Moreover, we also have from \eqref{EL3.4B} that
 \begin{equation*}
\begin{split}
\sM^+_{\bar\lambda} u + \Lambda^-(\bar\lambda) u &=0\quad \text{in}\; \cD,
\\
u &= 0 \quad \text{in}\; \cD^c.
\end{split}
\end{equation*}
Therefore, following the arguments of Theorem~\ref{T1.3} we get a contradiction. Hence the proof.
\end{proof}

Let $\cI^+_\lambda$ be the inverse operator of $-\sM^+_\lambda$ i.e.
$$-\sM^+_\lambda (\cI^+_\lambda \varphi)=\varphi\quad \text{for}\; \varphi\in\cC_0(\cD).$$
From Lemma~\ref{L2.1} we note that $\cI^+_\lambda$ is compact for every $\lambda\in (0,\Lambda]$. 
Define
$$\Lambda_2(\lambda)=\inf\{\mu \; :\; \text{there is a non-zero solution to}\, \eqref{EL3.4A}\}.$$
By $\sB(0, r)$ we denote the ball of radius $r$ around
the $0$ function in $\cC_0(\cD)$.

\begin{lemma}\label{L3.5}
Let $r, \lambda>0$ and $\mu\in\RR$. Then
\[
\Deg_{\cC_0(\cD)}(I-\mu\cI^+_\lambda, \sB(0, r), 0)=
\left\{\begin{array}{lll}
1 & \text{if}\; \mu<\Lambda^+(\lambda),
\\[2mm]
0 & \text{if}\; \Lambda^+(\lambda)<\mu<\Lambda^-(\lambda),
\\[2mm]
-1 & \text{if}\; \Lambda^-(\lambda)<\mu<\Lambda_2(\lambda).
\end{array}
\right.
\]
\end{lemma}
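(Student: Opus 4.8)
The plan is to carry out a Leray--Schauder degree computation one $\mu$-regime at a time, in the spirit of \cite[Prop.~2.3]{BEQ}. As preliminaries I would record the following. By Lemma~\ref{L2.1}, $\cI^+_\lambda$ is a compact, positively $1$-homogeneous operator on $\cC_0(\cD)$, so $I-\mu\cI^+_\lambda$ is positively $1$-homogeneous and $u\equiv0$ is one of its zeros for every $\mu$. A function $u$ is a zero of $I-\mu\cI^+_\lambda$ iff, setting $w=\cI^+_\lambda u$, one has $\sM^+_\lambda w+\mu w=0$ in $\cD$, $w=0$ in $\cD^c$; hence for $\mu$ in any of the three ranges the only zero is $u\equiv0$, by the $\sM^+_\lambda$-analogue of Theorem~\ref{T1.3} (valid since $\sM^+_\lambda$ is of the form \eqref{Opt}) together with the definition of $\Lambda_2(\lambda)$. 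Consequently the degree over $\sB(0,r)$ is well defined for each $r>0$, is independent of $r$ by excision and $1$-homogeneity, and $\inf_{\partial\sB(0,r)}\norm{u-\mu\cI^+_\lambda u}>0$ (otherwise a subsequence of near-zeros on the sphere would, by compactness of $\cI^+_\lambda$, converge to a nonzero zero). Finally, fixing $\psi_0\in\cC_0(\cD)$ with $\psi_0>0$ in $\cD$, a short computation using positive $1$-homogeneity of $\cI^+_\lambda$ shows that, \emph{when $\mu>0$}, the zeros $u$ of $I-\mu\cI^+_\lambda-s\psi_0$ (for $s\ge0$) are precisely the functions $u=v+s\psi_0$ with $v$ solving
\begin{equation}\label{Edegpert}
\sM^+_\lambda v+\mu v=-\mu s\psi_0\quad\text{in }\cD,\qquad v=0\quad\text{in }\cD^c.
\end{equation}

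For $\mu<\Lambda^+(\lambda)$ I would use the homotopy $H(t,u)=u-t\mu\cI^+_\lambda u$, $t\in[0,1]$. Since $t\mu\le\max\{\mu,0\}<\Lambda^+(\lambda)$ (recall $\Lambda^+(\lambda)>0$), the comparison principle Corollary~\ref{C2.1} (applied to $\sM^+_\lambda w+t\mu w=0$ with $w=\cI^+_\lambda u$, and the pair $w,0$) forces any zero of $H(t,\cdot)$ to vanish, so $H$ has no zero on $\partial\sB(0,r)$ and $\Deg(I-\mu\cI^+_\lambda,\sB(0,r),0)=\Deg(I,\sB(0,r),0)=1$. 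For $\Lambda^+(\lambda)<\mu<\Lambda^-(\lambda)$ I would first show there is $s_0>0$ with \eqref{Edegpert} unsolvable for $s\ge s_0$: if $v_s$ solved it with $s\to\infty$, then $\norm{v_s}_\infty\to\infty$ (the right side is $L^\infty$-unbounded), and normalizing $z_s=v_s/\norm{v_s}_\infty$, using the a priori estimates of Section~\ref{S-prelim} and stability of viscosity solutions, I would pass to a limit $z$ with $\norm{z}_\infty=1$ solving $\sM^+_\lambda z+\mu z=-\mu c\psi_0\le0$ in $\cD$, $z=0$ in $\cD^c$, for some $c\ge0$ (the possibility $s/\norm{v_s}_\infty\to\infty$ is excluded by rescaling $z_s$ by that ratio, whose limit would satisfy $-\mu\psi_0\equiv0$ in $\cD$, impossible); if $c>0$ then $z\ge0$ by Theorem~\ref{T1.4}(b) (here $\mu<\Lambda^-$) and $z>0$ in $\cD$ by Theorem~\ref{T2.4}, so $z\in\sF^+(\cD,\mu)$ and $\mu\le\Lambda^+(\lambda)$, a contradiction, while $c=0$ contradicts Theorem~\ref{T1.3}. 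The same blow-up shows solutions of \eqref{Edegpert} for $s\in[0,s_0]$ are bounded by some $R_0$; then on $\sB(0,R)$ with $R>\max\{R_0,r\}$ the homotopy $u-\mu\cI^+_\lambda u-s\psi_0$, $s\in[0,s_0]$, has no boundary zero, and since at $s=s_0$ it has no zero at all, excision gives $\Deg(I-\mu\cI^+_\lambda,\sB(0,r),0)=0$.

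For $\Lambda^-(\lambda)<\mu<\Lambda_2(\lambda)$, I would note first that since this open interval contains no eigenvalue, $\mu\mapsto\Deg(I-\mu\cI^+_\lambda,\sB(0,r),0)$ is constant on it, so it suffices to treat $\mu\in(\Lambda^-(\lambda),\Lambda^-(\lambda)+\eta)$ with $\eta>0$ small enough for the anti-maximum principle to apply (the $\sM^+_\lambda$-version of Theorem~\ref{T1.8}; cf.\ Lemma~\ref{L3.4}(ii)). For $t>0$ small, $\{\tau\psi_0:\tau\in[0,t]\}$ stays at positive distance from $(I-\mu\cI^+_\lambda)(\partial\sB(0,r))$, so $\Deg(I-\mu\cI^+_\lambda,\sB(0,r),0)=\Deg(I-\mu\cI^+_\lambda,\sB(0,r),t\psi_0)$; and by \eqref{Edegpert} (with $s=t$) the zeros of $I-\mu\cI^+_\lambda-t\psi_0$ correspond to solutions $v$ of $\sM^+_\lambda v+\mu v=-\mu t\psi_0\lneq0$, every one of which is $<0$ in $\cD$ by the anti-maximum principle, and satisfies $\norm{v}_\infty\le Ct$ by Theorem~\ref{T2.6}, so all the associated zeros lie in $\sB(0,r)$. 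I would then show that for $t$ small this $v$ is unique --- comparing two candidates by the scaling argument used in the proof of Theorem~\ref{T3.3}, with the Hopf bounds $\delta^{-s}(-v)\ge\kappa>0$ and $\delta^{-s}\abs{v}\le C$ --- and compute the local index at the corresponding zero of the positively $1$-homogeneous map $I-\mu\cI^+_\lambda$ to be $-1$, whence $\Deg(I-\mu\cI^+_\lambda,\sB(0,r),0)=-1$.

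The hard part will be this last step: identifying the index as $-1$ --- equivalently, showing the degree drops by exactly one as $\mu$ crosses the simple half-eigenvalue $\Lambda^-(\lambda)$ --- which requires combining the anti-maximum principle with the strict positivity/Hopf estimates and with the uniqueness of the negative solution of the perturbed problem for small $t$, exactly as in the local case treated in \cite{BEQ}. The a priori nonexistence bound for large perturbations in the middle regime is the other point requiring genuine work; the first regime is routine.
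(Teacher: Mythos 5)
Your first regime is handled correctly, and your middle regime can be made to work, but it is more roundabout than necessary and has an unjustified step: you assert $\norm{v_s}_\infty\to\infty$ with only a parenthetical remark, whereas it needs an argument (e.g.\ comparison of $v_s$ with $s\zeta$, where $\sM^+_\lambda\zeta=-\mu\psi_0$). In fact no blow-up is needed at all: for $\Lambda^+(\lambda)<\mu<\Lambda^-(\lambda)$, \emph{any} solution of $\sM^+_\lambda v+\mu v=-\mu s\psi_0\lneq 0$, $v=0$ in $\cD^c$, is nonnegative by Theorem~\ref{T1.4}(b), hence positive in $\cD$ by Theorem~\ref{T2.4}, hence admissible in the definition of $\Lambda^+(\lambda)$, forcing $\mu\leq\Lambda^+(\lambda)$ --- so the perturbed problem is unsolvable for every $s>0$ and the degree is $0$ directly. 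The paper reaches the same contradiction by a slightly different route: if the degree were nonzero, the equation $u-\mu\cI^+_\lambda u=h$ would be solvable for every small target $h$; choosing $h$ with $\sM^+_\lambda h=-\kappa_1$ yields $u$ with $\sM^+_\lambda u+\mu u\leq-\kappa_1$ in $\cD$, $u=0$ in $\cD^c$, and the maximum principle plus Hopf's lemma force $u>0$ in $\cD$, contradicting $\mu>\Lambda^+(\lambda)$.

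The genuine gap is the regime $\Lambda^-(\lambda)<\mu<\Lambda_2(\lambda)$. After reducing to $\mu$ slightly above $\Lambda^-(\lambda)$ and invoking the anti-maximum principle, you rely on (a) uniqueness of the negative solution of the perturbed problem for small $t$, which you do not prove and which is delicate since the paper's comparison results require $\mu<\Lambda^+$, and (b) the assertion that the local index at that zero is $-1$, for which you give no mechanism --- and which you yourself flag as ``the hard part''. Since $I-\mu\cI^+_\lambda$ is not differentiable, there is no linearization available, and computing the index at a single zero of a merely positively $1$-homogeneous compact perturbation is exactly as hard as the original degree computation; so the value $-1$ is simply not established. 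The route the paper intends (following \cite[Proposition~2.3]{BEQ}), and the reason Lemma~\ref{L3.4} is stated with continuity in $\lambda$ and a gap $\eta$ uniform over $\lambda\in[a,b]$, is a homotopy in the ellipticity parameter: deform $\lambda$ up to $\Lambda$ while moving $\mu$ so that it stays in the solution-free band $(\Lambda^-(\lambda),\Lambda^-(\lambda)+\eta]$; at the endpoint $\lambda=\Lambda$ the operator $\sM^+_\Lambda$ is \emph{linear} (a constant multiple of the fractional Laplacian), where the classical Leray--Schauder index formula for compact linear resolvents gives degree $-1$ for $\mu$ just above the simple principal eigenvalue. Your proposal never uses the family $\sM^+_\lambda$ or any linear endpoint, so the case yielding $-1$ remains unproved.
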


\begin{proof}
For $\mu<\Lambda^+(\lambda)$ and $\Lambda^-(\lambda)<\mu<\Lambda_2(\lambda)$ the proof follows from the invariance of the degree under homotopy
and Lemma~\ref{L3.4}. This can be done following the arguments in \cite[Proposition~2.3]{BEQ}. Now suppose $\Lambda^+(\lambda)<\mu<\Lambda^-(\lambda)$.
To prove by contradiction let us assume that $\Deg_{\cC_0(\cD)}(I-\mu\cI^+_\lambda, \sB(0, r), 0)\neq 0$. Thus we can find $\kappa>0$ such that for
any $f$ with $\norm{f}_{\cC_0(\cD)}\leq \kappa$ we must have $\Deg_{\cC_0(\cD)}(I-\mu\cI^+_\lambda, \sB(0, r), f)\neq 0$. Let $\kappa_1\in(0, \kappa)$
and $h$ satisfy
\begin{equation}\label{EL3.5A}
\sM^+_\lambda h =-\kappa_1 \quad \text{in}\; \cD, \quad \text{and}\quad h=0\quad \text{in}\; \cD^c.
\end{equation}
This is possible due to Theorem~\ref{CS-Thm2}. Using Theorem~\ref{T2.6}, we may choose $\kappa_1$ small enough so that $\norm{h}_{L^\infty(\cD)}<\kappa$.
Choose $u\neq 0$ such that
\begin{equation}\label{EL3.5B}
u-\mu\, \cI^+_\lambda u = h \quad \text{in}\; \cD, \quad \text{and}\quad h=0\quad \text{in}\; \cD^c.
\end{equation}
From \eqref{EL3.5A} and \eqref{EL3.5B} it is easily seen that
$$\sM^+_\lambda u + \mu u\leq -\kappa_1  \quad \text{in}\; \cD, \quad \text{and}\quad u=0\quad \text{in}\; \cD^c.$$
By Theorem~\ref{T1.4} we get that $u\geq 0$. By Hopf's lemma (Theorem~\ref{T2.4}) we obtain that $u>0$ in $\cD$.
Hence $\mu\leq \Lambda^+(\lambda)$ which is contradicting to the fact  $\Lambda^+(\lambda)<\mu<\Lambda^-(\lambda)$.
Hence the proof.
\end{proof}

\begin{proof}[Proof of Theorem~\ref{T1.9}]
Proof follows along the lines of \cite[Theorem~1.9]{BEQ} using Lemma~\ref{L3.4} and  \ref{L3.5}.
\end{proof}

\section*{Acknowledgements}
The research of Anup Biswas was supported in part by an INSPIRE faculty
fellowship and DST-SERB grant EMR/2016/004810.

%

\end{document}